\DeclareMathOperator*{\argminA}{arg\,min}
\algnewcommand{\Inputs}[1]{%
  \State \textbf{Inputs:}
  \Statex \hspace*{\algorithmicindent}\parbox[t]{.8\linewidth}{\raggedright #1}
}
\algnewcommand{\Initialize}[1]{%
  \State \textbf{Initialize:}
  \Statex \hspace*{\algorithmicindent}\parbox[t]{.8\linewidth}{\raggedright #1}
}
\newtheorem{definition}{Definition}
\newtheorem{theorem}{Theorem}[section]
\newtheorem{lemma}[theorem]{Lemma}
\newtheorem{corollary}[theorem]{Corollary}%[section]
\newtheorem{proposition}[theorem]{Proposition}%[section]
\title{Optimization and Learning With Nonlocal Calculus}
\author{%
  Sriram~Nagaraj\thanks{With the Federal Reserve Bank of Atlanta. Any views or opinions expressed in this article are those of the author alone and not necessarily those of the Federal Reserve Bank of Atlanta or the Federal Reserve System. This research did not receive any specific grant from funding agencies in the public, commercial, or not-for-profit sectors. Declarations of interest: none.}\\
  \texttt{sriram.nagaraj.atl@gmail.com} \\
}
\begin{document}

\maketitle

\begin{abstract}
Nonlocal models have recently had a major impact in nonlinear continuum mechanics and are used to describe physical systems/processes which cannot be accurately described by classical, calculus based ``local'' approaches. In part, this is due to their multiscale nature that enables aggregation of micro-level behavior to obtain a macro-level description of singular/irregular phenomena such as peridynamics, crack propagation, anomalous diffusion and transport phenomena. At the core of these models are \emph{nonlocal} differential operators, including nonlocal analogs of the gradient/Hessian. This paper initiates the use of such nonlocal operators in the context of optimization and learning. We define and analyze the convergence properties of nonlocal analogs of (stochastic) gradient descent and Newton's method on Euclidean spaces. Our results indicate that as the nonlocal interactions become less noticeable, the optima corresponding to nonlocal optimization converge to the ``usual'' optima. At the same time, we argue that nonlocal learning is possible in situations where standard calculus fails. As a stylized numerical example of this, we consider the problem of non-differentiable parameter estimation on a non-smooth translation manifold and show that our \emph{nonlocal} gradient descent recovers the unknown translation parameter from a non-differentiable objective function.

Keywords: Nonlocal operators, optimization, machine learning
\end{abstract}

\section{Introduction}\label{sec:intro}
Nonlocality is an important paradigm in understanding the behavior of a system $X$ undergoing spatio-temporal or structural changes, particularly when such changes are abrupt or singular. A system exhibits \emph{nonlocality} if the changes occurring at some point $x\in X$ are influenced by the interactions between the $x$ and its neighbors. In contrast, a \emph{local} phenomenon is one where changes at $x$ depend solely on $x$. In order to perform predictive analysis on nonlocal systems, an important step is to develop nonlocal analytic/computational tools i.e., \emph{nonlocal calculus} and corresponding nonlocal descriptive models.

A generic description of a \emph{nonlocal operator} is a mapping $F$ from some topological space $X$ into a set $Y$ such that the value of $F$ at a point $x \in X$ depends on not only $x$ but also points in some neighborhood $U(x)$. In contrast, for a \emph{local operator} $L$, its value at $x$ depends solely on $x\in X$. Some of these nonlocal analyses lend themselves to a multiscale viewpoint, where instead of a single nonlocal operator $F:X \rightarrow Y$, we have instead a family of such operators $F_n: X\rightarrow Y$ indexed by a scale parameter $n\in \mathbb{R}$. A common theme is the convergence of the  nonlocal operators $F_n$ to a local counterpart, under suitable assumptions. In other words, the scale parameter $n$ controls the nonlocality in such a way that, asymptotically, as $n \rightarrow\infty$, the nonlocal operators $F_n\rightarrow F^*$, where $F^*:X\rightarrow Y$ is a \emph{local} operator. The exact nature of convergence, i.e., the topology of the function spaces in question, naturally has an important role to play in such a situation.

The prototypical class of nonlocal operators is the class of integral operators. If $\Omega$ is an open subset of a Euclidean space, and $f:\Omega \rightarrow \mathbb{R}$ a suitable function, then we can consider the operator
\begin{equation}(Tf)(x) = \int_\Omega f(y)K(x,y)dy \end{equation}
for an appropriate kernel $K(\cdot,\cdot):\Omega \times \Omega \rightarrow \mathbb{R}$. A generic enough $K$ can capture the nonlocal interaction between points of $\Omega$, or said differently, the modeler has the flexibility of choosing the kernel $K$ generic enough to model the phenomena under investigation. We also can consider a sequence of such kernels $K_n$ which give rise to the integral operators \begin{equation}(T_nf)(x) = \int_\Omega f(y)K_n(x,y)dy .\end{equation} Assuming that the $K_n$ converge, in some sense, to a kernel $K$, we can conclude, with reasonable assumptions, the convergence of the corresponding operators $T_n$ to $T$ (see \cite{atkinson}). In case $K(x,y) = \delta(x-y)$, where $\delta(\cdot)$ is the Dirac mass at the origin, we have, at least formally, that the $T_n$ converge to the identity, i.e.,  $Tf(x) = f(x)$. This observation underpins most of our further discussions.

While classical, differential operator (gradient, Hessian etc.) based methods work well under benign conditions, they cannot be used with singular domains and/or functions, since partial derivatives with the required regularity do not exist. The nonlocal calculus we study here is based on integral operators. The integral operators of nonlocal models come equipped with so-called ``interaction kernels" and can be defined for irregular domains or non-differentiable functions. As such, they do not suffer from singularity issues to the extent of their differential counterparts (\cite{peri4,peri5,gunz2}) and have been applied in practical problems with great success. In addition to being applicable to singular problems, nonlocal operators can be shown to converge to their local counterparts under suitably natural conditions: nonlocal gradients (resp. Hessians) converge to the usual gradient (resp. Hessian) when the latter exists (see \cite{brezis,menspec,spechess}).

\subsection{Contributions}
Our primary contributions are the theory and algorithmic applications of nonlocal differential operators, mainly nonlocal first and second order (gradient and Hessian) operators to perform optimization tasks. Specifically, we develop nonlocal analogs of gradient (including stochastic) descent, as well as Newton's method. We establish convergence results for these methods that show how the optima obtained from nonlocal optimization converge to the standard gradient/Hessian based optima, as the nonlocal interaction effects become less pronounced. While our definition of nonlocal optimization methods is valid for irregular problems, our analysis will assume the required regularity of the objective functions in order to compare the local and nonlocal approaches. As a stylized numerical application of our nonlocal learning framework, we apply our nonlocal gradient descent procedure to the problem of parameter estimation on a non-differentiable ``translation" manifold as studied via multiscale smoothing in \cite{wakin1} and \cite{wakin2}. In our framework, this problem can be solved without resorting to a multiscale analysis of the manifold; instead, the nonlocal gradient operators we use can be viewed as providing the required smoothing. While we have emphasized the use of nonlocal operators to non-differentiable optimization as an alternative to traditional subgradient methods (\cite{shor}), the scope of the theory is much wider. Indeed, one may use nonlocal methods to model long range interactions (\cite{interaction}), phase transitions (\cite{phase}) etc.

%Nonlocal optimization techniques have several key features: first, as we have seen, they are applicable to nondifferentiable objective functions, second, by constructing appropriate kernels, penalty methods and/or regularization can be incorporated within this framework, and third, with the availability of fast adaptive integration techniques, the nonlocal optimization methods can be computationally tractable for a large class of problems. 

\subsection{Prior Work}
Nonlocal models have, in recent years, been fruitfully deployed in a wide variety of application areas: in image processing (\cite{osher}) nonlocal operators are used for total variation (TV) inpainting and also image denoising (\cite{nlm1,nlm2,nlm3,nlm4}) with the nonlocal means framework. Computational/engineering mechanics has profited greatly in recent years with a plethora of nonlocal models. Plasticity theory (\cite{plastic}) and fracture mechanics (\cite{fracture,creep}) are two prominent examples. Indeed, the field of peridynamics (see \cite{peri1,peri2,peri3}) is a prime consumer of nonlocal analysis. Nonlocal approaches have also featured in crowd dynamics \cite{crowd}, and cell biology (\cite{cells}). A nonlocal theory of vector calculus has been developed along with several numerical implementations (see \cite{gunz1,gunz3,dubook}). The nonlocal approach has also found its way into deep learning (\cite{dnll}). On the theoretical side, nonlocal and fractional differential operators have been studied in the context of nonlocal diffusion equations (\cite{diff} and references therein) as well as purely in the context of nonlocal gradient/Hessian operators (\cite{menspec, spechess, brezis, grad1, grad2}). The use of fractional calculus via Caputo derivatives and Riemann-Liouville fractional integrals for gradient descent (GD) and backpropagation has been considered in \cite{wei,pu,chen,sheng,wang}. Our work uses instead the related notion of nonlocal calculus from \cite{menspec,spechess,brezis,dubook} as its basis for developing not only a nonlocal (stochastic) GD, but also a nonlocal Newton's method, and is backed by rigorous convergence theory. In addition, our nonlocal stochastic GD approach interprets, in a very natural way, the nonlocal interaction kernel as a conditional probability density. The multiple definitions of fractional and nonlocal calculi are related (see for e.g. \cite{karni1}).

\subsection{Notation and Review}\label{sec:intro1.1}
In all that follows, $\Omega \subset \mathbb{R}^D$ will denote a simply connected bounded domain (i.e., a bounded, connected, simply connected, open subset of $\mathbb{R}^D$)  with a smooth, compact boundary $\partial \Omega$ where $D$ is a fixed positive integer. $\Omega$ will be endowed with the subspace topology of $\mathbb{R}^D$. We will consistently denote by $x,y,z$ points in $\Omega$. Subscripts of vectorial (resp. tensorial) quantities will denote the corresponding components. For e.g., $x_i$ or $A_{i,j}$ will denote the $i^\text{th}$ component of the vector $x$ or $(i,j)^{\text{th}}$ entry of matrix $A$ respectively for $i,j=1,\ldots,D$. For vectors $x,y\in \mathbb{R}^D$ the notation $x\otimes y \in \mathbb{R}^{D\times D}$ denotes the outer product of $x$ and $y$. The letters $i,j,k,n,m,N$ will denote positive integers while $t,s,p,\epsilon,\delta, M, K$ denote arbitrary real quantities. By $B_{\epsilon}(x)$, we mean an open ball centered at $x$ with radius $\epsilon > 0$. Furthermore, $u,v,f,g,\rho: \Omega \rightarrow \mathbb{R}$ will in general denote real-valued functions with domain $\Omega$. All integrals will be in the sense of Lebesgue, and a.e. means almost every or almost everywhere with respect to Lebesgue measure. Given an arbitrary collection of subsets $\mathcal{S}$ of $\mathbb{R}^D$, the $\sigma$-algebra generated by $\mathcal{S}$ will be denoted by $\sigma(\mathcal{S})$.

Given $u:\Omega \rightarrow \mathbb{R}$, we define the support $\text{supp}(u) \subset \Omega$ as \[
\text{supp}(u) = \overline{\{x \in \Omega: |u(x)| > 0 \text{ a.e.}\}},
\]
where the overbar denotes closure in $\mathbb{R}^D$. A \emph{multi-index} $\bar{n}=(n_1,\ldots,n_D)$ is an ordered $D$-tuple of non-negative integers $n_i, i=1,\ldots D$, and we let $|\bar{n}|:=\sum_{i=1}^{D}n_i$. For a suitably regular function $u$, we denote by \[\frac{\partial^{\bar{n}}u}{\partial x^{\bar{n}}}:=\frac{\partial^{n_1}}{\partial x^{n_1}}\frac{\partial^{n_2}}{\partial x^{n_2}}\ldots \frac{\partial^{n_D}u}{\partial x^{n_D}}.\] If we consider all multi-indices $\bar{n}$ with $|\bar{n}|=1$, we can identify $\frac{\partial^{\bar{n}}u}{\partial x^{\bar{n}}}$ with the usual gradient $\nabla u$ of $u$. In this case, we denote the $i^{\text{th}}$ component $\frac{\partial u}{\partial x_i}$ of $\nabla u$ as $u_{x_{i}}$ for $i=1,\ldots D$. Likewise, the Hessian of $u:=Hu$ at a point $x\in\Omega$ is the $D\times D$ matrix $(Hu(x))_{i,j} = \frac{\partial^2u}{\partial x_i \partial x_j}(x)$.
We shall use the following function spaces:
\begin{equation}
\begin{split}
L^p(\Omega,\mathbb{R}^{k})&=\{u:\Omega \rightarrow \mathbb{R}^k: \int_{\Omega}|u|^p < \infty \}, 1\le p < \infty\}, \\
L^{\infty}(\Omega)&=\{u:\Omega \rightarrow \mathbb{R}: \text{ess sup}(u) < \infty \} \text{ where ess sup}(u):=\text{ essential supremum of } u, \\
W^{m,p}(\Omega)&=\{u \in L^p(\Omega): \frac{\partial^{\bar{n}}u}{\partial x^{\bar{n}}} \in L^p(\Omega), 0\le|\bar{n}|\le m, \, 1\le p \le \infty\},\\
C^{m}(\Omega)&:= \{u:\Omega \rightarrow \mathbb{R}: u \text{ is $m$-times continuously differentiable}\},\\
C^{m}_{c}(\Omega)&:= \{u:\Omega \rightarrow \mathbb{R}: u \in C^{m}(\Omega) \text{ with compact support}\},\\
Lip(\Omega,M)&:= \{u:\Omega \rightarrow \mathbb{R}: u \text{ is Lipschitz continuous with constant $M$}.\}
\end{split}
\label{eq:A_fnspace}
\end{equation}
We let $L^p(\Omega,\mathbb{R}^{1}):=L^p(\Omega)$. Note that each of the above spaces $\mathcal{X}$ is a normed linear space. In particular $H^1(\Omega) := W^{1,2}(\Omega)$ is a Hilbert space with the inner product $(u,v)_{H^1(\Omega)}=(u,v)_{L^2(\Omega)}+(\nabla u,\nabla v)_{L^2(\Omega)}$. The subspace $H_{0}^{1}(\Omega)$ of $H^{1}(\Omega)$ consists of those $u\in H^{1}(\Omega)$ that vanish (i.e., have \emph{zero trace}) on the boundary $\partial \Omega$. Note that the space $C_{0}^{1}(\Omega)$ consisting of continuously differentiable functions vanishing on $\partial \Omega$ is dense in $H^{1}_{0}(\Omega)$ (see \cite{adams}). Next, $|\cdot|$ denotes the absolute value of real quantities, $\|\cdot\|$ will denote the Euclidean norm of $\mathbb{R}^D$ and $\|\cdot\|_{\mathcal{X}}$ will denote the norm in any of the function spaces $\mathcal{X}$ above.

Following \cite{menspec,brezis}, we will fix a sequence of radial functions $\rho_{n}:\mathbb{R}^D \rightarrow \mathbb{R}, n=1,\ldots $ which satisfy the following properties:
\begin{equation}
\begin{cases}
    \rho_n \geq 0,\\
     \int_{\mathbb{R}^D}\rho_n(x)dx =1, \\
      \lim_{n\rightarrow \infty}\int_{\|x\|>\delta}\rho_n(x)dx =0, \, \forall \delta > 0.\\
\end{cases}\label{eq:rho}
\end{equation}
Note that by \emph{radial} we mean that $\rho_{n}(x) = \rho_{n}(Ox)$ for any orthogonal matrix $O:\mathbb{R}^D\rightarrow \mathbb{R}^D$ with unit determinant. Alternatively, this means that $\rho_{n}(x) = \hat{\rho}_{n}(\|x\|)$ for some function $\hat{\rho}_{n}: \mathbb{R} \rightarrow \mathbb{R}$. We can specify (but will not insist) that the $\rho_{n}$ have compact support or even be of class $C^{\infty}_{c}(\Omega)$. Note that for our analysis of the nonlocal stochastic gradient descent method, we shall interpret the $\rho_{n}$ as a sequence of probability densities on $\mathbb{R}^D$ that approach the Dirac density $\delta_0$ centered at the origin. Prototypical examples of $\rho_{n}$ are given by sequences of Gaussian distributions with increasing variance (the non-compact support case) and the so-called bump functions with increasing height (the compact support case). In both these cases, we can show that the sequence of $\rho_n$ converge as Schwartz distributions to the Dirac density $\delta_0$. Details can be found in \cite{evans}. Finally, the set of all radial probability densities on $\mathbb{R}^D$, i.e., those $\rho:\mathbb{R}^D \rightarrow \mathbb{R}$ that satisfy the first two conditions of equation \ref{eq:rho} will be denoted by $\mathcal{P}$.
\subsection{Nonlocal Operators}
Let us now focus on the main objects of study in this paper, \emph{nonlocal} differential operators. We note that there is no unique notion of ``the" nonlocal gradient (resp. Hessian). Indeed, differing choices of kernels may give rise to differing notions of gradients (resp. Hessians).
\subsubsection{First Order Operators}
As done in \cite{menspec}, for any radial density $\rho\in \mathcal{P}$, we can consider the linear operators $\frac{\partial_{\rho}}{\partial_{\rho}x_i}$ for $i=1,\ldots, D$ defined as:\begin{equation}\frac{\partial_{\rho}u}{\partial_{\rho}x_i}(x):= D\lim_{\epsilon \rightarrow 0} \int_{\Omega\cap B_{\epsilon}^{c}(x)}\frac{u(x)-u(y)}{\|x-y\|}\frac{x_i-y_i}{\|x-y\|}\rho(x-y)dy,\label{eq:nlder1}\end{equation}
where the domain of $\frac{\partial_{\rho}}{\partial_{\rho}x_i}$ is the set of all functions for which the principle value integral on the right hand side of equation \ref{eq:nlder1} exists (see \cite{menspec}). 

Stacking the $\frac{\partial_{\rho}u}{\partial_{\rho}x_i}$ as a vector yields a \emph{nonlocal} gradient: \begin{equation}\nabla_{\rho}u(x):= D\lim_{\epsilon \rightarrow 0} \int_{\Omega\cap B_{\epsilon}^{c}(x)}\frac{u(x)-u(y)}{\|x-y\|}\frac{x-y}{\|x-y\|}\rho(x-y)dy,\label{eq:nlder2}\end{equation}
in other words $\frac{\partial_{\rho}u}{\partial_{\rho}x_i}(x) = (\nabla_{\rho}u(x))_i$ for $i=1,\ldots,D$. Purely formally, if we allow for $\rho = \delta_0$, the Dirac delta distribution centered at $0$, then the ``nonlocal" gradient defined above coincides (in the sense of distributions) to the usual ``local" gradient. However, we would then be operating in the highly irregular realm of distributions, far removed from the space of functions that are more in line with the applications in mind. Nevertheless, much of the analysis in this paper revolves considering the case of a ``well-behaved" $\rho$ approximating $\delta_0$ so that $\nabla_{\rho}u$ approximates the true gradient $\nabla u$.

If $u\in C^{1}(\bar{\Omega})$, or more generally, as \cite{menspec} note (see also \cite{brezis}), if $u\in W^{1,p}(\Omega)$, the map \[y \mapsto \frac{|u(y)-u(x)|}{\|x-y\|}\rho(x-y)\in L^1(\Omega)\] for a.e. $y\in \Omega$.
In these cases, \emph{the principle value integral in equation \ref{eq:nlder2} exists and coincides with the Lebesgue integral} of $\frac{u(x)-u(y)}{\|x-y\|}\frac{x-y}{\|x-y\|}\rho(x-y)$ over all of $\Omega$. Thus, if $u\in C^{1}(\bar{\Omega})$ (or $u\in W^{1,p}(\Omega)$), then we may drop the limit in equation \ref{eq:nlder2} and write:
\begin{equation}\nabla_{\rho}u(x):= D\int_{\Omega}\frac{u(x)-u(y)}{\|x-y\|}\frac{x-y}{\|x-y\|}\rho(x-y)dy.\end{equation}
At times, we may wish to evaluate the integral on a subset $\Omega^{*}\subset \Omega$. In this case, we write $\nabla_{\rho}^{\Omega^{*}}u(x)$ to denote the corresponding nonlocal gradient:
\begin{equation}\nabla_{\rho}^{\Omega^{*}}u(x):= D \int_{\Omega^{*}}\frac{u(x)-u(y)}{\|x-y\|}\frac{x-y}{\|x-y\|}\rho(x-y)dy.\label{eq:A_anlder3}\end{equation}
If $\Omega^{*}=\Omega$, we simply denote this as $\nabla_{\rho}u(x)$.

The nonlocal gradient $\nabla_\rho u$ defined via $\rho$ above exists for $u\in W^{1,p}$, which consists of functions that are only \emph{weakly} differentiable and for which the standard (strong) gradient $\nabla u$ does \emph{not} exist.

Specializing to the fixed sequence $\rho_n$ defined in equation \ref{eq:rho}, we write $\frac{\partial_{n}u}{\partial_{n}x_i}(x) :=\frac{\partial_{\rho_n}u}{\partial_{\rho_n}x_i}(x)$ or, more succinctly, as $u^{n}_{x_i}$, and similarly $\nabla_n u(x) :=\nabla_{\rho_n}u(x)$. We refer to the subscript $n$ as the ``scale of nonlocality". It is easy to verify that unlike the usual local gradient $\nabla(\cdot)$, the nonlocal $\nabla_n(\cdot)$ does not satisfy the product rule. The mode and topology of convergence of the sequence $\nabla_n u$ to $\nabla u$ as $n\rightarrow \infty$ has been investigated by many authors (see \cite{osher,gunz1,menspec})
\subsubsection{Second Order Operators}
Just as a first order nonlocal gradient operator has been defined earlier, we can define second order nonlocal differential operators as well. We consider \emph{nonlocal Hessians}. In this case, even for a fixed scale of nonlocality $n$ and associated density (kernel) $\rho_n(\cdot)$, one has several choices for defining a nonlocal Hessian. We list below four possibilities, and specify the $(i,j)^{\text{th}}$ entry of nonlocal Hessian matrices for $i,j=1,\ldots,D$. For the time being, we assume enough regularity for $u$ in order for the Hessians to make sense. We will specialize to $u\in C^{2}_{c}(\Omega)$ and $u\in W^{2,p}(\Omega)$ in section \ref{sec:A_ahessian}. Recall the notation $u^{n}_{x_i}(x) = \frac{\partial_{n}u}{\partial_{n}x_i}(x)$. 
\begin{equation}
\begin{cases}
    (H_{n,m}^{1})_{i,j}(x) &:=D \int_{\Omega}\frac{u^{n}_{x_i}(x)-u^{n}_{x_i}(y)}{\|x-y\|}\frac{x_j-y_j}{\|x-y\|}\rho_m(x-y)dy := \frac{\partial_m u^{n}_{x_i}}{\partial_m x_j}(x) := (u_{x_i}^{n})_{x_j}^{m},\\
(H_{n}^{2})_{i,j}(x) &:=D \int_{\Omega}\frac{u_{x_i}(x)-u_{x_i}(y)}{\|x-y\|}\frac{x_j-y_j}{\|x-y\|}\rho_n(x-y)dy := \frac{\partial_n u_{x_i}}{\partial_n x_j}(x) := (u_{x_i})_{x_j}^{n},\\
(H_{n}^{3})_{i,j}(x) &:=\frac{\partial u^{n}_{x_i}}{\partial x_j}(x):=(u_{x_i}^{n})_{x_j},\\
(H_{n}^{4})_{i,j}(x) &:=\frac{D(D+1)}{2}\int_{\mathbb{R}^D}\frac{u(x+h)-2u(x)+u(x-h))}{\|h\|^{2}}\frac{(h\otimes h-\frac{\|h\|^2}{D+2} I_D)_{i,j}}{\|h\|^2}\rho_{n}(h)dh.\\
\end{cases}\label{eq:A_a4hessians}
\end{equation}
Of the four Hessians above, $H_{n}^{4}$ is explicitly defined and analyzed in great detailed by \cite{spechess}. It is interesting to note that $H_{n}^{4}$ is a non-local analog of the usual second-order central difference approximation to the second derivative (\cite{fd}).
\subsection{Paper Organization}After this introductory section, we consider $1^{\text{st}}$ order nonlocal optimization in section \ref{sec:1storder}. section \ref{sec:numapp} deals with a stylized numerical example of the $1^{\text{st}}$ order theory. In section \ref{sec:A_ahessian}, we develop the second order nonlocal optimization theory and the nonlocal analog of Newton's method. We conclude in section \ref{sec:conc}. The proofs of all theoretical results are provided in the supplementary appendix section \ref{sec:A_aintro}.
%%%%%%%%%%%%%%1st ORDER%%%%%%%%%%%%%%%%%%%%%%%%%%%%%%%%%
\section{$1^{\text{st}}$ Order Nonlocal Theory and Methods}\label{sec:1storder}
We now analyze the $1^{\text{st}}$ order nonlocal optimization theory and associated numerical methods. Recall that $\nabla_n u$ exists for $u \in W^{1,p}$, even if the classical gradient $\nabla u$ is not defined and we also know from \cite{menspec} that if, in addition, $u\in C^1({\bar{\Omega}})$ (so that the usual gradient $\nabla u$ also exists), then we have that $\nabla_n u \rightarrow \nabla u$ locally uniformly. Thus, if $x^* \in \Omega$ is a local minimizer of $u$, then $\nabla u(x^*) = 0$ and we have that $\nabla_n u(x^*) \rightarrow \nabla u(x^*) = 0 $, from which we can immediately conclude the following.
\begin{proposition}Let $x^{*} \in \Omega$ be a local minimum of $u\in C^{1}(\bar{\Omega})$. Then given an $\epsilon>0$, there is an $N>0$ such that $\|\nabla_n u (x^*)\|<\epsilon$ for all $n>N$.
\label{thm:1ordersmall}
\end{proposition}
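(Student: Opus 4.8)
The plan is to combine the classical first-order optimality condition with the locally uniform convergence $\nabla_n u \to \nabla u$ recalled immediately before the statement (from \cite{menspec}). The statement is essentially a direct corollary of these two ingredients, so the ``proof'' is really just an assembly.

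First I would record the optimality condition. Since $\Omega$ is open and $x^{*}\in\Omega$ is a local minimizer of the differentiable function $u$, Fermat's rule applies: restricting $u$ to each coordinate line through $x^{*}$ gives a one-dimensional function with an interior local minimum, hence vanishing derivative, so $\tfrac{\partial u}{\partial x_i}(x^{*})=0$ for every $i=1,\dots,D$, i.e.\ $\nabla u(x^{*})=0$.

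Next I would invoke the convergence fact: for $u\in C^{1}(\bar\Omega)$ one has $\nabla_n u \to \nabla u$ locally uniformly on $\Omega$. Fixing any compact neighborhood $K\subset\Omega$ of $x^{*}$ (which exists because $\Omega$ is open) gives $\sup_{x\in K}\|\nabla_n u(x)-\nabla u(x)\|\to 0$; in fact only the pointwise statement at the single point $x^{*}$ is needed, namely $\nabla_n u(x^{*})\to\nabla u(x^{*})$ as $n\to\infty$. Combining the two ingredients, $\nabla_n u(x^{*})\to\nabla u(x^{*})=0$, so by the very definition of convergence of a sequence in $\mathbb{R}^{D}$, for every $\epsilon>0$ there is an $N>0$ with $\|\nabla_n u(x^{*})\|=\|\nabla_n u(x^{*})-\nabla u(x^{*})\|<\epsilon$ for all $n>N$, which is the claim.

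There is essentially no obstacle here; the only point requiring a little care is checking that the hypotheses of the cited convergence result hold, which they do since $u\in C^{1}(\bar\Omega)$ is assumed, and noting that pointwise convergence at $x^{*}$ (weaker than the available locally uniform convergence) suffices. If one preferred a self-contained argument instead of citing \cite{menspec}, one could estimate $\nabla_n u(x^{*})$ directly from its definition in equation \ref{eq:nlder2}: using $\nabla u(x^{*})=0$ write $u(x^{*})-u(y)=-\nabla u(x^{*})\cdot(y-x^{*})+o(\|y-x^{*}\|)=o(\|y-x^{*}\|)$ as $y\to x^{*}$, split the integral over $B_\delta(x^{*})$ and its complement, bound the inner piece by the modulus of continuity of $\nabla u$ and the outer piece using the third property in equation \ref{eq:rho}, and then let $n\to\infty$ followed by $\delta\to 0$; but invoking the stated convergence result is cleaner.
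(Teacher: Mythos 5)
Your argument is correct and is exactly the paper's: the proposition is stated as an immediate consequence of Fermat's rule ($\nabla u(x^{*})=0$ at an interior local minimum) together with the locally uniform convergence $\nabla_n u \rightarrow \nabla u$ from \cite{menspec}, which gives $\nabla_n u(x^{*})\rightarrow 0$ and hence the $\epsilon$–$N$ statement. The extra remarks (pointwise convergence suffices; a self-contained estimate from the definition is possible) are fine but not needed.
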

While we shall not be using the following result directly in the development of nonlocal optimization methods, it is nonetheless an important analog of the local case. $\nabla_{n}^{\Omega^{*}}u$ refers to the restriction of $\nabla_n u$ to a measurable subset $\Omega^{*} \subset \Omega$ (see supplementary appendix for more details).
%%%%%%%%%THM 1%%%%%%%%%%%%%%%%%%%
\begin{theorem}\label{thm:order1}Let $x^{*} \in \Omega$ be an a.e. local minimum of $u\in W^{1,p}(\Omega)$. For each $n\in \mathbb{N}$ there exists a measurable subset $\Omega^{*}=\Omega^{*}(n)\subset \Omega$ such that $\nabla_{n}^{\Omega^{*}}u(x^*)=0$.
\end{theorem}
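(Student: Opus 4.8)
The plan is to recast the conclusion as a statement about the range of a vector measure and then invoke Lyapunov's convexity theorem. Fix $n$. Since $x^{*}$ is an a.e.\ local minimum of $u$, there is an $r>0$ with $B_{r}(x^{*})\subset\Omega$ and $u(y)\ge u(x^{*})$ for a.e.\ $y\in B_{r}(x^{*})$. On $B_{r}(x^{*})$ the integrand defining $\nabla^{\Omega^{*}}_{n}u(x^{*})$ factors as $\frac{u(x^{*})-u(y)}{\|x^{*}-y\|}\frac{x^{*}-y}{\|x^{*}-y\|}\rho_{n}(x^{*}-y)=\mu_{n}(y)(y-x^{*})$, where $\mu_{n}(y):=\frac{u(y)-u(x^{*})}{\|x^{*}-y\|^{2}}\rho_{n}(x^{*}-y)\ge 0$ a.e.; moreover, since $|x_{i}-y_{i}|\le\|x-y\|$ and $y\mapsto\frac{|u(y)-u(x^{*})|}{\|x^{*}-y\|}\rho_{n}(x^{*}-y)$ lies in $L^{1}(\Omega)$ for $u\in W^{1,p}$ (the integrability fact recalled above), the vector field $g_{n}(y):=\mu_{n}(y)(y-x^{*})$ is in $L^{1}(B_{r}(x^{*}),\mathbb{R}^{D})$. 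Hence producing $\Omega^{*}$ with $\nabla^{\Omega^{*}}_{n}u(x^{*})=0$ amounts to producing a measurable $\Omega^{*}\subset B_{r}(x^{*})$ with $\int_{\Omega^{*}}g_{n}=0$.

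Consider the $\mathbb{R}^{D}$-valued measure $\nu(A):=\int_{A}g_{n}$ on measurable $A\subset B_{r}(x^{*})$. It has finite total variation and is absolutely continuous with respect to Lebesgue measure, hence non-atomic, so by Lyapunov's convexity theorem its range $\mathcal{R}=\{\nu(A):A\subset B_{r}(x^{*})\text{ measurable}\}$ is a convex, compact subset of $\mathbb{R}^{D}$; it contains $0=\nu(\emptyset)$, which already establishes the statement as written, but I would aim for an $\Omega^{*}$ of positive measure. If $\mu_{n}$ vanishes on a positive-measure set $Z\subset B_{r}(x^{*})$, take $\Omega^{*}=Z$ and we are done, so assume $\mu_{n}>0$ a.e.\ on $B_{r}(x^{*})$.

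In that case I claim $0$ lies in the interior of the range $\mathcal{R}_{0}$ of $\nu$ restricted to measurable subsets of the sub-ball $B_{r/2}(x^{*})$. Indeed $\mathcal{R}_{0}$ is convex and contains $0$, so were $0$ not interior there would be a unit vector $\omega$ with $\langle\nu(A),\omega\rangle\le 0$ for every $A\subset B_{r/2}(x^{*})$; but for the open half-ball $A=\{y\in B_{r/2}(x^{*}):\langle y-x^{*},\omega\rangle>0\}$ one has $\langle\nu(A),\omega\rangle=\int_{A}\mu_{n}(y)\langle y-x^{*},\omega\rangle\,dy>0$, since $\mu_{n}>0$ a.e.\ and the remaining factor is positive on a set of positive measure, a contradiction. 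Fix $\delta>0$ with $\overline{B_{\delta}(0)}\subset\mathcal{R}_{0}$. By absolute continuity of $A\mapsto\int_{A}|g_{n}|$, pick a positive-measure set $A_{2}\subset B_{r}(x^{*})\setminus B_{r/2}(x^{*})$ (e.g.\ a thin shell $B_{r/2+\eta}(x^{*})\setminus B_{r/2}(x^{*})$ with $\eta$ small) such that $|\nu(A_{2})|<\delta$. Since $-\nu(A_{2})\in\mathcal{R}_{0}$, there is a measurable $A_{1}\subset B_{r/2}(x^{*})$ with $\nu(A_{1})=-\nu(A_{2})$; then $\Omega^{*}:=A_{1}\cup A_{2}$ is a disjoint union with $|\Omega^{*}|\ge|A_{2}|>0$ and $\nabla^{\Omega^{*}}_{n}u(x^{*})=D\,\nu(\Omega^{*})=D\big(\nu(A_{1})+\nu(A_{2})\big)=0$.

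The one genuine obstacle is that the condition $\nabla^{\Omega^{*}}_{n}u(x^{*})=0$ is $D$-dimensional, so a scalar intermediate-value or half-space sweep argument cannot kill all components simultaneously; Lyapunov's convexity theorem (equivalently, a multi-parameter topological argument) is precisely what makes the vector-valued balancing work. The local-minimum hypothesis is used only to force the weight $\mu_{n}$ to be nonnegative --- without it the half-ball need not have strictly positive $\omega$-projection and $0$ may fail to be interior to the range (as happens, e.g., for affine $u$). The remaining ingredients --- non-atomicity of $\nu$, the absolute-continuity estimate, and the reduction to a ball around $x^{*}$ --- are routine.
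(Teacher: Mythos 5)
Your proof is correct, but it takes a genuinely different route from the paper's. The paper argues one coordinate at a time: it isolates the $i$-th component of the integrand, splits a small ball around $x^{*}$ into the half-balls $\{x_i\ge x^{*}_i\}$ and $\{x_i< x^{*}_i\}$ on which that component has a fixed sign (this is where a.e.\ minimality enters), and then uses a scalar intermediate-value property of non-atomic measures (via Sierpinski's theorem) to carve out a subset of the negative half-ball whose contribution exactly cancels that of the positive half-ball. You instead view $\nabla_{n}^{\Omega^{*}}u(x^{*})$ as the value of the $\mathbb{R}^{D}$-valued measure $\nu(A)=\int_{A}g_n$ and invoke Lyapunov's convexity theorem, using minimality only to make the weight $\mu_n$ nonnegative, which is what forces $0$ into the interior of the range over a sub-ball (your supporting-hyperplane/half-ball contradiction), and then you balance a thin outer shell against a Lyapunov-realized set inside. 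What your route buys is precisely the point you flag: a single set on which \emph{all} $D$ components vanish simultaneously, whereas the paper's scalar balancing, as written, only annihilates the chosen $i$-th component of $\nabla_{n}^{\Omega^{*}}u(x^{*})$ on its constructed set; Lyapunov's theorem is the natural vector-valued analogue of the paper's scalar lemma and closes that multi-dimensional gap. Your construction also delivers an $\Omega^{*}$ of positive measure, sidestepping the degenerate reading $\Omega^{*}=\emptyset$ which, as you correctly observe, satisfies the statement literally. What the paper's route buys is a more elementary scalar tool and a more explicit set (a half-ball containing $x^{*}$ plus a carved piece of its complement), at the price of being componentwise. Your handling of the degenerate case where $\mu_n$ vanishes on a positive-measure set, the integrability of $g_n$ from the $W^{1,p}$ fact quoted in the paper, and the absolute-continuity choice of the thin shell are all sound.
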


\subsection{Nonlocal Taylor's Theorem}
%%%%%%%%%Nonlocal Taylor's Theorem%%%%%%%%%%%%%%%%%%%
Consider first the standard affine approximant \[A(x_0,x):=u(x_0)+(x-x_0)^T\nabla u(x_0)\] to $u(x)\in C^1(\Omega)$ at $x_0$ with the corresponding remainder term $r(x_0,x)=u(x)-A(x_0,x).$ The nonlocal analog of the affine approximant is \[A_n(x_0,x):=u(x_0)+(x-x_0)^T\nabla_n u(x_0)\] and the corresponding nonlocal remainder term is $r_n(x_0,x)=u(x)-A_n(x_0,x).$
Standard calculus dictates that the remainder term $r(x_0,x)=o(\|x-x_0\|)$, and by uniqueness of the derivative, we can conclude that the nonlocal remainder $r_n(x_0,x)$ will in general not be $o(\|x-x_0\|)$. Indeed, the standard (local) affine approximant is asymptotically the unique best fit linear polynomial and hence, the nonlocal affine approximant must necessarily be sub-optimal. It is therefore important to understand how $r_n(x_0,x)$ relates to $r(x_0,x)$ as $n\rightarrow \infty$, and this is the objective of the next result.
\begin{proposition}\begin{itemize}
\item Let $u\in C_{c}^1(\Omega)$. Then $A_n(x_0,x)\rightarrow A(x_0,x)$, and, $r_n\rightarrow r$ uniformly as $n\rightarrow \infty$.
\item Let $u\in H_{0}^1(\Omega)$. Then $\|r_n(\cdot,\cdot)- r(\cdot,\cdot)\|_{L^2(\Omega\times \Omega)}\rightarrow 0$ as $n\rightarrow \infty$.
\end{itemize}
\label{thm:taylor1}
\end{proposition}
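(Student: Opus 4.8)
The plan is to reduce both statements to the convergence of $\nabla_n u$ to $\nabla u$. From the definitions, $A_n(x_0,x)-A(x_0,x)=(x-x_0)^T\big(\nabla_n u(x_0)-\nabla u(x_0)\big)$ and $r_n(x_0,x)-r(x_0,x)=-\big(A_n(x_0,x)-A(x_0,x)\big)$, so the Cauchy--Schwarz inequality together with the boundedness of $\Omega$ gives, for all $x_0,x\in\Omega$,
\[
\big|r_n(x_0,x)-r(x_0,x)\big|=\big|A_n(x_0,x)-A(x_0,x)\big|\le \mathrm{diam}(\Omega)\,\big\|\nabla_n u(x_0)-\nabla u(x_0)\big\|.
\]
Thus the first bullet will follow as soon as $\sup_{x_0\in\Omega}\|\nabla_n u(x_0)-\nabla u(x_0)\|\to 0$, and the second as soon as $\|\nabla_n u-\nabla u\|_{L^2(\Omega,\mathbb{R}^D)}\to 0$, since then $\|r_n-r\|_{L^2(\Omega\times\Omega)}^2\le \mathrm{diam}(\Omega)^2\,|\Omega|\,\|\nabla_n u-\nabla u\|_{L^2(\Omega,\mathbb{R}^D)}^2$.

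For the first bullet, with $u\in C^1_c(\Omega)$, I would split $\Omega$ according to proximity to $K:=\mathrm{supp}(u)$. Fix an open set $K'$ with $K\subset K'$ and $\overline{K'}$ a compact subset of $\Omega$. On the compact set $\overline{K'}$, the locally uniform convergence $\nabla_n u\to\nabla u$ recorded just before Proposition~\ref{thm:1ordersmall} (applicable since $u\in C^1(\bar\Omega)$, cf. \cite{menspec}) already yields uniform smallness. For $x_0\in\Omega\setminus K'$ one has $\nabla u(x_0)=0$ and $u(x_0)=0$, so the integrand in \eqref{eq:nlder2} defining $\nabla_n u(x_0)$ is supported on $\{y\in K\}$, where $\|x_0-y\|\ge d_0:=\mathrm{dist}(\overline\Omega\setminus K',K)>0$; hence
\[
\big\|\nabla_n u(x_0)\big\|\le \frac{D\,\|u\|_{L^{\infty}(\Omega)}}{d_0}\int_{\|z\|\ge d_0}\rho_n(z)\,dz,
\]
which tends to $0$ uniformly in $x_0$ by the third condition in \eqref{eq:rho}. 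Combining the two estimates gives $\sup_{x_0\in\Omega}\|\nabla_n u(x_0)-\nabla u(x_0)\|\to 0$, hence uniform convergence of $A_n\to A$ and $r_n\to r$ on $\Omega\times\Omega$.

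For the second bullet, with $u\in H^1_0(\Omega)$, I would argue by density. Given $\varepsilon>0$, choose $\phi\in C^1_c(\Omega)$ with $\|u-\phi\|_{H^1(\Omega)}<\varepsilon$ (possible since $C^1_c(\Omega)$ is dense in $H^1_0(\Omega)$), and write $\|\nabla_n u-\nabla u\|_{L^2}\le \|\nabla_n(u-\phi)\|_{L^2}+\|\nabla_n\phi-\nabla\phi\|_{L^2}+\|\nabla\phi-\nabla u\|_{L^2}$. The third term is $<\varepsilon$; the middle term tends to $0$ by the first bullet, since uniform convergence on the bounded set $\Omega$ implies $L^2$ convergence. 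The first term is controlled by a uniform bound $\|\nabla_n v\|_{L^2(\Omega)}\le C\|v\|_{H^1(\Omega)}$ with $C$ independent of $n$: extending $v$ by zero to $\mathbb{R}^D$, using $v(x)-v(y)=\int_0^1\nabla v\big(x-(1-t)(x-y)\big)\cdot(x-y)\,dt$, substituting $h=x-y$ in \eqref{eq:nlder2}, and applying Minkowski's integral inequality together with translation invariance of the $L^2$ norm and $\int_{\mathbb{R}^D}\rho_n=1$ bounds the ``interior'' part by $D\|\nabla v\|_{L^2}$, while the boundary term created by the extension is absorbed via Hardy's inequality on $H^1_0(\Omega)$. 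Sending $n\to\infty$ and then $\varepsilon\to 0$ gives $\|\nabla_n u-\nabla u\|_{L^2(\Omega,\mathbb{R}^D)}\to 0$. (Alternatively one can invoke directly the $L^p$-localization of nonlocal gradients established in \cite{menspec} and \cite{brezis}.)

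The step I expect to be the main obstacle is establishing the uniform-in-$n$ operator bound for $\nabla_n$ on $H^1_0(\Omega)$, together with the care needed when extending Sobolev functions by zero so that the translated gradients in the integral representation are meaningful; it is precisely the boundary contribution that forces the hypothesis $u\in H^1_0(\Omega)$ rather than merely $u\in H^1(\Omega)$. The remaining ingredients — the reduction above, the proximity split in the $C^1_c$ case, and the density argument — are routine.
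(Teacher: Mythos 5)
Your argument is correct and shares the paper's skeleton: the same Cauchy--Schwarz reduction $|r_n-r|=|A_n-A|\le \mathrm{diam}(\Omega)\,\|\nabla_n u(x_0)-\nabla u(x_0)\|$, uniform gradient convergence for the $C^1_c$ case, and a density argument for the $H^1_0$ case. The differences are in which ingredients you re-derive versus cite. For the first bullet the paper simply invokes Theorem 1.1 of \cite{menspec} (uniform convergence of $\nabla_n u\to\nabla u$ when $u\in C^1_c(\Omega)$), whereas you re-prove that fact by splitting $\Omega$ into a compact neighborhood of $\mathrm{supp}(u)$ (locally uniform convergence) and its complement (where $\|\nabla_n u\|\le D\|u\|_\infty d_0^{-1}\int_{\|z\|\ge d_0}\rho_n\to 0$); this is a valid, self-contained substitute for the citation. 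For the second bullet the decompositions genuinely differ: the paper works at the level of remainders, inserting a smooth $\tilde u$ and bounding $\|r-\tilde r\|+\|\tilde r-\tilde r_n\|+\|\tilde r_n-r_n\|$, and it controls the cross terms by applying the $L^2$-localization of \cite{menspec} to $u$ itself (which, as your parenthetical alternative notes, would in fact finish the proof immediately since $H^1_0\subset W^{1,2}$); you instead work at the level of gradients, $\|\nabla_n(u-\phi)\|+\|\nabla_n\phi-\nabla\phi\|+\|\nabla(\phi-u)\|$, which requires a uniform-in-$n$ bound $\|\nabla_n v\|_{L^2}\le C\|v\|_{H^1}$. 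Your sketch of that bound (zero extension, translation estimate plus Minkowski's integral inequality for the interior part, Hardy's inequality for the boundary term, using the smoothness of $\partial\Omega$) is workable, but note it is exactly Lemma 2.1 of \cite{menspec}, restated in the paper's appendix, so you could cite it rather than absorb the risk in the Hardy step. One small correction to your closing remark: the restriction to $H^1_0$ is not forced by the result itself --- the localization theorem of \cite{menspec} holds on all of $W^{1,p}(\Omega)$ --- it is only needed for the particular density/zero-extension route (and, in the paper, for density of $C^1_0$ in $H^1_0$).
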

\subsection{Nonlocal Gradient Descent}
We define the nonlocal gradient descent as follows. We first fix the scale of nonlocality $n$, and initialize $x^{0,n} \in \Omega$. We then perform the update:
\begin{equation}
   x^{k+1,n}=x^{k,n}-\alpha_{n}^{k}\nabla_n u(x^{k,n}).
\end{equation}
Although one can easily conceive of an adaptive procedure with $n$ varying with step $k$, we emphasize that for our analysis, the scale $n$ of nonlocality does not change between the steps of the descent. Henceforth, we denote by $\nabla u^k:=\nabla u(x^k)$ and $\nabla_n u^{k,n}:=\nabla_n u(x^{k,n})$. The step size $\alpha_{n}^{k}$ and initial point $x^{0,n}\in \Omega$ are chosen small enough to ensure that the iterates $x^{k,n}$ all remain in $\Omega$. We emphasize that nonlocal gradient descent is applicable so long as $\nabla_n u$ exists, in particular, even if $\nabla u$ does not exist.

\subsubsection{Suboptimal Stepsize}
Consider first the sub-optimal step size situation, i.e., we use the same step size for both the local and nonlocal descent methods. This stepsize does not have to be the one obtained by linesearch. We will later consider the more involved situation of a \emph{nonlocal stepsize}, or a stepsize that changes with $n$, the scale of nonlocality. We thus now consider the situation with $\alpha_{n}^{k}=\alpha^{k}$ and compare the two descent methods:
\begin{equation}
\begin{cases}
   x^{k+1}&=x^{k}-\alpha^{k}\nabla u^{k},\\
   x^{k+1,n}&=x^{k,n}-\alpha^{k}\nabla_n u^{k,n},\\
\end{cases}
\end{equation}
with the same initial condition $x^0=x^{0,n}$ for all $n$. Note that if $u\in Lip(\Omega,M)$, then we can conclude that the sequence of nonlocal iterates is bounded if the step sizes $\alpha^k$ are bounded, and decreasing fast enough:
\begin{proposition}If the step sizes $\alpha^k$ are chosen such that the sum $\sum_{m=1}^{K}\alpha^{m}<1$ for any $K>0$, then we can conclude that the sequence $\{x^{k,n}\}$ is bounded.
\label{thm:bdd}\end{proposition}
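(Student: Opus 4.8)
The plan is to obtain a uniform-in-$n$ bound on the nonlocal gradient from the Lipschitz hypothesis, unroll the descent recursion, and finish with the triangle inequality and the summability assumption on the step sizes.

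First I would establish the a priori estimate
\[
\|\nabla_n u(x)\| \le DM \qquad \text{for all } x\in\Omega \text{ and all } n.
\]
Since $\Omega$ is bounded and $u\in Lip(\Omega,M)$, we have $u\in W^{1,\infty}(\Omega)\subset W^{1,p}(\Omega)$, so by the discussion in the excerpt the principal value integral defining $\nabla_n u(x)$ coincides with the ordinary Lebesgue integral of $\frac{u(x)-u(y)}{\|x-y\|}\frac{x-y}{\|x-y\|}\rho_n(x-y)$ over $\Omega$. The vector $\frac{x-y}{\|x-y\|}$ is a unit vector and $|u(x)-u(y)|\le M\|x-y\|$, so the integrand has norm at most $M\rho_n(x-y)$; integrating over $\Omega$ and using $\int_{\mathbb{R}^D}\rho_n=1$ from \eqref{eq:rho} yields the claimed bound. (One may also avoid the principal-value/Lebesgue identity: the same estimate holds for each truncated integral over $\Omega\cap B_\epsilon^c(x)$ uniformly in $\epsilon$, hence in the limit.) The bound does not deteriorate with $n$ precisely because every $\rho_n$ is a probability density.

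Next I would unroll the iteration. Since, as assumed, the iterates $x^{k,n}$ remain in $\Omega$, each $\nabla_n u^{m,n}=\nabla_n u(x^{m,n})$ is defined, and telescoping $x^{m+1,n}=x^{m,n}-\alpha^m\nabla_n u^{m,n}$ gives
\[
x^{k,n}=x^{0,n}-\sum_{m=0}^{k-1}\alpha^m\nabla_n u^{m,n}.
\]
Taking norms and applying the estimate of the previous step,
\[
\|x^{k,n}\|\le\|x^{0,n}\|+DM\sum_{m=0}^{k-1}\alpha^m\le\|x^{0,n}\|+DM,
\]
where the last inequality uses the hypothesis that every partial sum of the step sizes is bounded by $1$. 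Because $x^{0,n}\in\Omega$ and $\Omega$ is bounded, $R:=\sup_n\|x^{0,n}\|<\infty$, so $\|x^{k,n}\|\le R+DM$ for all $k$ and all $n$; in particular $\{x^{k,n}\}$ is bounded, uniformly in both indices. (The same computation in fact gives the sharper $\|x^{k,n}-x^{0,n}\|\le DM$, which is what one uses to choose the step sizes so that the iterates actually remain in $\Omega$.)

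I do not expect a genuine obstacle. The only delicate points are (i) that $\nabla_n u$ is well defined along the trajectory and uniformly bounded in $n$ — both supplied by Lipschitz continuity together with $\int\rho_n=1$ — and (ii) the observation that the constant $1$ in the step-size condition plays no special role beyond providing a finite bound on $\sum_m\alpha^m$, so that any step-size schedule with uniformly bounded partial sums would yield the same conclusion.
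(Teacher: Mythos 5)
Your proposal is correct and follows essentially the same route as the paper: a uniform-in-$n$ bound $\|\nabla_n u\|\le DM$ from the Lipschitz hypothesis and $\int_{\mathbb{R}^D}\rho_n=1$, followed by unrolling the recursion and invoking the bounded partial sums of the step sizes. The only cosmetic difference is that you derive the gradient bound directly from the definition, whereas the paper cites Lemma 2.2 of \cite{menspec} for the estimate $\|\nabla_n u\|_{L^{\infty}(\Omega)}\le DM\|\rho_n\|_{L^{1}(\Omega)}$.
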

In particular, proposition \ref{thm:bdd} and the Heine-Borel Theorem imply that we are guaranteed (at least) a convergent subsequence of the sequence $\{x^{k,n}\}$. For the remainder of this subsection, we assume for purposes of our comparative analysis of the local and nonlocal gradient descent methods, that $u\in C^2(\Omega) \cap Lip(\Omega,M)$. Thus, the sequence of nonlocal iterates $\{x^{k,n}\}$ in this case are bounded, and hence, there is a compact $V\subset \Omega$ such that $x^{k,n} \in V$ for all $k$. We are interested in studying the quantity $\|x^{k}-x^{k,n}\|$ as $n$ increases. We have the following result.
\begin{theorem}
Let $u\in C^2(\Omega) \cap Lip(\Omega,M)$ and let $\epsilon > 0 $ be given. Assume that nonlocal gradient descent is initialized with $x^{0}$ for all $n$, and with the same sequence of step sizes $\alpha^k$ for all $n$ satisfying $\sum_{m=1}^{K}\alpha^{m}<1$ for any $K>0$. Then, for each $k$, there is an $N>0$ such that for all $n>N$, we have $\|x^{m}-x^{m,n}\| < \epsilon\,$ for $m=1,\ldots,k+1$.
\label{thm:suboptimalgd}\end{theorem}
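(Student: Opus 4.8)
The plan is to argue by induction on the step index and to control the discrepancy $e_m^{(n)}:=\|x^m-x^{m,n}\|$ through a discrete Gr\"onwall-type recursion. Three ingredients feed into this: (i) the uniform-in-$n$ bound $\|\nabla_n u\|_{L^\infty(\Omega)}\le DM$ coming from $u\in Lip(\Omega,M)$ (the same estimate that underlies Proposition \ref{thm:bdd}); (ii) the local uniform convergence $\nabla_n u\to\nabla u$ recalled at the start of this section; and (iii) a Lipschitz bound for $\nabla u$ on a compact set trapping all the iterates. The summability hypothesis $\sum_m\alpha^m<1$ will be exactly what prevents the amplification factor in the Gr\"onwall estimate from depending on $k$.

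\textbf{Step 1 (a common compact set and a Lipschitz constant for $\nabla u$).} Because $u\in Lip(\Omega,M)$, one has $\|\nabla_n u(x)\|\le D\int_\Omega \frac{|u(x)-u(y)|}{\|x-y\|}\rho_n(x-y)\,dy\le DM$ for every $x\in\Omega$ and every $n$, and $\|\nabla u(x)\|\le M$ wherever it exists. Hence, starting from the common $x^0$ and using $\sum_m\alpha^m<1$, every iterate satisfies $\|x^{m,n}\|\le\|x^0\|+DM$ and $\|x^m\|\le\|x^0\|+M$, uniformly in $n$ and $m$; together with the standing assumption that the iterates remain in $\Omega$, all the points $x^m,x^{m,n}$ with $m\le k+1$ and all $n$ lie in one compact $V\subset\Omega$. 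Since $u\in C^2(\Omega)$, $Hu$ is bounded on a compact neighborhood of $V$ inside $\Omega$; combining the resulting Lipschitz estimate for nearby points with the global bound $\|\nabla u\|\le M$ for far-apart ones shows $\nabla u$ is Lipschitz on $V$ with some constant $L$ independent of $n$.

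\textbf{Step 2 (the error recursion).} Subtracting the two update rules and inserting $\pm\nabla u(x^{m,n})$ gives
\[
x^{m+1}-x^{m+1,n}=(x^m-x^{m,n})-\alpha^m\bigl(\nabla u(x^m)-\nabla u(x^{m,n})\bigr)-\alpha^m\bigl(\nabla u(x^{m,n})-\nabla_n u(x^{m,n})\bigr).
\]
By Step 1 the first correction has norm $\le\alpha^m L\,e_m^{(n)}$. For the second, fix $\eta>0$; local uniform convergence on the compact $V$ provides an $N$ with $\sup_{x\in V}\|\nabla_n u(x)-\nabla u(x)\|<\eta$ for all $n>N$. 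Thus, for $n>N$,
\[
e_{m+1}^{(n)}\le(1+\alpha^m L)\,e_m^{(n)}+\alpha^m\eta,\qquad e_0^{(n)}=0.
\]
Unrolling, $e_m^{(n)}\le\eta\sum_{j}\alpha^{j}\prod_{l>j}(1+\alpha^l L)$; using $1+x\le e^x$ and $\sum_l\alpha^l\le1$ one gets $\prod_l(1+\alpha^l L)\le e^{L}$ and $\sum_j\alpha^j\le1$, hence $e_m^{(n)}\le\eta\,e^{L}$ for every $m\le k+1$.

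\textbf{Step 3 (conclusion) and the main obstacle.} Given the prescribed $\epsilon>0$, run the above with $\eta:=\epsilon e^{-L}/2$: the associated $N$ then forces $e_m^{(n)}\le\eta e^{L}<\epsilon$ for all $n>N$ and all $m=1,\dots,k+1$, which is the assertion. The only place that genuinely requires care is Step 1: producing a \emph{single} compact $V\subset\Omega$ that contains all iterates $x^{m,n}$ uniformly in $n$ (so that local uniform convergence of $\nabla_n u$ can actually be invoked) and on which $\nabla u$ carries a Lipschitz constant that is finite and $n$-independent. The uniform estimate $\|\nabla_n u\|_{L^\infty}\le DM$ makes this essentially automatic; once it is in hand, Steps 2 and 3 are a routine discrete Gr\"onwall argument.
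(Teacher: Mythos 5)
Your argument is correct, and it closes the estimate differently from the paper. The paper proves the statement by induction on $k$: it uses the same basic decomposition (insert $\nabla u(x^{m,n})$, bound one piece by uniform convergence of $\nabla_n u$ on the compact set $V$ and the other by the mean value theorem with $C=\|Hu\|_{L^\infty(V)}$), but at each inductive step it re-invokes the induction hypothesis at rescaled tolerances such as $\tfrac{\epsilon}{3C\alpha^K}$, so the resulting threshold $N$ depends on $k$ and on the step sizes $\alpha^0,\dots,\alpha^K$, and the summability $\sum_m\alpha^m<1$ enters only through Proposition \ref{thm:bdd} (boundedness of the iterates, hence the compact $V$). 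You instead package the same two error sources into the recursion $e^{(n)}_{m+1}\le(1+\alpha^mL)e^{(n)}_m+\alpha^m\eta$ and solve it by a discrete Gr\"onwall estimate, using $\sum_m\alpha^m<1$ a second time to cap the amplification factor by $e^{L}$ independently of $k$. This buys something the paper's proof does not state: a single $N$ (depending only on $\epsilon$, $L$ and $V$) that works simultaneously for all iterations $m$, i.e.\ uniformity in $k$, whereas the paper's $N=N(\alpha^0,\ldots,\alpha^K)$ grows with the horizon. The price is that you need an explicit $n$-independent Lipschitz constant $L$ for $\nabla u$ on $V$, which you obtain correctly by combining the Hessian bound on a compact neighborhood of $V$ with the crude bound $\|\nabla u\|\le M$ for well-separated points; the paper's mean-value step is the same ingredient in less explicit form. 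Your treatment of the compact set $V$ (boundedness of iterates via $\|\nabla_n u\|_{L^\infty}\le DM$ plus the standing assumption that iterates stay in $\Omega$) matches the paper's, so no gap there relative to the paper's own level of rigor.
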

Theorem \ref{thm:suboptimalgd} implies that, if the objective function admits a gradient, then for $n$ large enough, the iterates of nonlocal gradient descent get arbitrarily close to the iterates of the standard gradient descent. The important point to note is that while nonlocal gradient descent may be applicable in situations where the classical gradient is unavailable, we would not have a convergence analysis comparing the nonlocal gradient descent with the local analog. However, even in such cases, as we noted earlier, proposition \ref{thm:bdd} guarantees that we have a convergent subsequence of iterates of the nonlocal gradient descent method.

\subsubsection{Optimal Stepsize}
We now consider the case of optimal stepsize obtained by linesearch. We fix a maximal stepsize range of $I=[0,A]$ for some $A>0$. As with the suboptimal stepsize case, we assume that $u\in C^2(\Omega) \cap Lip(\Omega,M)$ so that the sequence of nonlocal iterates $x^{k,n}$ are bounded. Fix a compact $V\subset \Omega$ such that $x^{k,n} \in V$ for all $k$. In the classical gradient descent approach, exact linesearch for the optimal stepsize at iteration $k$ involves finding $\alpha^{k}=\text{argmin}_{\alpha\in I}\,u(x^k-\alpha\nabla u^k)$, i.e., we perform:
\begin{equation*}
   x^{k+1}=x^{k}-\alpha^{k}\nabla u^{k},
\end{equation*}
where 
\begin{equation*}
   \alpha^{k}=\text{argmin}_{\alpha\in I}\,u(x^k-\alpha\nabla u^k).
\end{equation*}
We adapt it now to the nonlocal case. The steps of nonlocal gradient descent with exact linesearch for the optimal stepsize takes the form:
\begin{equation}
   x^{k+1,n}=x^{k,n}-\alpha^{k,n}\nabla_n u^{k,n},
\end{equation}
where 
\begin{equation}
   \alpha^{k,n}=\text{argmin}_{\alpha\in I}\,u(x^{k,n}-\alpha\nabla_n u^{k,n}),
\end{equation}
and again we assume that $x^0=x^{0,n}$ for all $n$. Note that $A$ must be defined in such a way that $x^{k,n}\in \Omega$ for all $k$. Also, let us assume for simplicity that $\alpha^{k,n}$ is unique.
\begin{theorem}Let $u\in C^2(\Omega) \cap Lip(\Omega,M)$. Assume that the optimal stepsize version of nonlocal gradient descent is initialized with $x^{0}$ for all $n$. Then, for each $k$, there exists a subsequence $\{\alpha^{k,n_{m_1}(k)}\}$ of $\{\alpha^{k,n}\}_{n=1}^{\infty}$ such that $\alpha^{k,n_{m_1}(k)} \rightarrow \alpha^{k}$ as $n_{m_1}(k)\rightarrow \infty$ and a subsequence $\{x^{k,n_{m_2}(k)}\}$ of $\{x^{k,n}\}_{n=1}^{\infty}$ such that $x^{k,n_{m_2}(k)} \rightarrow x^{k}$ as $n_{m_2}(k)\rightarrow \infty$.\label{thm:optimalgd}\end{theorem}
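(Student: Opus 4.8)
The plan is to run an induction on the iteration index $k$, extracting nested subsequences along which both the stepsizes and the iterates converge to their local counterparts, and to pass to the limit in the exact-linesearch characterization of the optimal stepsize. The base case $k=0$ is trivial since $x^{0,n}=x^0$ for all $n$ by assumption. For the inductive step, suppose along some subsequence $n_j\to\infty$ we have $x^{k,n_j}\to x^k$ (this is the content of what we want for index $k$; for $k=0$ it holds along the full sequence). I would first show $\alpha^{k,n_j}$ has a convergent sub-subsequence whose limit is $\alpha^k$, and then show that the corresponding iterates $x^{k+1,n_j}$ converge to $x^{k+1}$ along a further sub-subsequence.

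For the stepsize convergence, note that $\alpha^{k,n_j}\in I=[0,A]$, a compact set, so by Bolzano--Weierstrass we may pass to a sub-subsequence (still denoted $n_j$) with $\alpha^{k,n_j}\to\bar\alpha$ for some $\bar\alpha\in I$. The key analytic input is that $\nabla_n u\to\nabla u$ locally uniformly on the compact set $V$ containing all iterates (from \cite{menspec}, as quoted in the excerpt), together with the continuity of $u$ and of $\nabla u$ (both guaranteed by $u\in C^2(\Omega)$). Hence $\nabla_{n_j} u^{k,n_j}=\nabla_{n_j}u(x^{k,n_j})\to\nabla u(x^k)=\nabla u^k$: one splits the difference as $\|\nabla_{n_j}u(x^{k,n_j})-\nabla u(x^k)\|\le\|\nabla_{n_j}u(x^{k,n_j})-\nabla u(x^{k,n_j})\|+\|\nabla u(x^{k,n_j})-\nabla u(x^k)\|$, the first term going to $0$ by local uniform convergence on $V$ and the second by continuity of $\nabla u$ and the inductive hypothesis $x^{k,n_j}\to x^k$. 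Consequently the functions $\alpha\mapsto u(x^{k,n_j}-\alpha\nabla_{n_j}u^{k,n_j})$ converge uniformly on the compact interval $I$ to $\alpha\mapsto u(x^k-\alpha\nabla u^k)$ (using uniform continuity of $u$ on a compact neighborhood of the relevant line segments). A standard epiconvergence / uniform-convergence argument for minimizers then forces any limit point of the argmins to be an argmin of the limit function; since we assumed $\alpha^k$ is the unique minimizer of $u(x^k-\alpha\nabla u^k)$ over $I$, we get $\bar\alpha=\alpha^k$. This gives the subsequence $\{\alpha^{k,n_{m_1}(k)}\}$ with $\alpha^{k,n_{m_1}(k)}\to\alpha^k$.

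Finally, along that same sub-subsequence, $x^{k+1,n_j}=x^{k,n_j}-\alpha^{k,n_j}\nabla_{n_j}u^{k,n_j}\to x^k-\alpha^k\nabla u^k=x^{k+1}$, simply because each factor converges; relabel to obtain $\{x^{k,n_{m_2}(k)}\}$ (one can take $m_2(k)$ to be the index set produced here, with $m_1$ a further refinement or vice versa, being careful to state that the two subsequences may differ — which is already allowed by the theorem's phrasing). This closes the induction. The main obstacle is the passage to the limit in the argmin: one must ensure the minimization is over the \emph{fixed} compact set $I$ (not an open set), so that the minimizers stay in a compact set and uniform convergence of the objectives suffices to conclude convergence of minimizers; the uniqueness assumption on $\alpha^k$ is what pins down the limit rather than merely giving ``a minimizer.'' A secondary technical point is justifying that all the line segments $\{x^{k,n}-\alpha\nabla_n u^{k,n}:\alpha\in I\}$ lie in a single compact subset of $\Omega$ on which $u$ is uniformly continuous — this follows from boundedness of the iterates in $V$, the local uniform bound on $\|\nabla_n u\|$ inherited from $\nabla u\in C(\bar V)$ via the local uniform convergence, and the choice of $A$ in the hypotheses.
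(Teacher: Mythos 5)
Your proposal is correct and follows essentially the same route as the paper: induction on $k$, uniform convergence of the linesearch objectives $\alpha\mapsto u(x^{k,n}-\alpha\nabla_n u^{k,n})$ on the compact interval $I$, a convergence-of-minimizers argument (the paper phrases it via $\Gamma$-convergence) combined with the uniqueness assumption to identify the limit stepsize, and then passage to the limit in the update to get the next iterate. Your minor streamlinings --- extracting the stepsize limit by Bolzano--Weierstrass, using continuity of $\nabla u$ instead of the paper's mean-value-theorem/Hessian bound, and passing to a single common sub-subsequence rather than the paper's diagonal argument with the $DM$ bound on $\|\nabla_n u\|$ --- are all sound and do not change the substance of the argument.
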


\subsection{Nonlocal Stochastic Gradient Descent}
In this section, we consider the nonlocal version of stochastic gradient descent (SGD). A generic descent algorithm with a proxy-gradient $g^k \in \mathbb{R}^D$:
\begin{equation}\label{eq:sgd2}
\begin{split}
x^{k+1}&=x^{k}-\alpha^k g^k.\\
\end{split}
\end{equation}
After $K$ iterations, we output the averaged vector $\bar{x}=\frac{1}{K}\sum_{k=1}^{K}x^{k}$. If the \emph{expected value} of $g^k$ equals the true gradient $\nabla u^k$, then, roughly, the difference between the \emph{expected value} of $u(\bar{x})$ and the true minimum $u(x^{*})$ approaches $0$ as the number of iterations approaches $\infty$. This assumes that we sample $g^k$ from a distribution such that $\mathbb{E}(g^k|x^{k}) = \nabla u(x^k)$, or, more generally, $\mathbb{E}(g^k|x^{k}) \in \partial u(x^k)$, where $\partial u(x^k)$ is the subdifferential set of $u$ at $x^k$ (see \cite{shai}).

As motivation for our \emph{nonlocal} stochastic gradient descent analysis, let $X,Y$ be $\Omega-$valued random variables, with a conditional density $p_{Y|X}(y|x)=\rho(x-y).$ Here, $\rho(\cdot)$ is a radial density in the class $\mathcal{P}$ defined in section \ref{sec:intro1.1}. Note that since we interpret $\rho(x-y)$ to be $p_{Y|X}(y|x)$, we can \emph{choose} a density $p_{X}(x)$ and arrive at a joint distribution $p_{X,Y}(x,y)=\rho(x-y)p_{X}(x)$, and upon integration, arrive at the marginal distribution $p_{Y}(y)$ of $Y$. Let us denote by \begin{equation}k_u(x,y):=\frac{u(x)-u(y)}{\|x-y\|}\frac{x-y}{\|x-y\|}.\end{equation}If we let $\mathbb{E}_{\rho}[\cdot]$ denote the expectation with respect to the density $\rho$, then it is evident that \begin{equation}\nabla_{\rho} u(x) = \int k_u(x,y)\rho(x-y)dy :=  \mathbb{E}_{\rho}[k_{u}(X,Y)|X=x],\end{equation}or if we interpret $\nabla_{\rho}u(X)$ as a random variable, we can write $\nabla_{\rho} u(X) = \mathbb{E}_{\rho}[k_{u}(X,Y)|X].$  We will set $g^k=k_u(x^k,y)$, where $y$ is sampled from $p_{Y}(y)$ and perform the standard SGD with the knowledge that $\mathbb{E}_{\rho}[g^k|x^k] = \nabla_{\rho}u(x^k)$. We can therefore consider a sequence of nonlocal SGD with kernels $\rho_n(\cdot)$, and as $n\rightarrow \infty$, we prove that the nonlocal optima converge to the usual SGD optima. In the following, we assume $u$ is a convex, Lipschitz function with Lipschitz constant $M$.
\begin{lemma}\label{thm:sgd1}
Assume $u$ is convex. Given $\epsilon>0$, there is an $N>0$, such that for all $n>N$, we have \[u(y)- u(x) \ge (y-x)^T\nabla_n u(x) -\epsilon.\]
\end{lemma}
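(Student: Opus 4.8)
The plan is to prove that, for $n$ large, $\nabla_n u(x)$ is an \emph{approximate subgradient} of $u$ at $x$, uniformly over a compact set containing the iterates of interest; once this is done the lemma is immediate, since $u(y)-u(x)\ge (y-x)^{T}p$ for every $p\in\partial u(x)$, $\Omega$ is bounded, and the single $\epsilon$ can absorb the (small) approximation error multiplied by $\mathrm{diam}(\Omega)$. More precisely, I would fix a compact $V\subset\Omega$ (the iterates remain in such a set by the boundedness results of Section~\ref{sec:1storder}; this is also implicitly the regime of the lemma) and show $\nabla_n u(x)\in\partial_{\epsilon/\mathrm{diam}(\Omega)}u(x)$ for all $x\in V$ and all $n>N(\epsilon)$, where $\partial_{\epsilon'}u(x)=\{g:\ u(y)\ge u(x)+g^{T}(y-x)-\epsilon'\ \forall y\in\Omega\}$ is the $\epsilon'$-subdifferential. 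Working with $\partial_{\epsilon'}u(x)$ rather than $\partial u(x)$ itself is the point: near the non-differentiability set of $u$ the set $\partial u(x)$ is small while $\nabla_n u(x)$ need \emph{not} be close to it, but convexity gives $\partial_{\epsilon'}u(x)\supseteq\partial u(x')$ for every $\|x'-x\|\le \epsilon'/(2M)$, so $\partial_{\epsilon'}u(x)$ is correspondingly large there — which is exactly what permits a uniform estimate — and it is robust under small perturbations of the base point ($\partial_{\epsilon'}u(x)+B_{\epsilon''}(0)\subseteq\partial_{\epsilon'+\epsilon''\,\mathrm{diam}(\Omega)}u(x)$).

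For the core estimate I would start from the pointwise identity $u(x)-u(x-w)=\int_{0}^{1}\nabla u(x-sw)^{T}w\,ds$, valid for a.e.\ $w$ since $u$ is Lipschitz (hence differentiable a.e.\ along a.e.\ line), substitute $w=x-z$ in the defining integral for $\nabla_n u(x)$, and split at a \emph{fixed} radius $r$ of order $\epsilon/M$: the part over $\|w\|>r$ has norm at most $D\,M\int_{\|w\|>r}\rho_n$, which $\to0$ by the third defining property of the $\rho_n$ in \eqref{eq:rho}, and — crucially — at a rate that does not depend on $x$. On $\|w\|\le r$ every gradient $\nabla u(x-sw)$ lies in $\partial u(x')$ for some $\|x'-x\|\le r$, hence in $\partial_{\epsilon/4}u(x)$; combining this lower/upper control with the exact convexity bound $\tfrac{u(x)-u(x-w)}{\|w\|}\le \min_{p\in\partial u(x)}p^{T}\hat w$ gives two‑sided control of the scalar factor $\tfrac{u(x)-u(x-w)}{\|w\|}$ entering $\nabla_n u(x)$. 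Integrating this bound and exploiting the radial symmetry of $\rho_n$ — which collapses the leading integral $D\int \big(\min_{p\in\partial u(x)}p^{T}\hat w\big)\,\hat w\,\rho_n(w)\,dw$ to a spherical average that one recognizes as the \emph{Steiner point} $s(\partial u(x))$ of the subdifferential, a classical element of $\partial u(x)$ — identifies the "principal part" of $\nabla_n u(x)$ with a genuine subgradient of $u$ at $x$, so $\nabla_n u(x)$ is pushed into a small neighborhood of $\partial_{\epsilon/2}u(x)$, and the robustness of $\partial_{\epsilon}u(x)$ finishes.

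The main obstacle I anticipate is precisely this uniformity in $x$ near the non-smooth locus of $u$: the naive comparison of $\nabla_n u(x)$ with $\partial u(x)$ degenerates as $x$ approaches a kink (the required scale of concentration of $\rho_n$ would then depend on $x$), and making the $\epsilon$-subdifferential argument genuinely uniform — in particular bounding the error term generated by the width of $\partial_{\epsilon/4}u(x)$ in the unfavourable half-space $\{(y-x)^{T}w<0\}$, where the convexity bound only yields a lower estimate — is the delicate step; I expect to need that $\partial_{\epsilon/4}u(x)$ and the convex hull of the gradients actually present in $B_{r}(x)$ differ by at most $O(\epsilon)$ in Hausdorff distance, independently of $x\in V$. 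I also note a cheap special case: if one additionally assumes $u\in C^{1}$ — which, for convex $u$, is equivalent to everywhere differentiability — the lemma is immediate, since $\nabla_n u\to\nabla u$ locally uniformly by the result of \cite{menspec} recalled at the start of Section~\ref{sec:1storder}, so on $V$ one has $\|\nabla_n u(x)-\nabla u(x)\|<\epsilon/\mathrm{diam}(\Omega)$ for $n$ large, whence $u(y)-u(x)\ge (y-x)^{T}\nabla u(x)\ge (y-x)^{T}\nabla_n u(x)-\epsilon$.
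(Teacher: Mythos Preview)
Your ``cheap special case'' at the very end \emph{is} the paper's proof. The paper tacitly assumes $u$ is differentiable (it writes $u(y)-u(x)\ge (y-x)^{T}\nabla u(x)$ from convexity), invokes Theorem~1.1 of \cite{menspec} to get $\nabla_n u(x)\to\nabla u(x)$, and then bounds $|(y-x)^{T}(\nabla u(x)-\nabla_n u(x))|<\epsilon$ for $n$ large --- precisely your last paragraph, modulo the cosmetic step of absorbing $\mathrm{diam}(\Omega)$ into $\epsilon$.

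Your main proposal, by contrast, aims at a strictly stronger statement: the lemma for a convex $M$-Lipschitz $u$ with no smoothness assumption, handled via the $\epsilon$-subdifferential, a near/far splitting of the kernel integral, and the identification of the leading spherical average with the Steiner point of $\partial u(x)$. This is a genuinely different and more ambitious route. What it would buy is exactly the non-$C^{1}$ regime that the paper's argument cannot reach; what it costs is the uniformity-near-kinks issue you yourself flag (the width of $\partial_{\epsilon/4}u(x)$ in the unfavourable half-space is not obviously controlled by $\epsilon$ alone, so the error term generated there needs a separate Hausdorff-distance argument that you sketch but do not carry out). For the paper's purposes none of this machinery is needed: the authors are content to work in the smooth setting and take the two-line route you record at the end.
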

Lemma \ref{thm:sgd1} is a motivation of the following (well-known, see \cite{eps1,eps2,eps3,eps4}) generalization of the notion of the subderivative set of a function. 
\begin{definition}Let $\epsilon>0$ be given. A vector $z\in \mathbb{R}^D$ is an $\epsilon-$subgradient of $u$ at $x$ if $u(y)-u(x)\ge (y-x)^T z-\epsilon$ for all $y\in \Omega$. The collection of all $\epsilon-$subgradients of $u$ at $x$ is called the $\epsilon-$subdifferential set of $u$ at $x$ and is denoted by $\partial_{\epsilon}u(x).$
\end{definition}
It is obvious from the definition of $\partial_{\epsilon}u(x)$ and Lemma \ref{thm:sgd1} that given an $\epsilon>0$, there is an $N>0$ such that for all $n>N$, we have that $\nabla_n u(x) \in \partial_{\epsilon}u(x),$ and in particular, $\nabla u(x) \in \partial_{\epsilon}u(x)$ for any $\epsilon>0.$ If we choose to run SGD with an $\epsilon-$subgradient $g^k$ in place of a usual subgradient, we call this an \emph{$\epsilon$-stochastic subgradient descent ($\epsilon$-SGD)} algorithm.

\begin{theorem}Let $u$ be a convex, $M-$Lipschitz function and let $x^{*}\in \arg\min_{x:\|x\|\le B}u(x).$ Let $\epsilon>0$ be given. If we run an $\epsilon-$ subgradient descent algorithm on $u$ for $K$ steps with $\alpha=\sqrt{\frac{B^2}{M^2 K}}$ then the output vector $\bar{x}$ satisfies \[u(\bar{x}) -u(x^{*})\le \frac{BM}{\sqrt{K}} + \epsilon.\] Furthermore, for every $\hat{\epsilon}>\epsilon$, to achieve $u(\bar{x})-u(x^*)\le \hat{\epsilon},$ it suffices to run the SGD algorithm for a number of iterations that satisfies \[K\ge \frac{B^2M^2}{(\hat{\epsilon}-\epsilon)^2}.\]\label{thm:sgdsgd1}
\end{theorem}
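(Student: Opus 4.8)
The plan is to run the classical telescoping argument for (stochastic) projected subgradient descent, the only modification being that the exact subgradient inequality is replaced by the $\epsilon$-subgradient inequality furnished by Lemma~\ref{thm:sgd1} and the ensuing definition; this leaves the structure of the proof untouched and merely adds an $\epsilon$ to the final bound. First I would record two preliminaries. (i) The proxy gradient $g^{k}$ used at step $k$ satisfies $\|g^{k}\|\le M$: in the nonlocal setting $g^{k}=k_{u}(x^{k},y^{k})$ and $\|k_{u}(x,y)\|=\bigl|\tfrac{u(x)-u(y)}{\|x-y\|}\bigr|\le M$ by $M$-Lipschitzness of $u$ (and in the purely deterministic reading any $g^{k}\in\partial_{\epsilon}u(x^{k})$ is likewise norm-bounded by $M$ for a convex Lipschitz $u$), so the second-moment term below is controlled with no extra hypothesis. (ii) By Lemma~\ref{thm:sgd1} there is a threshold $N$, depending only on $\epsilon$ and not on $k$, such that for $n>N$ one has $\nabla_{n}u(x)^{T}(x-x^{*})\ge u(x)-u(x^{*})-\epsilon$ at every iterate; in the stochastic case this reads $\mathbb{E}\bigl[(g^{k})^{T}(x^{k}-x^{*})\mid x^{k}\bigr]\ge u(x^{k})-u(x^{*})-\epsilon$. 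I also take the update to include the non-expansive Euclidean projection $\Pi_{B}$ onto the feasible ball $\{x:\|x\|\le B\}$ implicit in the constrained formulation, and the initialization at the origin so that $\|x^{1}-x^{*}\|\le B$.

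The core is the one-iteration estimate. With $x^{k+1}=\Pi_{B}\!\left(x^{k}-\alpha g^{k}\right)$ and $\Pi_{B}$ non-expansive,
\[
\|x^{k+1}-x^{*}\|^{2}\le\|x^{k}-x^{*}\|^{2}-2\alpha\,(g^{k})^{T}(x^{k}-x^{*})+\alpha^{2}\|g^{k}\|^{2}.
\]
Summing over $k=1,\dots,K$ telescopes the first two terms; taking conditional and then total expectations and inserting (i)--(ii) yields
\[
\sum_{k=1}^{K}\bigl(\mathbb{E}[u(x^{k})]-u(x^{*})\bigr)\le\frac{\|x^{1}-x^{*}\|^{2}}{2\alpha}+\frac{\alpha M^{2}K}{2}+K\epsilon\le\frac{B^{2}}{2\alpha}+\frac{\alpha M^{2}K}{2}+K\epsilon.
\]
Dividing by $K$ and using Jensen's inequality with convexity, $u(\bar{x})\le\frac1K\sum_{k=1}^{K}u(x^{k})$, gives $\mathbb{E}[u(\bar{x})]-u(x^{*})\le\frac{B^{2}}{2\alpha K}+\frac{\alpha M^{2}}{2}+\epsilon$.

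It remains to optimize over the stepsize: the right-hand side is minimized at $\alpha=\sqrt{B^{2}/(M^{2}K)}$, where the first two terms each equal $\tfrac{BM}{2\sqrt{K}}$, producing the advertised bound $u(\bar{x})-u(x^{*})\le\frac{BM}{\sqrt{K}}+\epsilon$. For the iteration-complexity claim, given $\hat{\epsilon}>\epsilon$ it suffices to force $\tfrac{BM}{\sqrt{K}}\le\hat{\epsilon}-\epsilon$, i.e.\ $K\ge B^{2}M^{2}/(\hat{\epsilon}-\epsilon)^{2}$, which is exactly the stated requirement.

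I do not expect a genuine obstacle; the proof is a standard telescoping computation, and the only care needed is bookkeeping. One must arrange the conditioning so that $\mathbb{E}[(g^{k})^{T}(x^{k}-x^{*})\mid x^{k}]=\nabla_{n}u(x^{k})^{T}(x^{k}-x^{*})$, ensure $n$ exceeds the iterate-independent threshold from Lemma~\ref{thm:sgd1} so the $\epsilon$-subgradient inequality holds along the whole trajectory, and read the conclusion as a bound on $\mathbb{E}[u(\bar{x})]$ in the stochastic case (it holds verbatim if the $g^{k}$ are taken to be deterministic $\epsilon$-subgradients). Everything else is routine.
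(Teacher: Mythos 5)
Your proposal is correct and, at its core, follows the same route as the paper: convexity/Jensen to pass from $u(\bar{x})$ to $\frac{1}{K}\sum_k u(x^k)$, the $\epsilon$-subgradient inequality $u(x^k)-u(x^*)\le (x^k-x^*)^Tg^k+\epsilon$ summed over the iterations, and finally the bound $\frac{1}{K}\sum_{k=1}^{K}(x^k-x^*)^Tg^k\le \frac{BM}{\sqrt{K}}$ with the prescribed $\alpha$. The only real difference is that the paper obtains this last bound by citing Lemma 14.1 of the Shalev-Shwartz--Ben-David text as a black box, whereas you re-derive it inline by the standard one-step expansion and telescoping; your version is therefore self-contained, at the cost of a little bookkeeping the paper avoids. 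Two small corrections to your setup, neither fatal: (i) the paper's Algorithm 1 contains no projection step, so the $\Pi_B$ you insert analyzes a (harmlessly) different iteration — but your chain of inequalities holds verbatim, indeed with equality in the expansion step, if $\Pi_B$ is simply deleted, so nothing is lost; (ii) your parenthetical claim that every $g^k\in\partial_{\epsilon}u(x^k)$ is automatically bounded by $M$ for an $M$-Lipschitz convex $u$ is not quite right on the bounded domain $\Omega$, where $\partial_\epsilon u(x)$ can contain vectors of norm slightly exceeding $M$ (by roughly $\epsilon/\mathrm{diam}(\Omega)$); this is immaterial here because the intended proxy gradients $g^k=k_u(x^k,y)$ satisfy $\|g^k\|\le M$ by Lipschitzness, and the paper makes the same implicit boundedness assumption through its appeal to Lemma 14.1 (and states it explicitly only in Theorem \ref{thm:sgdsgd2}).
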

We come now to the main result of this section. The result and proof are modeled after Theorem 14.8 of \cite{shai}.
\begin{algorithm}
 \caption{$\epsilon-$ Stochastic Subgradient Descent}
  \begin{algorithmic}[1]
   \Inputs{Scalar $\alpha, \epsilon>0$, integer $K>0$}
    \Initialize{\strut$x^{1} \gets 0$}
    \For{$k = 1, \ldots, K$}
      \State Choose $g^k$ at random from a distribution such that $\mathbb{E}[g^k|x^k]\in \partial_{\epsilon}u (x^k)$
      \State $x^{k+1} \gets x^{k}-\alpha g^k$
    \EndFor
    \State Output $\bar{x}=\frac{1}{K}\sum_{k=1}^{K}x^k$
  \end{algorithmic}
\end{algorithm}

\begin{theorem}\label{thm:sgdsgd2}
Let $B,M>0$. Let $u$ be a convex function and let $x^{*}\in \arg\min_{x:\|x\|\le B}u(x).$ Let $\epsilon>0$ be given. Assume that $\epsilon-SGD$ is run for $K$ iterations with $\alpha=\sqrt{\frac{B^2}{M^2K}}$. Assume also that for all $k$, we have $\|g^k\|\le M$ with probability $1$. Then, \[\mathbb{E}[u(\bar{x})]-u(x^{*})\le \frac{BM}{\sqrt{K}} + \epsilon.\] Thus, running $\epsilon-SGD$ for $K$ iterations with $K\ge \frac{B^2M^2}{(\hat{\epsilon}-\epsilon)^2}$ ensures that $\mathbb{E}[u(\bar{x})]-u(x^{*})\le \hat{\epsilon}.$
\end{theorem}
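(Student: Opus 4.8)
The plan is to adapt the classical unprojected SGD regret analysis (as in Theorem~14.8 of \cite{shai}) to the $\epsilon$-subgradient setting, replacing the exact subgradient inequality by the $\epsilon$-subgradient inequality supplied by the definition of $\partial_{\epsilon}u$. Write $v^k := \mathbb{E}[g^k \mid x^k]$, which by hypothesis lies in $\partial_{\epsilon}u(x^k)$; note that $x^k$ is a random variable measurable with respect to $g^1,\dots,g^{k-1}$, so $v^k$ is random as well. Applying the defining inequality of $\partial_{\epsilon}u(x^k)$ at $y=x^{*}$ gives, pointwise, $u(x^k)-u(x^{*}) \le \langle x^k - x^{*}, v^k\rangle + \epsilon$, hence $\mathbb{E}[u(x^k)]-u(x^{*}) \le \mathbb{E}[\langle x^k - x^{*}, v^k\rangle] + \epsilon$.

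Next I would carry out the standard telescoping argument via the squared distance to $x^{*}$. From the update $x^{k+1}=x^k-\alpha g^k$ one has the algebraic identity $\langle x^k-x^{*},g^k\rangle = \frac{1}{2\alpha}\bigl(\|x^k-x^{*}\|^2 - \|x^{k+1}-x^{*}\|^2\bigr) + \frac{\alpha}{2}\|g^k\|^2$. Summing over $k=1,\dots,K$, the first term telescopes; using $x^1=0$ so that $\|x^1-x^{*}\|^2=\|x^{*}\|^2\le B^2$, discarding the nonnegative terminal term, and invoking $\|g^k\|\le M$ almost surely, we obtain $\sum_{k=1}^{K}\langle x^k-x^{*},g^k\rangle \le \frac{B^2}{2\alpha} + \frac{\alpha M^2 K}{2}$.

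Now take expectations and use the tower property: since $x^k$ is determined by $g^1,\dots,g^{k-1}$, we get $\mathbb{E}[\langle x^k-x^{*},g^k\rangle] = \mathbb{E}\bigl[\langle x^k-x^{*}, \mathbb{E}[g^k\mid x^k]\rangle\bigr] = \mathbb{E}[\langle x^k-x^{*},v^k\rangle]$. Combining this with the $\epsilon$-subgradient bound from the first paragraph and summing yields $\sum_{k=1}^{K}\bigl(\mathbb{E}[u(x^k)]-u(x^{*})\bigr) \le \frac{B^2}{2\alpha} + \frac{\alpha M^2 K}{2} + K\epsilon$. Finally, apply Jensen's inequality to the convex function $u$ at $\bar{x}=\frac{1}{K}\sum_{k=1}^{K}x^k$, divide by $K$, and substitute $\alpha=\sqrt{B^2/(M^2K)}$; the two stepsize-dependent terms each reduce to $\frac{BM}{2\sqrt{K}}$, giving $\mathbb{E}[u(\bar{x})]-u(x^{*}) \le \frac{BM}{\sqrt{K}}+\epsilon$. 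The iteration-count claim then follows by requiring $\frac{BM}{\sqrt{K}}\le \hat{\epsilon}-\epsilon$, i.e.\ $K\ge B^2M^2/(\hat{\epsilon}-\epsilon)^2$.

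The only genuinely delicate point is the measure-theoretic bookkeeping around the filtration: one must check that $v^k=\mathbb{E}[g^k\mid x^k]$ is a legitimate random element of $\partial_{\epsilon}u(x^k)$ and that the tower-property step is valid, i.e.\ that $x^k$ (and hence $\langle x^k-x^{*},\cdot\rangle$) is $\sigma(g^1,\dots,g^{k-1})$-measurable while the conditional mean of $g^k$ given that history equals $v^k$. Everything else is the routine SGD telescoping computation; in particular, no properties of the nonlocal gradient $\nabla_n u$ are needed here, since by Lemma~\ref{thm:sgd1} the relevant structure has already been absorbed into the $\epsilon$-subgradient set.
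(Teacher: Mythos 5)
Your proof is correct and follows essentially the same route as the paper: both adapt Theorem 14.8 of \cite{shai}, replacing the exact subgradient inequality by the $\epsilon$-subgradient inequality (picking up the extra $+\epsilon$ per step), then combining the tower property, the telescoping bound for $\sum_k \langle x^k-x^{*},g^k\rangle$, and Jensen's inequality with the stated stepsize. The only cosmetic differences are that you inline the telescoping estimate which the paper invokes as Lemma 14.1 of \cite{shai}, and you condition directly on $x^k$ where the paper conditions on the history $g^{1:k-1}$ and reduces to conditioning on $x^k$ via the Doob--Dynkin lemma and the tower property.
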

Note that Theorem \ref{thm:sgdsgd2} shows that the estimate of $\mathbb{E}[u(\bar{x})]-u(x^{*})$ is composed of two parts: $\frac{BM}{\sqrt{K}}$ corresponds to the usual SGD estimate as in \cite{shai} while the $\epsilon$ term corresponds to the effects of nonlocal interactions. We can now specialize the general $\epsilon-SGD$ theory above to the case of the nonlocal gradient operator. Recall our observation that given an $\epsilon>0$, we have $\nabla_n u(x) \in \partial_{\epsilon} u(x)$ for $n$ large enough. Thus, we can set $g^k = k_{u}(x^k,y)$ to be the $\epsilon-$ subgradients used in $\epsilon-SGD$ and choose $p_{Y}(y) = \rho_n(x-y)p_{X}(x)$ to be the distribution from which the samples $y$ are chosen. We then have a \emph{nonlocal $\epsilon-SGD$ procedure}.

\section{Stylized Application: Non-differentiable Parameter Estimation}\label{sec:numapp}
As a numerical example of the theory developed in section \ref{sec:1storder}, we consider the case of \emph{parameter estimation} of non-differentiable parametric mappings motivated by \cite{wakin1,wakin2} but restrict our attention to the case of a compact $1$ dimensional Euclidean parameter space $\Theta = [0,1]$. Let $\mathcal{M}$ be the $1$ dimensional manifold consisting of translations of a rectangular pulse as shown in figure \ref{fig:pulse}. $\mathcal{M}$ is therefore the $1$ dimensional manifold $\{f(x-\theta):\theta \in \Theta\}$ viewed as a submanifold of $L^2([0,1])$ and $f(x)$ is the template pulse in figure \ref{fig:pulse} (green dashed line with support $[0,0.125]$). Note that while $\Theta$  is a flat Euclidean space, its image $\mathcal{M} \subset L^2([0,1])$ is a topological manifold. It is known (see \cite{grimes,wakin1}) that if $\mathcal{M}$ is endowed with the $L^2([0,1])$ metric, then the map $\theta\mapsto f(x-\theta):=f_\theta$ is non-differentiable. Indeed \cite{grimes,wakin1} observe that\begin{equation}\frac{\|f_{\theta_{1}}-f_{\theta_{2}}\|_{L^2([0,1])}}{|\theta_1-\theta_2|}\propto C|\theta_1-\theta_2|^{-\frac{1}{2}}.\label{eqn:estimate1}\end{equation} Given a template point $g\in \mathcal{M}$ corresponding to an unknown $\theta^*$ the objective is to recover an estimate $\hat{\theta}$ of $\theta^*$ by minimizing the function $\mathcal{E}(\theta) = \|f_\theta-g\|_{L^2([0,1])}$. The approach taken in \cite{wakin1} is to ``smoothen" the manifold by convolution with a Gaussian kernel and applying the standard Newton's method to the smoothed manifold. We can directly address this problem without smoothing the manifold. Indeed, it is clear from the estimate \ref{eqn:estimate1} that $\frac{|\mathcal{E}(\theta_1)-\mathcal{E}(\theta_2)|}{|\theta_1-\theta_2|} \rho_n(\theta_1-\theta_2)$ is integrable as a function of $\theta_2$ for any $\theta_1\neq \theta_2$, and hence the nonlocal gradient of $\mathcal{E}(\theta)$ exists. We can therefore use our nonlocal gradient descent to this problem to arrive at an estimate $\hat{\theta}$ of $\theta^*$. Note that we can view the scale of nonlocality $n$ as playing the role of a scale parameter vis-a-vis the approach of \cite{wakin1}, however, our approach can be viewed as smoothing the tangent space, rather than the manifold itself. We apply our ``vanilla" nonlocal gradient descent with a learning rate $\alpha \in (0,1]$. Experiments were done on a 64-bit Linux laptop with $3.8$ GB RAM and 2.60GHz Intel Core i5 CPU. We consider the case of both bump function and Gaussian kernels that serve as $\rho_n$ in the nonlocal gradient definition, with $n=1,2,3$ (see figure \ref{fig:num}). The initial and ``ground truth" pulses corresponding to $\theta^0=0.1$ and $\theta^*=0.5$ are shown in figure \ref{fig:num}. Given the extreme nonconvexity of $\mathcal{E}(\theta)$, we adapted the nonlocal gradient descent presented earlier by decrementing the learning rate by half whenever the ratio of successive gradients between iterations exceeds $2.5$. The convergence for both bump function and Gaussian kernels is rapid as seen in figure \ref{fig:res}. The Gaussian kernel case gracefully converges: increasing $n$ results in more rapid convergence, as expected. However, the effect of numerical instability due to vanishing gradients is clear in the case of the bump function kernel: increasing $n$ results in nonmonotone convergence. Nevertheless, the efficacy of the nonlocal approach is apparent from this example, and extensions to higher dimensions are certainly possible, although the cost of dense quadrature-based integration like the one used here become prohibitive as the dimension of the parameter space increases.
\begin{figure*}[t!]
    \centering
\begin{subfigure}[t]{\textwidth}
        \centering
        \includegraphics[height=1.5in]{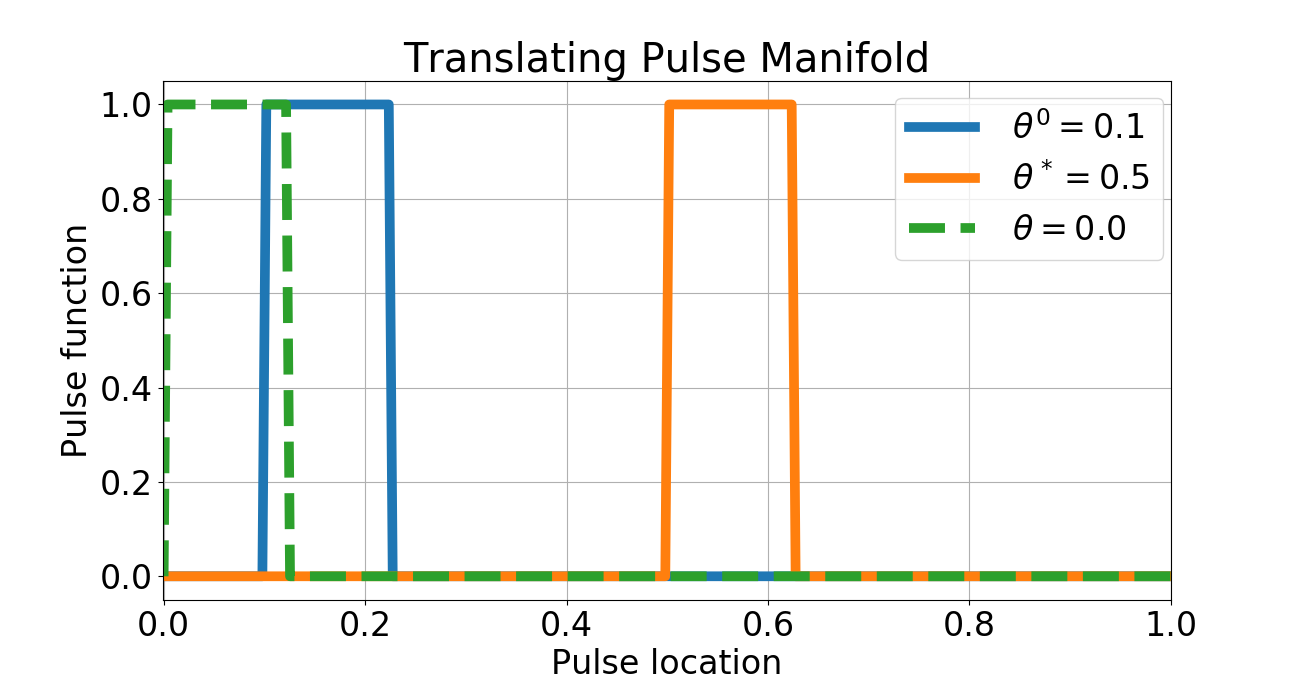}
        \caption{Points on $\mathcal{M}$}
    \end{subfigure}%
    ~ 
    \caption{}\label{fig:pulse}
\end{figure*}
\begin{figure*}[t!]
    \centering
    \begin{subfigure}[t]{0.5\textwidth}
        \centering
        \includegraphics[height=1.5in]{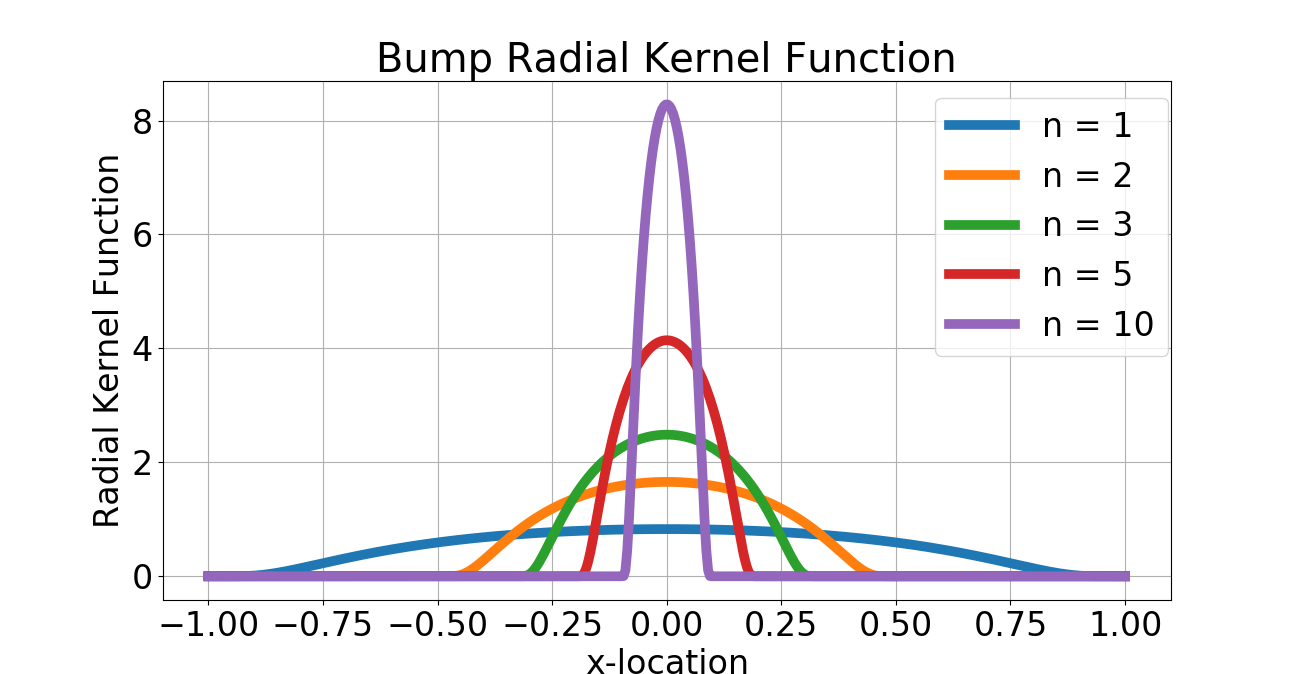}
        \caption{Bump function kernels}
    \end{subfigure}%
	~
    \begin{subfigure}[t]{0.5\textwidth}
        \centering
        \includegraphics[height=1.5in]{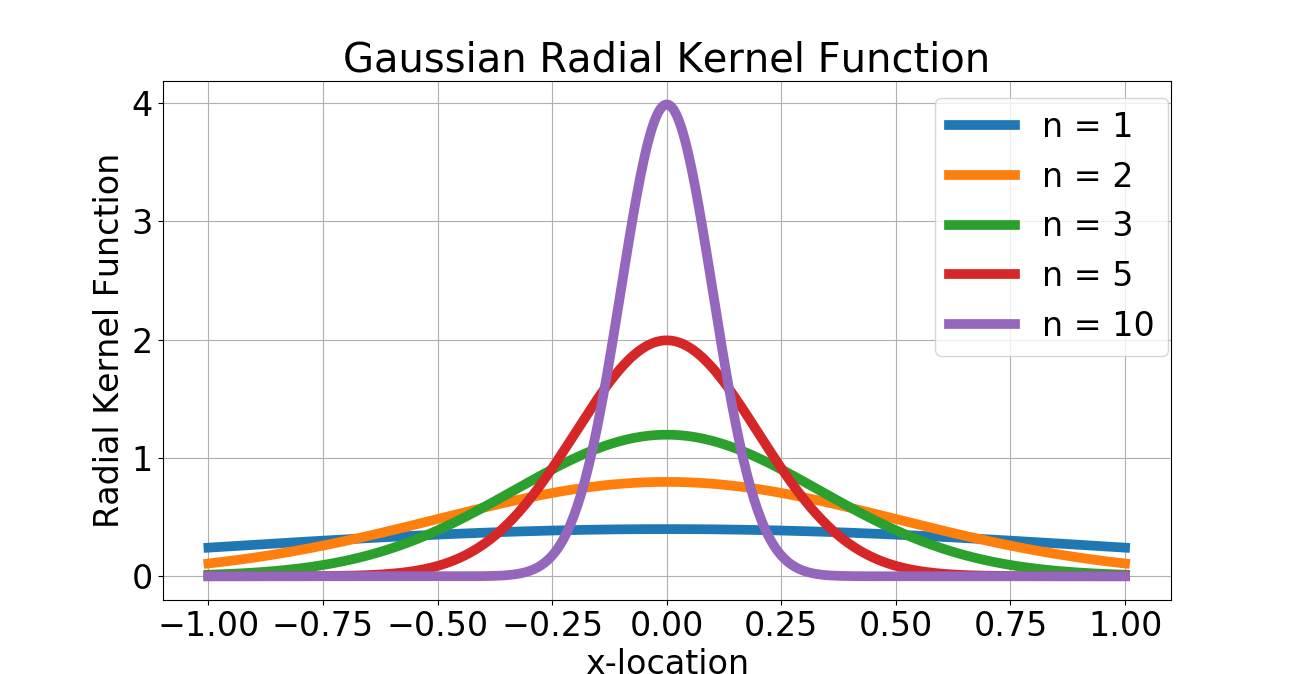}
        \caption{Gaussian kernels}
    \end{subfigure}    
    \caption{}\label{fig:num}
\end{figure*}
\begin{figure*}[t!]
    \centering
    \begin{subfigure}[t]{0.5\textwidth}
        \centering
        \includegraphics[height=1.55in]{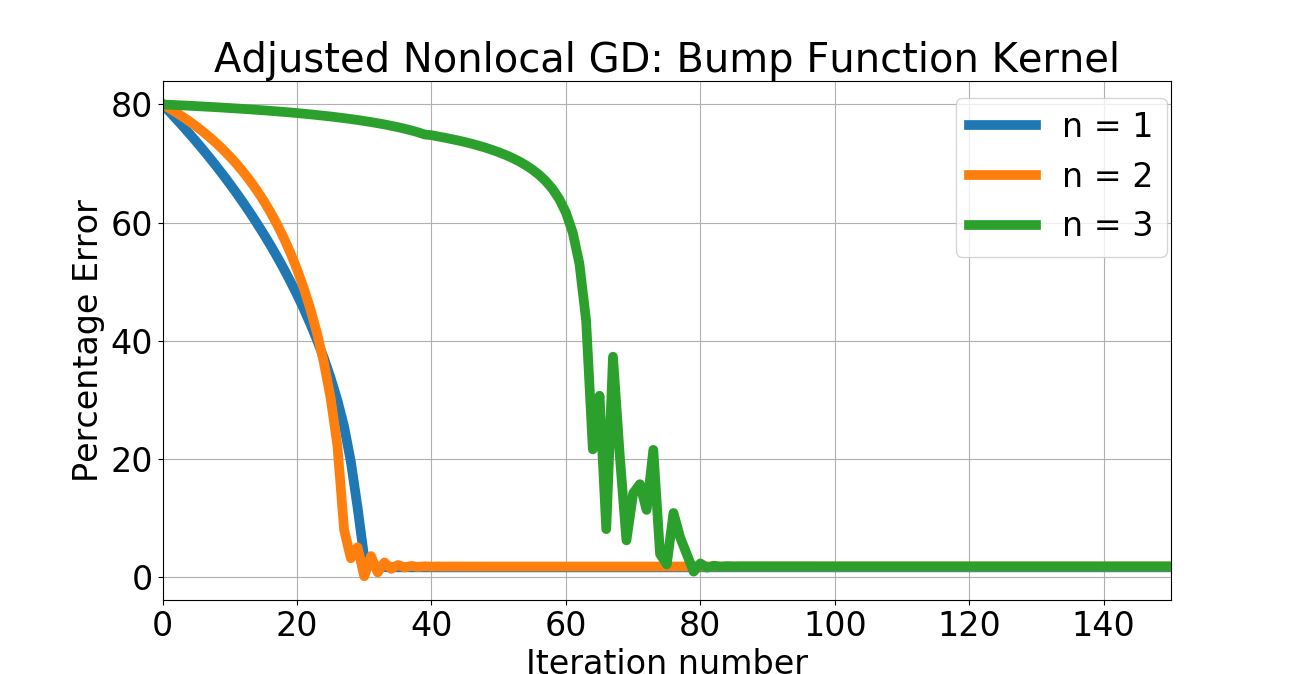}
        \caption{Convergence with bump function kernel}
    \end{subfigure}%
    ~ 
    \begin{subfigure}[t]{0.5\textwidth}
        \centering
        \includegraphics[height=1.55in]{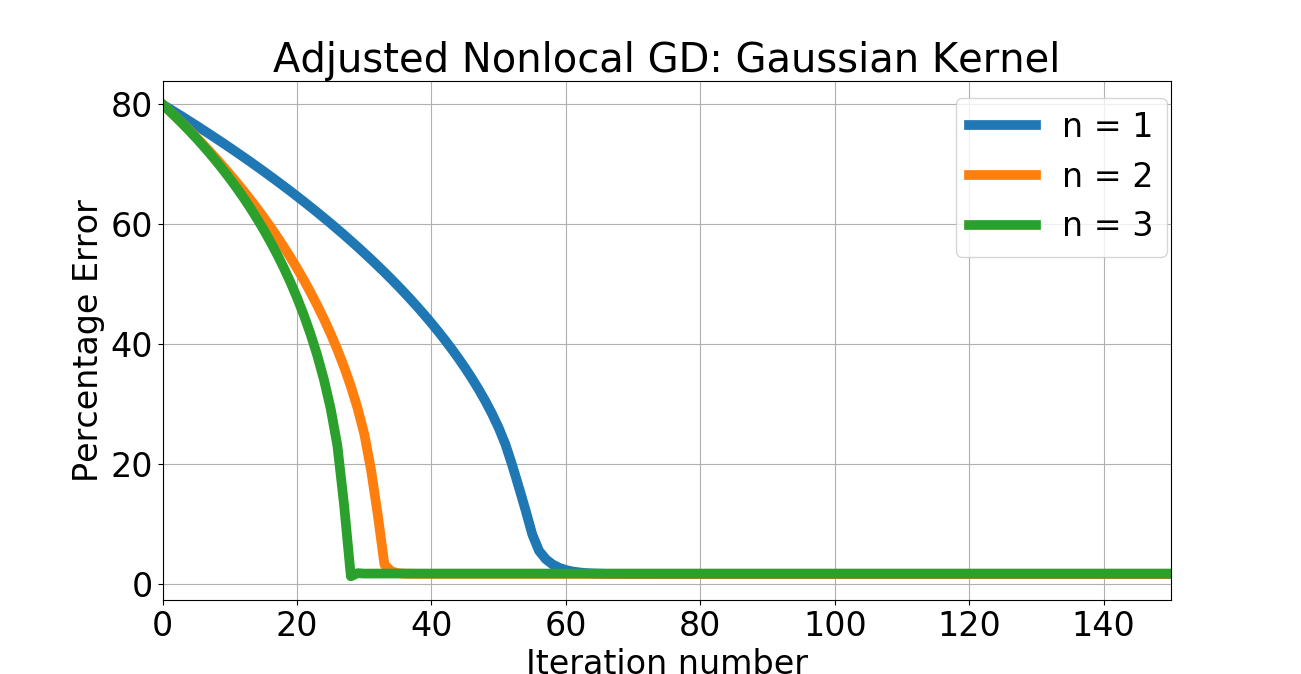}
        \caption{Convergence with Gaussian function kernel}
    \end{subfigure}
\caption{}\label{fig:res}
\end{figure*}
%%%%%%%%%%%%%%2nd ORDER%%%%%%%%%%%%%%%%%%%%%%%%%%%%%%%%%
\section{$2^{\text{nd}}$ Order Nonlocal Theory and Methods}\label{sec:A_ahessian}
We now turn our attention to second order methods, in particular Newton's method. Recall from equation \ref{eq:A_a4hessians} that we have (at least) 4 distinct choices for defining a nonlocal analog of the Hessian of a function at a point. It is natural to consider the relationship between the four choices, and analyze the modes of convergence of each to the ``usual" local Hessian operator. The following results make this relationship clear.
%%%%%%%%%THM HESS 1%%%%%%%%%%%%%%%%%%%

\begin{theorem}
\begin{itemize}\item Let $u\in C^{2}(\bar{\Omega})$. Assume that there exists an $N>0$ such that for all $n>N$, $u_{x_i}^{n} \in C^{1}(\bar{\Omega})$. Then, For $n$ fixed and $n>N$, we have $(u_{x_i}^{n})_{x_j}^{m}\rightarrow (u_{x_i}^{n})_{x_j}$ locally uniformly as $m\rightarrow \infty,$ that is, $(H_{n,m}^{1})_{i,j} \xrightarrow{m\rightarrow\infty, \, n\,(\text{fixed}) >N }(H_{n}^{3})_{i,j}$ locally uniformly. If $u\in C^{2}_{c}(\Omega)$, the convergence is uniform.

\item Let $u\in C^{2}(\bar{\Omega})$. As $n\rightarrow \infty$, $ (u_{x_i})^{n}_{x_j} \rightarrow (u_{x_i})_{x_j} = \frac{\partial^2 u}{\partial x_i \partial x_j}$ locally uniformly, that is,  $(H_{n}^{2})_{i,j} \xrightarrow{n\rightarrow\infty}(H)_{i,j}$ locally uniformly. If $u\in C^{2}_{c}(\Omega)$, the convergence is uniform.\end{itemize}
\label{thm:hess1a}
\end{theorem}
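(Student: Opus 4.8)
The plan is to recognize that both nonlocal Hessians named in the statement are nothing but nonlocal \emph{gradients} of first-order derivatives, so that the theorem follows from the single first-order convergence fact already recorded in Section~\ref{sec:1storder}: for $v\in C^1(\bar\Omega)$ one has $\nabla_m v\to\nabla v$ locally uniformly as $m\to\infty$ (\cite{menspec}), and uniformly if $v\in C^1_c(\Omega)$. Comparing equation~\ref{eq:A_a4hessians} with equation~\ref{eq:nlder2}, the identities $(H^1_{n,m})_{i,j}(x)=\bigl(\nabla_m(u^n_{x_i})(x)\bigr)_j$ and $(H^2_n)_{i,j}(x)=\bigl(\nabla_n(u_{x_i})(x)\bigr)_j$ are immediate from the definitions, as are $(H^3_n)_{i,j}=(u^n_{x_i})_{x_j}$ and $(H)_{i,j}=(u_{x_i})_{x_j}$. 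So in each bullet the task reduces to applying the $\nabla_\bullet\to\nabla$ result to the correct $C^1$ integrand.

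For the second bullet: since $u\in C^2(\bar\Omega)$, each $u_{x_i}\in C^1(\bar\Omega)$, and applying the cited result with $v=u_{x_i}$ gives $(u_{x_i})^n_{x_j}=(\nabla_n(u_{x_i}))_j\to(\nabla(u_{x_i}))_j=\frac{\partial^2 u}{\partial x_i\partial x_j}$ locally uniformly as $n\to\infty$, i.e.\ $(H^2_n)_{i,j}\to(H)_{i,j}$ locally uniformly; here I also use that $u\in C^2$ makes $\frac{\partial^2 u}{\partial x_i\partial x_j}$ a genuine continuous function, so the limit indeed is the Hessian entry as defined in Section~\ref{sec:intro1.1}. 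If $u\in C^2_c(\Omega)$ then $u_{x_i}\in C^1_c(\Omega)$ and the uniform version of the cited result upgrades the conclusion to uniform convergence on $\Omega$.

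For the first bullet: the only delicacy is that $u^n_{x_i}$ is a priori merely a nonlocal gradient component of a $C^1$ function and need not itself be $C^1$; the standing hypothesis that some $N$ makes $u^n_{x_i}\in C^1(\bar\Omega)$ for every $n>N$ is precisely what licenses feeding $u^n_{x_i}$ into the first-order convergence statement. With that in hand, I fix $n>N$, set $v=u^n_{x_i}\in C^1(\bar\Omega)$ (treating $n$ as a frozen parameter, so the relevant statement is the plain single-index one), and get $(u^n_{x_i})^m_{x_j}=(\nabla_m v)_j\to(\nabla v)_j=(u^n_{x_i})_{x_j}=(H^3_n)_{i,j}$ locally uniformly as $m\to\infty$. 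For the compactly supported refinement, I would observe that if $u\in C^2_c(\Omega)$ and the kernels $\rho_m$ are taken with compact support (as permitted in Section~\ref{sec:intro1.1}), then $u^n_{x_i}$ vanishes outside a fixed neighbourhood of $\mathrm{supp}(u)$, hence $u^n_{x_i}\in C^1_c(\Omega)$, and the uniform version again applies.

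I do not expect a serious analytic obstacle: the content is essentially bookkeeping — verifying the operator identities between the $H^\bullet$'s and iterated nonlocal/local gradients, and keeping straight that in the first bullet $n$ is held fixed so only the single-index first-order theorem is invoked, with the $C^1$-regularity hypothesis on $u^n_{x_i}$ used solely to make that invocation legal. The one place requiring a genuine (if small) argument is the compact-support propagation $u\in C^2_c(\Omega)\Rightarrow u^n_{x_i}\in C^1_c(\Omega)$ underlying the uniform claims, which needs compactly supported kernels (or a separate decay estimate on $u^n_{x_i}$); I would state that assumption explicitly where it is used.
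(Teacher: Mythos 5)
Your proof is correct and takes essentially the same route as the paper: both bullets are reduced to Theorem 1.1 of \cite{menspec}, applied with $n$ fixed to $v=u^{n}_{x_i}\in C^{1}(\bar{\Omega})$ for the first bullet and to $v=u_{x_i}\in C^{1}(\bar{\Omega})$ for the second. Your additional observation that the uniform-convergence claim in the first bullet requires $u^{n}_{x_i}\in C^{1}_{c}(\Omega)$, which in turn needs compactly supported kernels $\rho_n$ (or a separate decay estimate), is a genuine point that the paper's proof passes over without comment and is worth making explicit.
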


%%%%%%%%%THM HESS 2%%%%%%%%%%%%%%%%%%%
We say a sequence $u_n \overset{H_{0}^{1}(\Omega)}\rightharpoonup u$ (read: $u_n$ converges weakly to $u$ in $H_{0}^{1}(\Omega)$) with $u_n, u \in L^2(\Omega)$ if $(u_n,v)_{L^2(\Omega)}\rightarrow (u,v)_{L^2(\Omega)}$ for all $v\in H^{1}_{0}(\Omega)$ as $n\rightarrow \infty$. We have the following theorem.
\begin{theorem}Let $u\in C^{2}(\Omega)$. Then: \begin{itemize} \item As $n\rightarrow \infty$, we have  $ (u_{x_i}^{n})_{x_j} \overset{H_{0}^{1}(\Omega)}\rightharpoonup (u_{x_i})_{x_j} = \frac{\partial^2 u}{\partial x_i \partial x_j}$, that is,  $ (H_{n}^{3})_{i,j} \overset{H_{0}^{1}(\Omega)}\rightharpoonup (H)_{i,j}$\item Combined with the hypothesis of theorem \ref{thm:hess1a}, we have:\[(H_{n,m}^{1})_{i,j} \xrightarrow{m\rightarrow\infty, \, n\,(\text{fixed}) >N }(H_{n}^{3})_{i,j} \overset{H_{0}^{1}(\Omega)}\rightharpoonup (H)_{i,j},\]\end{itemize}
with the convergence of $(H_{n,m}^{1})_{i,j}$ to $(H_{n}^{3})_{i,j}$ being locally uniform if $u\in C^2(\Omega)$ and uniform if $u\in C^2_{c}(\Omega)$.
\label{thm:hess2a}
\end{theorem}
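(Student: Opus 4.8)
The plan is to treat the two bullets separately: the first is the real content, the second a formal concatenation. For the first bullet I need $\int_\Omega (u_{x_i}^n)_{x_j}\,v\,dx \to \int_\Omega \frac{\partial^2 u}{\partial x_i\partial x_j}\,v\,dx$ for every $v\in H_0^1(\Omega)$. The key point is that $(u_{x_i}^n)_{x_j}$ is a \emph{classical} derivative of the component $u_{x_i}^n=(\nabla_n u)_i$, hence also its distributional derivative, while by the first-order convergence result of \cite{menspec} (localized to compact subsets of $\Omega$, valid since $u\in C^2(\Omega)\subset C^1(\Omega)$) we already know $u_{x_i}^n\to u_{x_i}$ locally uniformly. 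Local uniform convergence implies convergence in $\mathcal D'(\Omega)$, and distributional differentiation is sequentially continuous on $\mathcal D'(\Omega)$, so $(u_{x_i}^n)_{x_j}\to\frac{\partial^2 u}{\partial x_i\partial x_j}$ at least in $\mathcal D'(\Omega)$.

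Concretely, for $v\in C_c^\infty(\Omega)$ with support $K$, I would integrate by parts twice — legitimate because $u_{x_i}^n$ is $C^1$ in the interior of $\Omega$ (differentiation under the integral, using smoothness of $\rho_n$ and local Lipschitz control on $\nabla u$), $u_{x_i}$ is $C^1$ there since $u\in C^2(\Omega)$, and $v$ is compactly supported so no boundary terms arise — to write
\[
\int_\Omega (u_{x_i}^n)_{x_j}\,v\,dx \;=\; -\int_\Omega u_{x_i}^n\,v_{x_j}\,dx \;\longrightarrow\; -\int_\Omega u_{x_i}\,v_{x_j}\,dx \;=\; \int_\Omega \frac{\partial^2 u}{\partial x_i\partial x_j}\,v\,dx ,
\]
the middle limit coming from the uniform convergence of $u_{x_i}^n$ on $K$ paired against the bounded, $K$-supported $v_{x_j}$. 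To pass from $v\in C_c^\infty(\Omega)$ to arbitrary $v\in H_0^1(\Omega)$ — using density of $C_c^\infty(\Omega)$ in $H_0^1(\Omega)$ — I need a uniform estimate $\sup_n\|(u_{x_i}^n)_{x_j}\|_{L^2(\Omega)}<\infty$ (or a uniform $H^{-1}(\Omega)$ bound): with it, Cauchy--Schwarz makes $\big|\int_\Omega (u_{x_i}^n)_{x_j}(v-v_k)\,dx\big|$ small uniformly in $n$ for $v_k\to v$ in $H_0^1(\Omega)$, and the convergence on the dense set carries over.

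Securing this uniform bound is the main obstacle, since the hypothesis $u\in C^2(\Omega)$ supplies no a priori control near $\partial\Omega$; I would obtain it by recognizing, via the substitution $y=x-h$, that on interior regions $(u_{x_i}^n)_{x_j}$ is a component of the nonlocal gradient of $u_{x_j}$, and then invoking the $L^p$ mapping estimates for nonlocal gradients in \cite{menspec,brezis} together with $\int_{\mathbb R^D}\rho_n=1$. (If weak-$H_0^1$ convergence is read as tested only against $C_c^1(\Omega)$, the preceding paragraph already suffices.)

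Finally, the second bullet needs nothing new: Theorem \ref{thm:hess1a} gives $(H_{n,m}^1)_{i,j}\to (H_n^3)_{i,j}$ as $m\to\infty$ for each fixed $n>N$, locally uniformly when $u\in C^2(\Omega)$ and uniformly when $u\in C^2_c(\Omega)$, and the first bullet gives $(H_n^3)_{i,j}\rightharpoonup(H)_{i,j}$ in $H_0^1(\Omega)$ as $n\to\infty$; writing these two convergences in sequence is precisely the asserted chain.
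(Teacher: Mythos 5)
Your handling of the second bullet (pure concatenation with Theorem \ref{thm:hess1a}) matches the paper, and your integration-by-parts idea for test functions in $C_c^\infty(\Omega)$ is sound as far as it goes. But there is a genuine gap in the first bullet: the paper defines $\overset{H_{0}^{1}(\Omega)}\rightharpoonup$ by testing against \emph{every} $v\in H_0^1(\Omega)$ in the $L^2$ pairing, and your passage from $C_c^\infty(\Omega)$ to $H_0^1(\Omega)$ hinges on the uniform bound $\sup_n\|(u_{x_i}^n)_{x_j}\|_{L^2(\Omega)}<\infty$, which you acknowledge is the main obstacle and never actually establish. The route you sketch for it does not work as stated: after the substitution $y=x-h$ the integration domain $\{h: x-h\in\Omega\}$ depends on $x$, so differentiating in $x_j$ produces moving-boundary contributions, and differentiating the original expression in $x_j$ also hits the factors $(x_i-y_i)/\|x-y\|^2$ and $\rho_n(x-y)$, not just $u(x)-u(y)$. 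In other words, the classical derivative of the nonlocal derivative is not simply a component of $\nabla_{\rho_n}(u_{x_j})$ --- this is exactly why the paper introduces $H_n^2$ and $H_n^3$ as distinct objects --- so the first-order $L^p$ mapping estimates cannot be invoked directly. Your fallback of testing only against $C_c^1(\Omega)$ proves a weaker (distributional) statement than the one the theorem asserts.

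The paper avoids this entirely by a shorter argument: for $v\in H_0^1(\Omega)$ integrate by parts once (the zero trace of $v$ kills the boundary term), then Cauchy--Schwarz gives
\[
\bigl|\bigl((u^{n}_{x_i})_{x_j}-(u_{x_i})_{x_j},\,v\bigr)_{L^2}\bigr|\;\le\;\|u^{n}_{x_i}-u_{x_i}\|_{L^2(\Omega)}\,\|v\|_{H^{1}_{0}(\Omega)},
\]
and the right-hand side tends to zero by the $L^2$ localization statement of Theorem 1.1 of \cite{menspec} applied to $u_{x_i}$, which lies in $H^1(\Omega)$ because the proof works with $u\in C^{2}(\bar{\Omega})$ (hence $u\in H^2(\Omega)$). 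This removes any need for a uniform-in-$n$ bound on second-order quantities, which is the step your argument is missing; note that the paper's own hypothesis-versus-proof mismatch ($C^2(\Omega)$ in the statement, $C^2(\bar\Omega)$ in the proof) is precisely the boundary-control issue you correctly sensed, but the cure is the global $L^2$ convergence of the \emph{first-order} nonlocal gradients for Sobolev functions, not an $L^2$ bound on $(u_{x_i}^n)_{x_j}$.
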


The proofs of theorem \ref{thm:hess1a} and \ref{thm:hess2a} are found in section \ref{sec:A_aintro}. While we have shown weak convergence of $(H_{n,m}^{1})_{i,j} $ to $(H)_{i,j},$ as $m,n\rightarrow \infty$, we can show that under additional hypothesis on the radial functions $\rho_n$, we have an upper bound on how the sequence $(H^{1}_{n,m})_{i,j}$ deviates from strong (uniform) convergence.
%%%%%%%%%THM HESS 3%%%%%%%%%%%%%%%%%%%
\begin{theorem}Assume in addition to the existing hypotheses on $\rho_m$, that $\int_{\mathbb{R}^D}\frac{\rho_m(x)}{\|x\|}dx<M(m)$. Let $u\in C^2(\bar{\Omega})$. Then, given an $\epsilon >0$ there is an $N>0$ such that for all $n>N$, we have that \[\|(H_{n,m}^{1})_{i,j} - c_{j}^{m}(x)(H)_{i,j}\|_{L^{\infty}(\Omega)}\le 2D\epsilon \lim_{m\rightarrow \infty}M(m).\] Thus, if $M(m)$ is independent of $m$, we have local uniform convergence of $H_{n,m}^{1}$ to $H$. Otherwise, the $L^{\infty}$ error grows as $M(m)$.
\label{thm:hess3}
\end{theorem}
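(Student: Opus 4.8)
The plan is to decompose the difference $(H_{n,m}^{1})_{i,j}(x) - c_j^m(x)(H)_{i,j}(x)$ into an \emph{approximation error}, coming from replacing the genuine gradient component $u_{x_i}$ by its nonlocal surrogate $u^n_{x_i}$, and a \emph{consistency error}, coming from the nonlocal second difference of the honestly $C^1$ function $u_{x_i}$. Here $c_j^m(x)$ is the factor produced when the $j$-th coordinate function (a linear function) is fed through the nonlocal $m$-derivative restricted to $\Omega$, namely $c_j^m(x)=D\int_\Omega \frac{(x_j-y_j)^2}{\|x-y\|^2}\rho_m(x-y)\,dy$; by radiality $\int_{\mathbb{R}^D} z_j^2\|z\|^{-2}\rho_m(z)\,dz=\tfrac1D$ for every $j$, so $c_j^m(x)\to 1$ locally uniformly as $m\to\infty$, the defect being at most $D\int_{\|z\|>\mathrm{dist}(x,\partial\Omega)}\rho_m(z)\,dz$. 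Throughout I work on a fixed compact $V\subset\subset\Omega$ (so ``$L^\infty$'' should be read as $L^\infty(V)$, or else one assumes $u\in C^2_c(\Omega)$ to get all of $\Omega$).

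Writing $\phi_n:=u^n_{x_i}-u_{x_i}$, split
\begin{equation*}
(H_{n,m}^{1})_{i,j}(x) - c_j^m(x)(H)_{i,j}(x) = \mathrm{I}_{n,m}(x) + \mathrm{II}_m(x),
\end{equation*}
where $\mathrm{I}_{n,m}(x)=D\int_\Omega \frac{\phi_n(x)-\phi_n(y)}{\|x-y\|}\frac{x_j-y_j}{\|x-y\|}\rho_m(x-y)\,dy$ and $\mathrm{II}_m(x)=D\int_\Omega \frac{u_{x_i}(x)-u_{x_i}(y)}{\|x-y\|}\frac{x_j-y_j}{\|x-y\|}\rho_m(x-y)\,dy - c_j^m(x)(H)_{i,j}(x)$. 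For $\mathrm{I}_{n,m}$: since $u\in C^2(\bar\Omega)\subset C^1(\bar\Omega)$, the result of \cite{menspec} gives $\nabla_n u\to\nabla u$ locally uniformly, so given $\epsilon>0$ there is $N$ with $\|\phi_n\|_{L^\infty(V)}<\epsilon$ for $n>N$; then $|\phi_n(x)-\phi_n(y)|\le 2\epsilon$, and as $|x_j-y_j|\le\|x-y\|$,
\begin{equation*}
|\mathrm{I}_{n,m}(x)| \le 2D\epsilon\int_\Omega \frac{\rho_m(x-y)}{\|x-y\|}\,dy \le 2D\epsilon\int_{\mathbb{R}^D}\frac{\rho_m(z)}{\|z\|}\,dz < 2D\epsilon M(m),
\end{equation*}
which is exactly where the new hypothesis $\int_{\mathbb{R}^D}\rho_m(z)\|z\|^{-1}dz<M(m)$ enters.

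For $\mathrm{II}_m$, Taylor-expand $u_{x_i}(y)=u_{x_i}(x)+\nabla u_{x_i}(x)\cdot(y-x)+R(x,y)$ with $|R(x,y)|\le\|x-y\|\,\omega(\|x-y\|)$, $\omega$ the (uniform, $\bar\Omega$ compact) modulus of continuity of $\nabla u_{x_i}$. The linear part gives $D\sum_k u_{x_ix_k}(x)\int_\Omega \frac{(x_k-y_k)(x_j-y_j)}{\|x-y\|^2}\rho_m(x-y)\,dy$: the $k=j$ term is precisely $c_j^m(x)(H)_{i,j}(x)$, which cancels, while for $k\ne j$ the full-space integral vanishes by symmetry, leaving $-\int_{(\Omega-x)^c}z_kz_j\|z\|^{-2}\rho_m(z)\,dz=O(\int_{\|z\|>\mathrm{dist}(x,\partial\Omega)}\rho_m)\to 0$ locally uniformly. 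The remainder part is bounded by $D\int_\Omega \omega(\|x-y\|)\rho_m(x-y)\,dy\le D\int_{\mathbb{R}^D}\tilde\omega(\|z\|)\rho_m(z)\,dz\to 0$ uniformly in $x$, using a bounded extension $\tilde\omega$ of $\omega$ and the concentration property of $\rho_m$. Hence $\|\mathrm{II}_m\|_{L^\infty}\to 0$ as $m\to\infty$; combining with the bound on $\mathrm{I}_{n,m}$ and passing to the limit in $m$ gives, for all $n>N$, $\limsup_{m\to\infty}\|(H_{n,m}^{1})_{i,j}-c_j^m(\cdot)(H)_{i,j}\|_{L^\infty}\le 2D\epsilon\lim_{m\to\infty}M(m)$, which is the assertion; if $M(m)\equiv M$, letting in addition $\epsilon\to0$ and invoking $c_j^m\to1$ yields local uniform convergence $(H_{n,m}^{1})_{i,j}\to(H)_{i,j}$.

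The main obstacle is $\mathrm{I}_{n,m}$ near the diagonal $y=x$: the only available control on $\phi_n=u^n_{x_i}-u_{x_i}$ is a sup bound, and dividing $|\phi_n(x)-\phi_n(y)|\le 2\epsilon$ by $\|x-y\|$ produces the weight $\rho_m(x-y)/\|x-y\|$, which is not integrable without the extra assumption on $\rho_m$ — this is why that hypothesis is indispensable and why the error scales like $M(m)$. A secondary nuisance is that the off-diagonal cross terms and the convergence $c_j^m(x)\to1$ are only \emph{locally} uniform (they degrade as $\mathrm{dist}(x,\partial\Omega)\to0$), which forces the final convergence statement to be local uniform rather than uniform on all of $\Omega$ unless one strengthens $u\in C^2(\bar\Omega)$ to $u\in C^2_c(\Omega)$.
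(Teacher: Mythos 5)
Your proposal is correct and its skeleton is the same as the paper's: both split the error into (i) the term coming from replacing $u_{x_i}$ by $u^{n}_{x_i}$, which after the triangle inequality is an integral of $2\epsilon$ against $\rho_m(x-y)/\|x-y\|$ and is exactly where the extra hypothesis $\int_{\mathbb{R}^D}\rho_m(z)\|z\|^{-1}dz<M(m)$ is spent, yielding the $2D\epsilon M(m)$ bound; and (ii) a consistency term measuring how the nonlocal $m$-derivative of the genuine $C^{1}$ function $u_{x_i}$ deviates from $c_j^m(x)(H)_{i,j}$, which tends to $0$ as $m\to\infty$. Where you diverge is in the treatment of (ii): the paper applies a one-variable-style mean value statement $u_{x_i}(x)-u_{x_i}(y)=(u_{x_i})_{x_j}(\eta)(x_j-y_j)$ on a small ball plus a near/far split of the integral, which as written suppresses the contributions of the partials $u_{x_ix_k}$ with $k\neq j$; you instead perform a full first-order Taylor expansion, cancel the $k=j$ term exactly against $c_j^m(x)(H)_{i,j}$, kill the cross terms $k\neq j$ by radial symmetry of $\rho_m$ up to a boundary defect $O\bigl(\int_{\|z\|>\mathrm{dist}(x,\partial\Omega)}\rho_m\bigr)$, and control the remainder by the modulus of continuity of $\nabla u_{x_i}$. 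Your route is the more careful one in the multidimensional setting, since it makes explicit why the off-diagonal second derivatives do not contribute in the limit; the paper's route is shorter but leans on an MVT identity that is only literally valid componentwise in one dimension. Two shared caveats, neither unique to you: your bound on term (i) uses $\|u^n_{x_i}-u_{x_i}\|_{L^\infty}$ with $y$ ranging over all of $\Omega$ while Theorem 1.1 of the cited work only gives locally uniform convergence for $u\in C^2(\bar\Omega)$ (the paper makes the identical move), which is why both arguments really deliver the estimate on compact subsets, consistent with the ``local uniform'' conclusion; and your normalization $c_j^m(x)=D\int_\Omega (x_j-y_j)^2\|x-y\|^{-2}\rho_m(x-y)\,dy$ absorbs the factor $D$ that the paper carries separately, which is harmless but worth flagging against the theorem's statement.
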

We note that theorem \ref{thm:hess3} provides a worst-case upper bound. Indeed, for the case of the Gaussian and ``bump" function densities we discussed earlier, a simple scaling argument shows that the corresponding constants $M(m)$ of theorem \ref{thm:hess3} grow with $m$, and therefore, the $L^{\infty}(\Omega)$ error is possibly unbounded. However, in practice, it seems plausible that there is a subclass of $C^{2}(\Omega)$ functions for which the $L^{\infty}(\Omega)$ error between $H_{n,m}^{1}$ and ``usual" local Hessian is bounded. We do not go further into the analysis of finding the optimal class of functions for which the error is bounded.
%%%%%%%%%%%%%%%%%%%
\subsection{Nonlocal Newton's Method}
We now consider a nonlocal version of Newton's method. Recall the standard Newton iterations to minimize $u(x)$ for $x\in \Omega$ with stepsize $\beta^k$:
\begin{equation}
   x^{k+1}=x^{k}-\beta^k (Hu^k)^{-1} \nabla u^k,
\end{equation}
where $Hu^k = Hu(x^k)$ is the usual Hessian of $u$ evaluated at $x^k$, and, as before, $\nabla u^k = \nabla u (x^k)$ is the gradient of $u$ evaluated at $x^k$. We shall consider the case of $\beta^k$ having been chosen by some line-search type method, sufficient to ensure a descent property at each step. The important point is that we consider the same $\beta^k$ for the nonlocal case:
\begin{equation}
   x^{k+1,n}=x^{k,n}-\beta^k (H_n u^{k,n})^{-1} \nabla_n u^{k,n}
\end{equation}
where $H_n$ is the nonlocal Hessian $H^{4}_{n}$ defined earlier, and $x^0=x^{0,n}$ for all $n$. As we did for gradient descent, in order to compare the nonlocal and local approaches, we will assume that the functions we wish to minimize are sufficiently regular. Again, we note that the nonlocal methods are still valid for possibly non-differentiable functions, so long as the nonlocal operators are well defined. We choose to do our analysis with $H^{4}_{n}$; while we can certainly consider the other possible nonlocal Hessians for numerical computations, we cannot guarantee theoretical convergence properties, due to only weak convergence of the other Hessians to the standard Hessian. Also, by \cite{spechess}, we can ensure uniform convergence of $H^{4}_{n}$ to $H$, though we tacitly expand the domain of $u$ from $\Omega$ to all of $\mathbb{R}^D$. Thus, if $u$ has compact support $K\subset \Omega$, we extend the domain of $u$ from $\Omega$ to all of $\mathbb{R}^D$ by zero while maintaining the regularity of $u$ in the extension to all of $\mathbb{R}^D$. We continue to denote by $u$ the extension of $u$ to all of $\mathbb{R}^D$.

Suppose $x^{*}\in \Omega$ is a minimizer of $u(x)$. We assume that $Hu(x)$ is continuous, i.e., $u(x)$ is at least $C^2(\mathbb{R}^D)$. Also, we assume $(Hu(x^{*}))^{-1}$ exists, and therefore, by \cite{zak,russell}, there is a neighborhood $U$ of $x^{*}$ such that $(Hu(x))^{-1}$ exists on $U$ and is continuous as $x^{*}$. We let $f(x)=(Hu(x))^{-1}\nabla u(x)$, on $U$ and assume enough regularity of $u$ to allow $f\in C^{1}_{c}(U)$. We also assume that $(Hu(x))^{-1}$ is uniformly continuous on $U$. We can now state the following result, whose proof can be found in section \ref{sec:A_aintro}.

\begin{theorem}\label{thm:newton1}
Assume that $u$ has compact support $K \subset \mathbb{R}^D$, that $(Hu(x))^{-1}$ is uniformly continuous on a neighborhood $U$ of $x^{*}$, and enough regularity of $u$ to allow $f\in C^{1}_{c}(U)$. Then, given $\epsilon>0$, there exists an $N>0$ so that for $n>N$, we have \[\|x^{k}-x^{k,n}\|<\epsilon.\]
\end{theorem}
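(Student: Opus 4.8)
The plan is to run an induction on the iteration index $k$, exactly mirroring the structure of the proof of Theorem~\ref{thm:suboptimalgd}, but now tracking the compound update map $x \mapsto x - \beta^k (Hu(x))^{-1}\nabla u(x)$ against its nonlocal counterpart $x \mapsto x - \beta^k (H_n u(x))^{-1}\nabla_n u(x)$. The base case $k=0$ is immediate since $x^0 = x^{0,n}$. For the inductive step, assume $\|x^k - x^{k,n}\| < \epsilon'$ for a suitably small $\epsilon'$ (to be chosen at the end in terms of $\epsilon$, the Lipschitz-type moduli, and the bound on $\beta^k$); then I would estimate
\[
\|x^{k+1} - x^{k+1,n}\| \le \|x^k - x^{k,n}\| + |\beta^k|\,\bigl\| (Hu^k)^{-1}\nabla u^k - (H_n u^{k,n})^{-1}\nabla_n u^{k,n}\bigr\|.
\]
The second term I would split via the triangle inequality into three pieces: (i) replacing $x^{k,n}$ by $x^k$ inside $f = (Hu)^{-1}\nabla u$, controlled by $f \in C^1_c(U)$ (hence Lipschitz) and the inductive hypothesis; (ii) $\|(H_n u^{k})^{-1}\nabla_n u^{k} - (Hu^k)^{-1}\nabla u^k\|$, the genuinely nonlocal error at a fixed point; and (iii) the difference $\|(H_n u^{k,n})^{-1}\nabla_n u^{k,n} - (H_n u^{k})^{-1}\nabla_n u^{k}\|$, which measures how the nonlocal map varies when its argument moves by less than $\epsilon'$.

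For piece (ii) I would invoke the convergence results already in hand: $\nabla_n u^k \to \nabla u^k$ locally uniformly (from \cite{menspec}, as recalled at the start of Section~\ref{sec:1storder}), and $H_n u = H^4_n u \to Hu$ uniformly (Theorem~\ref{thm:hess3} together with the stronger statement cited from \cite{spechess}, valid after extending $u$ by zero to all of $\mathbb{R}^D$ while preserving regularity, which the hypotheses permit). Since $(Hu(x^*))^{-1}$ exists and $(Hu(x))^{-1}$ is continuous on $U$, matrix inversion is locally Lipschitz on the (compact) set of values $Hu(x)$ takes as $x$ ranges over the relevant iterates, so uniform convergence $H_n u \to Hu$ upgrades to uniform convergence $(H_n u)^{-1} \to (Hu)^{-1}$; combined with $\nabla_n u \to \nabla u$ and boundedness of all factors on the compact iterate set, piece (ii) is made smaller than any prescribed tolerance by taking $n$ large. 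Piece (iii) is where the assumed \emph{uniform} continuity of $(Hu(x))^{-1}$ on $U$ does the work: uniform continuity of $(Hu)^{-1}$, together with uniform convergence $(H_n u)^{-1}\to (Hu)^{-1}$, yields an \emph{equicontinuity} estimate for the family $\{(H_n u)^{-1}\}_{n>N}$, so a displacement of size $\epsilon'$ in the argument produces a controllably small change, uniformly in $n$; a parallel (and easier) equicontinuity statement holds for $\nabla_n u$ since the $\nabla_n u^k$ converge locally uniformly to the continuous $\nabla u^k$.

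The main obstacle, and the reason the hypotheses are stated the way they are, is piece (iii): controlling $(H_n u(x^{k,n}))^{-1}\nabla_n u(x^{k,n})$ versus $(H_n u(x^{k}))^{-1}\nabla_n u(x^{k})$ \emph{uniformly in $n$}. One cannot simply quote pointwise convergence of $H_n u$, because the argument $x^{k,n}$ itself depends on $n$; what is needed is that the sequence of nonlocal maps $x \mapsto (H_n u(x))^{-1}\nabla_n u(x)$ is equicontinuous on a neighborhood of $x^k$, and this is exactly what the uniform-continuity assumption on $(Hu)^{-1}$ plus the (uniform) convergence of $H^4_n u$ buys. Once these three pieces are each bounded by $\epsilon/(3|\beta^k|(1+\text{const}))$ — possible by first fixing $\epsilon'$ small for (i) and (iii), then $N$ large for (ii) and to make the equicontinuity moduli kick in — one closes the induction, noting as in Theorem~\ref{thm:suboptimalgd} that at each finite $k$ only finitely many such choices are made, so a single $N$ works for $m = 1,\dots,k$. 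I would also remark that boundedness of the iterates, hence confinement to a compact subset of $U$ where all the uniform estimates are valid, follows from the line-search/descent assumption on $\beta^k$ together with $f \in C^1_c(U)$, in analogy with Proposition~\ref{thm:bdd}.
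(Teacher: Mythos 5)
Your proposal is correct in substance and follows the same overall scheme as the paper (induction on $k$, with the base case trivial from $x^0=x^{0,n}$, and the key step being a triangle-inequality split of $\|(H_nu^{k,n})^{-1}\nabla_n u^{k,n}-(Hu^k)^{-1}\nabla u^k\|$ after establishing uniform convergence of $\nabla_n u\rightarrow\nabla u$, $H^4_nu\rightarrow Hu$ and hence of $f_n:=(H_nu)^{-1}\nabla_n u$ to $f=(Hu)^{-1}\nabla u$ on $U$), but your decomposition of the key term is genuinely different. The paper anchors the nonlocal-versus-local comparison at the \emph{moving} point $x^{k,n}$: it bounds $\|f_n(x^{k,n})-f(x^{k,n})\|$ by the uniform convergence $f_n\rightarrow f$ on $U$ (so the $n$-dependence of the evaluation point is harmless), and then $\|f(x^{k,n})-f(x^k)\|$ by the mean value theorem applied to the \emph{local} map $f\in C^1_c(U)$ together with the induction hypothesis at a smaller tolerance. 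You instead anchor at the fixed point $x^k$ and are therefore forced to control $\|f_n(x^{k,n})-f_n(x^k)\|$, which is why you need the asymptotic equicontinuity of the family $\{f_n\}_{n>N}$; your derivation of it (two uniform-convergence terms plus the modulus of the Lipschitz limit $f$) is valid, but note that unwinding it just reproduces the paper's two terms plus one extra, so what you flag as ``the main obstacle'' is exactly the step the paper's choice of splitting point sidesteps. Two small points: your piece (i) is redundant, since (ii)$+$(iii) already bound the full difference by the triangle inequality; and the uniform convergence $H^4_nu\rightarrow Hu$ should be credited to the result of \cite{spechess} recalled in the appendix (Theorem~\ref{thm:hess3} concerns $H^1_{n,m}$ and plays no role here), though you do also cite \cite{spechess}, so the needed ingredient is present.
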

We can now give a comparison between the usual Newton's method's quadratic convergence with the nonlocal counterpart.
\begin{theorem}\label{thm:newton2}
Assume the hypotheses of theorem \ref{thm:newton1}. Then, given $\epsilon>0$, there exists an $N>0$ so that for $n>N$, and $k$ large enough, we have \[\|x^{*}-x^{k,n}\|<M\|x^{*}-x^{k-1}\|^2+\epsilon,\]for some $M$ depending on the regularity of $u$.
\end{theorem}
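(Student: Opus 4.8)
The plan is to reduce Theorem \ref{thm:newton2} to the classical local quadratic convergence of Newton's method, together with Theorem \ref{thm:newton1}, glued by the triangle inequality. First I would recall that under the standing hypotheses --- $u\in C^2$ near the minimizer $x^*$, $(Hu(x^*))^{-1}$ invertible (so $(Hu(x))^{-1}$ is bounded and continuous on a neighborhood $U$ of $x^*$), and enough regularity of $f=(Hu)^{-1}\nabla u$ so that $f\in C^1_c(U)$ (in particular $Hu$ is Lipschitz on $U$ with some constant $L$), with the line search enforcing descent --- the \emph{local} Newton iterates $x^k$ converge to $x^*$, and there exist an index $K_0$ and a constant $M=M(u)$ (e.g. $M=\tfrac{1}{2} L\sup_{x\in U}\|(Hu(x))^{-1}\|$, using that the line-search stepsizes satisfy $\beta^k\to 1$ near $x^*$) such that
\[
\|x^*-x^k\|\le M\|x^*-x^{k-1}\|^2\qquad\text{for all }k\ge K_0.
\]
This is the standard estimate obtained by writing $x^k-x^* = (Hu^{k-1})^{-1}\int_0^1\big(Hu^{k-1}-Hu(x^*+t(x^{k-1}-x^*))\big)(x^{k-1}-x^*)\,dt$ and bounding the integrand through the Lipschitz control on $Hu$.

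Next I would fix $\epsilon>0$ and any $k\ge K_0$. Applying Theorem \ref{thm:newton1} with this $\epsilon$ and this $k$ yields $N>0$ such that $\|x^k-x^{k,n}\|<\epsilon$ for all $n>N$. Then, for $n>N$,
\[
\|x^*-x^{k,n}\|\le \|x^*-x^k\|+\|x^k-x^{k,n}\| < M\|x^*-x^{k-1}\|^2+\epsilon,
\]
which is exactly the asserted bound. Note that the quadratic term involves the \emph{local} iterate $x^{k-1}$, as in the statement, so no further comparison between $x^{k-1}$ and $x^{k-1,n}$ is required here.

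The routine parts are the triangle-inequality splitting and the invocation of Theorem \ref{thm:newton1}. The main obstacle --- and the only place genuine care is needed --- is the first step: verifying that the stated regularity hypotheses ($f\in C^1_c(U)$, $(Hu)^{-1}$ uniformly continuous on $U$, descent-enforcing line search) suffice to run the classical Newton quadratic-convergence argument. Specifically, one must (i) ensure the local iterates enter and remain in $U$ and converge to $x^*$ (from the descent property together with nonsingularity of $Hu(x^*)$), and (ii) justify that $\beta^k$ becomes, or can be taken to be, $1$ for $k$ large, so that the pure-Newton estimate applies for $k\ge K_0$; if one only has a uniform-continuity modulus for $Hu$ rather than a Lipschitz bound the estimate degrades to superlinear, so the exponent $2$ in the statement should be read as encoding the extra regularity absorbed into ``$M$ depending on the regularity of $u$.'' With that classical input secured, the conclusion is immediate.
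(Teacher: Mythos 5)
Your proof is correct and follows essentially the same route as the paper: a triangle-inequality split $\|x^*-x^{k,n}\|\le\|x^*-x^k\|+\|x^k-x^{k,n}\|$, the classical quadratic-convergence bound for the local Newton iterates, and Theorem \ref{thm:newton1} for the second term. The only difference is that the paper simply cites the standard literature for the local quadratic estimate, whereas you spell out the conditions (Lipschitz Hessian, stepsizes tending to $1$, iterates remaining in $U$) under which that estimate holds --- a reasonable elaboration, not a different argument.
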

\begin{proof}
We write $\|x^{*}-x^{k,n}\|\le\|x^{*}-x^{k}\|+\|x^{k}-x^{k,n}\|$. By the standard quadratic convergence properties of Newton's method (\cite{zak}), we have $\|x^{*}-x^{k}\|\le M\|x^{*}-x^{k-1}\|^2$ for suitable $M$ depending on the regularity of $u$. Next, by \ref{thm:newton1}, there is an $N>0$ such that for $n>N$, the second term $\|x^{k}-x^{k,n}\|<\epsilon$. Combining these two estimates, we conclude that $\|x^{*}-x^{k,n}\|<M\|x^{*}-x^{k-1}\|^2+\epsilon$, completing the proof.
\end{proof}
It is interesting to note that the ``error" in approximating the full Hessian by the nonlocal one has a clear effect in the convergence of the nonlocal Newton iterations. The term $M\|x^{*}-x^{k-1}\|^2$ in the proof of theorem \ref{thm:newton2} corresponds to the ``usual" quadratic convergence of the standard Newton iterations. However, the additional $\epsilon$ term corresponds to the error made in the approximation of the true Hessian by the nonlocal one. Indeed, $\epsilon$ can be made smaller by choosing a larger $n$ in the nonlocal Hessian. However, we are not guaranteed a clean quadratic convergence in the nonlocal case, indeed, the rate of convergence to $x^{*}$ is always limited by $\epsilon$, i.e., the effect of nonlocal interactions in computing the nonlocal Hessian.

\section{Concluding Remarks}\label{sec:conc}While we have developed basic first and second order nonlocal optimization methods, much remains to be done, both theoretically and numerically. For instance, one can transfer nonlocal optimization to regularized objective functions, constrained optimization and quasi-Newton methods. A bottleneck on the computational side is the quadrature used for integration. Sparse grid methods going back to Smolyak (\cite{smolyak}) can be employed for higher dimensional problems. Nonlocal gradients can also be incorporated into deep learning models during backpropagation and would be particularly useful for data that exhibits singular behavior. Our analysis assumes the parameter space $\Theta$ is Euclidean, and extending it to the case of $\Theta$ having nonzero curvature is a non-trivial open problem. Nevertheless, the results presented here can serve as a first step towards such expansions.

\bibliography{nll}

\begin{thebibliography}{10}

\bibitem{cells}
Mathematical models for cell migration: a nonlocal perspective.
\newblock {\em Philosophical Transactions of the Royal Society of London B:
  Biological Sciences}, 11 2019.

\bibitem{adams}
Robert~A. Adams and John J.~F. Fournier.
\newblock {\em Sobolev Spaces}.
\newblock Academic Press, 2 edition, Jun 2003.

\bibitem{peri4}
Bacim Alali and Robert Lipton.
\newblock Multiscale dynamics of heterogeneous media in the peridynamic
  formulation.
\newblock {\em Journal of Elasticity}, 106:71--103, 04 2010.

\bibitem{gunz3}
Bacim Alali, Kuo Liu, and Max Gunzburger.
\newblock A generalized nonlocal vector calculus.
\newblock {\em Zeitschrift für angewandte Mathematik und Physik}, 66, 03 2015.

\bibitem{diff}
Fuensanta Andreu-Vaillo, Jose~M. Mazon, Julio~D. Rossi, and J.~Julian
  Toledo-melero.
\newblock {\em Nonlocal Diffusion Problems}.
\newblock American Mathematical Society (Mathematical Surveys and Monographs
  (Book 165) ), USA, 2010.

\bibitem{atkinson}
Kendall~E. Atkinson.
\newblock {\em The Numerical Solution of Integral Equations of the Second
  Kind}.
\newblock Cambridge Monographs on Applied and Computational Mathematics.
  Cambridge University Press, 1997.

\bibitem{eps4}
A.~Auslender and M.~Teboulle.
\newblock Interior gradient and epsilon-subgradient descent methods for
  constrained convex minimization.
\newblock {\em Mathematics of Operations Research}, 29:1--26, 2004.

\bibitem{plastic}
Zdenek~P. Bazant and Milan Jirasek.
\newblock Nonlocal integral formulations of plasticity and damage: Survey of
  progress.
\newblock {\em Journal of Engineering Mechanics}, 128(11):1119--1149, 2002.

\bibitem{grad1}
C.~Bjorland, Luis Caffarelli, and Alessio Figalli.
\newblock Non-local gradient dependent operators.
\newblock {\em Advances in Mathematics}, 230:1859–1894, 07 2012.

\bibitem{brezis}
J.~Bourgain, H.~Brezis, and P.~Mironescu.
\newblock Another look at sobolev spaces.
\newblock {\em Optimal control and partial differential equations, IOS Press},
  A volume in honour of A. Benssoussan’s 60th birthday(Menaldi, J.L., et al.
  (eds.)):439--455, 2001.

\bibitem{nlm1}
A.~{Buades}, B.~{Coll}, and J.~. {Morel}.
\newblock A non-local algorithm for image denoising.
\newblock In {\em 2005 IEEE Computer Society Conference on Computer Vision and
  Pattern Recognition (CVPR'05)}, volume~2, pages 60--65 vol. 2, 2005.

\bibitem{chen}
Yuquan Chen, Qing Gao, Yiheng Wei, and Yong Wang.
\newblock Study on fractional order gradient methods.
\newblock {\em Applied Mathematics and Computation}, 314:310 -- 321, 2017.

\bibitem{zak}
Edwin K.~P. Chong and Stanislaw~H. Zak.
\newblock {\em An Introduction to Optimization, Fourth Edition}.
\newblock Wiley-Interscience Series in Discrete Mathematics and Optimization,
  John Wiley and Sons, Inc., USA, 2013.

\bibitem{crowd}
Rinaldo~M. Colombo and Magali Lecureux-Mercier.
\newblock Nonlocal crowd dynamics models for several populations.
\newblock {\em Acta Mathematica Scientia}, 32(1):177--196, 2012.
\newblock Mathematics Dedicated to professor Constantine M. Dafermos on the
  occasion of his 70th birthday.

\bibitem{phase}
Matteo Cozzi, Serena Dipierro, and Enrico Valdinoci.
\newblock Nonlocal phase transitions in homogeneous and periodic media.
\newblock {\em Journal of Fixed Point Theory and Applications}, 19:387--405, 11
  2016.

\bibitem{karni1}
Marta D'Elia, Mamikon Gulian, Hayley Olson, and George~Em Karniadakis.
\newblock A unified theory of fractional, nonlocal, and weighted nonlocal
  vector calculus.
\newblock {\em arXiv preprint, arXiv:2005.07686v2 [math.AP]}, 2020.

\bibitem{grimes}
David Donoho and Carrie Grimes.
\newblock Image manifolds which are isometric to euclidean space.
\newblock {\em Journal of Mathematical Imaging and Vision}, 23(1):5--24, 12
  2005.

\bibitem{dubook}
Qiang Du.
\newblock {\em Nonlocal Modeling, Analysis, and Computation}.
\newblock Society for Industrial and Applied Mathematics (CBMS-NSF Regional
  Conference Series in Applied Mathematics), USA, 2019.

\bibitem{gunz2}
Qiang Du, Max Gunzburger, R.~Lehoucq, and Kun Zhou.
\newblock Analysis and approximation of nonlocal diffusion problems with volume
  constraints.
\newblock {\em SIAM Review}, 54:667--696, 04 2011.

\bibitem{gunz1}
Qiang Du, Max Gunzburger, R.~Lehoucq, and Kun Zhou.
\newblock A nonlocal vector calculus, nonlocal volume constrained problems, and
  nonlocal balance laws.
\newblock {\em Mathematical Models and Methods in Applied Sciences},
  23(3):493--540, 2013.

\bibitem{creep}
Ravindra {Duddu} and Haim {Waisman}.
\newblock {A nonlocal continuum damage mechanics approach to simulation of
  creep fracture in ice sheets}.
\newblock {\em Computational Mechanics}, 51(6):961--974, June 2013.

\bibitem{peri5}
Etienne Emmrich, Richard Lehoucq, and Dimitri Puhst.
\newblock Peridynamics: A nonlocal continuum theory.
\newblock {\em Lecture Notes in Computational Science and Engineering}, 89, 09
  2013.

\bibitem{evans}
Lawrence~C. Evans.
\newblock {\em {Partial Differential Equations}}.
\newblock American Mathematical Society, 2 edition, 2010.

\bibitem{sier2}
Andrzej Fryszkowski.
\newblock {\em {Fixed Point Theory for Decomposable Sets (Topological Fixed
  Point Theory and Its Applications Book 2)}}.
\newblock Springer, 1 edition, 2004.

\bibitem{osher}
Guy Gilboa and Stanley~J. Osher.
\newblock Nonlocal operators with applications to image processing.
\newblock {\em Multiscale Modeling and Simulation}, 7(3):1005--1028, 2008.

\bibitem{eps2}
X.~L. Guo, C.~J. Zhao, and Z.~W. Li.
\newblock On generalized {$\epsilon$}-subdifferential and radial epiderivative
  of set-valued mappings.
\newblock {\em Optimization Letters}, 8(5):1707--1720, 2014.

\bibitem{fracture}
Milan Jirásek.
\newblock Nonlocal models for damage and fracture: Comparison of approaches.
\newblock {\em International Journal of Solids and Structures},
  35(31):4133--4145, 1998.

\bibitem{olav}
Olav Kallenberg.
\newblock {\em Foundations of Modern Probability}.
\newblock Probability and Its Applications. Springer 2nd edition, 2002.

\bibitem{grad2}
Hwi Lee and Qiang Du.
\newblock Nonlocal gradient operators with a nonspherical interaction
  neighborhood and their applications.
\newblock {\em ESAIM: Mathematical Modelling and Numerical Analysis}, 54, 08
  2019.

\bibitem{spechess}
Jan. Lellmann, Konstantinos. Papafitsoros, Carola. Schonlieb, and Daniel.
  Spector.
\newblock Analysis and application of a nonlocal hessian.
\newblock {\em SIAM Journal on Imaging Sciences}, 8(4):2161--2202, 2015.

\bibitem{fd}
Randall LeVeque.
\newblock {\em Finite Difference Methods for Ordinary and Partial Differential
  Equations: Steady-State and Time-Dependent Problems (Classics in Applied
  Mathematics Classics in Applied Mathemat)}.
\newblock Society for Industrial and Applied Mathematics, USA, 2007.

\bibitem{nlm3}
A.~{Maleki}, M.~{Narayan}, and R.~{Baraniuk}.
\newblock Suboptimality of nonlocal means on images with sharp edges.
\newblock In {\em 2011 49th Annual Allerton Conference on Communication,
  Control, and Computing (Allerton)}, pages 299--305, 2011.

\bibitem{nlm2}
Arian Maleki, Manjari Narayan, and Richard~G. Baraniuk.
\newblock Anisotropic nonlocal means denoising.
\newblock {\em Applied and Computational Harmonic Analysis}, 35(3):452 -- 482,
  2013.

\bibitem{gamma}
G.F~Dal Maso.
\newblock {\em Introduction to $\Gamma$-Convergence}.
\newblock Birkhauser (Progress in Nonlinear Differential Equations and Their
  Applications (8)), Jan 1993.

\bibitem{menspec}
Tadele Mengesha and Daniel Spector.
\newblock Localization of nonlocal gradients in various topologies.
\newblock {\em Calculus of Variations and Partial Differential Equations},
  52(1):253--279, Jan 2015.

\bibitem{eps1}
R.~Diaz Millan and M.~Penton Machado.
\newblock Inexact proximal {$\epsilon$}-subgradient methods for composite
  convex optimization problems.
\newblock {\em Journal of Global Optimization}, 75:1029--1060, 2019.

\bibitem{interaction}
Mario~Di Paola and Massimiliano Zingales.
\newblock Long-range cohesive interactions of non-local continuum faced by
  fractional calculus.
\newblock {\em International Journal of Solids and Structures},
  45(21):5642--5659, 2008.

\bibitem{pu}
Y.~{Pu}, J.~{Zhou}, Y.~{Zhang}, N.~{Zhang}, G.~{Huang}, and P.~{Siarry}.
\newblock Fractional extreme value adaptive training method: Fractional
  steepest descent approach.
\newblock {\em IEEE Transactions on Neural Networks and Learning Systems},
  26(4):653--662, 2015.

\bibitem{peri3}
Srujan Rokkam, Max Gunzburger, Michael Brothers, Nam Phan, and Kishan Goel.
\newblock A nonlocal peridynamics modeling approach for corrosion damage and
  crack propagation.
\newblock {\em Theoretical and Applied Fracture Mechanics}, 101:373 -- 387,
  2019.

\bibitem{russell}
David~L. Russell.
\newblock {\em Optimization theory}.
\newblock New York, W.A. Benjamin, 1970, Series Mathematics lecture note
  series., USA, 1970.

\bibitem{shai}
Shai Shalev-Shwartz and Shai Ben-David.
\newblock {\em Understanding Machine Learning: From Theory to Algorithms}.
\newblock Cambridge University Press, USA, 2014.

\bibitem{sheng}
Dian Sheng, Yiheng Wei, Yuquan Chen, and Yong Wang.
\newblock Convolutional neural networks with fractional order gradient method.
\newblock {\em Neurocomputing}, 2019.

\bibitem{shor}
N.Z. Shor, K.C. Kiwiel, and A.~Ruszczynski.
\newblock {\em Minimization Methods for Non-Differentiable Functions}.
\newblock Springer Series in Computational Mathematics. Springer Berlin
  Heidelberg, 2012.

\bibitem{sier1}
Waclaw Sierpinski.
\newblock Sur les fonctions d'ensemble additives et continues.
\newblock {\em Fundamenta Mathematicae}, 3:240--246, 1922.

\bibitem{peri2}
S.~Silling and R.~Lehoucq.
\newblock Convergence of peridynamics to classical elasticity theory.
\newblock {\em Journal of Elasticity}, 93:13--37, 10 2008.

\bibitem{peri1}
S.A. Silling.
\newblock Reformulation of elasticity theory for discontinuities and long-range
  forces.
\newblock {\em Journal of the Mechanics and Physics of Solids}, 48(1):175 --
  209, 2000.

\bibitem{smolyak}
S.~A. Smolyak.
\newblock Quadrature and interpolation formulas for tensor products of certain
  classes of functions.
\newblock {\em Dokl. Akad. Nauk SSSR}, 148:1042--1045, 1963.

\bibitem{eps3}
M.~V. Solodov and B.~F. Svaiter.
\newblock A hybrid approximate extragradient-proximal point algorithm using the
  enlargement of a maximal monotone operator.
\newblock {\em Set-Valued Analysis}, 7:323--345, 1999.

\bibitem{dnll}
Yunzhe Tao, Qi~Sun, Qiang Du, and Wei Liu.
\newblock Nonlocal neural networks, nonlocal diffusion and nonlocal modeling.
\newblock In {\em Proceedings of the 32nd International Conference on Neural
  Information Processing Systems}, NIPS’18, page 494–504, Red Hook, NY,
  USA, 2018. Curran Associates Inc.

\bibitem{wakin2}
M.~B. {Wakin}, D.~L. {Donoho}, {Hyeokho Choi}, and R.~G. {Baraniuk}.
\newblock High-resolution navigation on non-differentiable image manifolds.
\newblock In {\em Proceedings. (ICASSP '05). IEEE International Conference on
  Acoustics, Speech, and Signal Processing, 2005.}, volume~5, pages
  v/1073--v/1076 Vol. 5, March 2005.

\bibitem{wakin1}
Michael Wakin, David Donoho, Hyeokho Choi, and Richard Baraniuk.
\newblock The multiscale structure of non-differentiable image manifolds.
\newblock {\em Proc SPIE}, 5914, 08 2005.

\bibitem{nlm4}
J.~{Wang}, Y.~{Guo}, Y.~{Ying}, Y.~{Liu}, and Q.~{Peng}.
\newblock Fast non-local algorithm for image denoising.
\newblock In {\em 2006 International Conference on Image Processing}, pages
  1429--1432, 2006.

\bibitem{wang}
Jian Wang, Yanqing Wen, Yida Gou, Zhenyun Ye, and Hua Chen.
\newblock Fractional-order gradient descent learning of bp neural networks with
  caputo derivative.
\newblock {\em Neural Networks}, 89:19 -- 30, 2017.

\bibitem{wei}
Yiheng Wei, Yu~Kang, Weidi Yin, and Yong Wang.
\newblock Generalization of the gradient method with fractional order gradient
  direction.
\newblock {\em Journal of the Franklin Institute}, 357(4):2514 -- 2532, 2020.

\end{thebibliography}

\section{Appendix}\label{sec:A_aintro}
\subsection{Nonlocal Gradients and Hessians: More Background}
We provide here some more details on nonlocal gradients and Hessians. Recall that if $u\in C^{1}(\bar{\Omega})$, or more generally, as \cite{menspec} note (see also \cite{brezis}), if $u\in W^{1,p}(\Omega)$, then the map \[y \mapsto \frac{|u(y)-u(x)|}{\|x-y\|}\rho(x-y)\in L^1(\Omega)\] for a.e. $y\in \Omega$. Thus, in these cases, the principle value integral in equation \ref{eq:nlder2} exists and coincides with the Lebesgue integral of $\frac{u(x)-u(y)}{\|x-y\|}\frac{x-y}{\|x-y\|}\rho(x-y)$ over all of $\Omega$.

Indeed, Lemma $2.1$ of \cite{menspec} states:
\begin{proposition}(Lemma $2.1$ of \cite{menspec}) Let $u\in W^{1,p}(\Omega)$. Then $\|\nabla_{\rho}u\|_{L^{p}(\Omega)}\le C\|\rho\|_{L^{1}(\Omega)}\|\nabla u\|_{L^{p}(\Omega)},$ where $C=C(p,D,\Omega)$.\end{proposition}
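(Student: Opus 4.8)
The plan is to prove the bound first for $u\in C^{1}(\bar\Omega)$ (dense in $W^{1,p}(\Omega)$) and then pass to the general case by density, since once the inequality holds on a dense subspace the linear map $\nabla_\rho$ extends continuously from $W^{1,p}(\Omega)$ to $L^p(\Omega)$ with the same constant. The key geometric device is a bounded Sobolev extension operator $E:W^{1,p}(\Omega)\to W^{1,p}(\mathbb{R}^D)$ with $\|Eu\|_{W^{1,p}(\mathbb{R}^D)}\le C_E(p,D,\Omega)\|u\|_{W^{1,p}(\Omega)}$, which exists because $\partial\Omega$ is smooth and compact; this is where the dependence of $C$ on $\Omega$ enters. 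Since the integral defining $\nabla_\rho u$ only sees values of $u$ on $\Omega$, replacing $u$ by $Eu$ does not change $\nabla_\rho u$ on $\Omega$, so I may assume from the start that $u\in C^{1}(\mathbb{R}^D)$ and estimate $\|\nabla u\|_{L^p(\mathbb{R}^D)}\le C_E\|\nabla u\|_{L^p(\Omega)}$ at the end.

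First I would bound, using the triangle inequality for integrals and the fact (recorded in the excerpt) that the integrand is in $L^1$, and then substitute $z=x-y$:
\[
\|\nabla_\rho u(x)\| \le D\int_{\Omega}\frac{|u(x)-u(y)|}{\|x-y\|}\,\rho(x-y)\,dy
= D\int_{x-\Omega}\frac{|u(x)-u(x-z)|}{\|z\|}\,\rho(z)\,dz
\le D\int_{\mathbb{R}^D}\frac{|u(x)-u(x-z)|}{\|z\|}\,\rho(z)\,dz,
\]
the last step using $\rho\ge 0$ and that $u$ is now defined on all of $\mathbb{R}^D$. Next, writing $u(x)-u(x-z)=\int_0^1\nabla u(x-sz)\cdot z\,ds$ gives $|u(x)-u(x-z)|\le \|z\|\int_0^1\|\nabla u(x-sz)\|\,ds$, hence
\[
\|\nabla_\rho u(x)\| \le D\int_{\mathbb{R}^D}\int_0^1 \|\nabla u(x-sz)\|\,ds\,\rho(z)\,dz.
\]
Taking the $L^p(\Omega)$ norm in $x$ and applying Minkowski's integral inequality (with respect to the measure $\rho(z)\,dz\,ds$ on $\mathbb{R}^D\times[0,1]$) moves the norm inside:
\[
\|\nabla_\rho u\|_{L^p(\Omega)} \le D\int_{\mathbb{R}^D}\int_0^1 \big\|\,\nabla u(\cdot - sz)\,\big\|_{L^p(\Omega)}\,ds\,\rho(z)\,dz.
\]
For each fixed $s$ and $z$, translation invariance of Lebesgue measure gives $\|\nabla u(\cdot-sz)\|_{L^p(\Omega)}\le \|\nabla u\|_{L^p(\mathbb{R}^D)}\le C_E\|\nabla u\|_{L^p(\Omega)}$, a bound uniform in $s,z$; substituting back and using $\int_0^1 ds=1$ and $\int_{\mathbb{R}^D}\rho(z)\,dz=\|\rho\|_{L^1}$ yields $\|\nabla_\rho u\|_{L^p(\Omega)}\le D\,C_E\,\|\rho\|_{L^1}\,\|\nabla u\|_{L^p(\Omega)}$, i.e. $C=D\,C_E(p,D,\Omega)$. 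Density of $C^1(\bar\Omega)$ in $W^{1,p}(\Omega)$ then finishes the general case.

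The main obstacle is precisely the boundary behaviour: both the segment-integral identity $u(x)-u(x-z)=\int_0^1\nabla u(x-sz)\cdot z\,ds$ and the translation estimate require $\nabla u$ to be controlled on points $x-sz$ that may lie outside $\Omega$, which is why the Sobolev extension operator is essential and why $C$ must depend on the geometry of $\partial\Omega$. The case $p=\infty$ is simpler, as one bounds the inner $z,s$-integral directly by $\|\nabla u\|_{L^\infty}\int_{\mathbb{R}^D}\rho$ without invoking Minkowski; the case $p=1$ only needs Tonelli's theorem in place of Minkowski's inequality. (When $\rho\in\mathcal P$ one has $\|\rho\|_{L^1}=1$, but the estimate above is stated and proved for a general nonnegative $\rho\in L^1$.)
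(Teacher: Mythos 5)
The paper itself does not prove this statement: it is quoted as Lemma 2.1 of \cite{menspec}, so your proposal should be judged as a reconstruction of that lemma's proof. Your route --- extend $u$ to $\mathbb{R}^D$ by a bounded Sobolev extension operator, bound $\|\nabla_\rho u(x)\|$ by $D\int \frac{|u(x)-u(x-z)|}{\|z\|}\rho(z)\,dz$, write $u(x)-u(x-z)=\int_0^1\nabla u(x-sz)\cdot z\,ds$, and finish with Minkowski's integral inequality and translation invariance --- is the standard argument for estimates of this type and is essentially how the cited lemma is proved; it also correctly locates the source of the dependence of $C$ on $\Omega$.

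Two steps need repair, one of them a genuine error as written. The inequality $\|\nabla u\|_{L^p(\mathbb{R}^D)}\le C_E\|\nabla u\|_{L^p(\Omega)}$ for the extended function is false: a bounded extension operator controls $\|\nabla(Eu)\|_{L^p(\mathbb{R}^D)}$ only through the full norm $\|u\|_{W^{1,p}(\Omega)}$, and if $u$ is a nonzero constant then $Eu\in W^{1,p}(\mathbb{R}^D)$ cannot be constant, so the left side is positive while the right side vanishes. The fix is short: $\nabla_\rho$ annihilates constants, so replace $u$ by $u-\bar u$ (its mean over $\Omega$), apply the Poincar\'e--Wirtinger inequality on the bounded connected $\Omega$ to get $\|u-\bar u\|_{W^{1,p}(\Omega)}\le (1+C_P)\|\nabla u\|_{L^p(\Omega)}$, and only then extend; the constant remains of the form $C(p,D,\Omega)$. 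Second, the closing density step is too quick: the bound on $C^1(\bar\Omega)$ yields a continuous linear extension of $\nabla_\rho$ to $W^{1,p}(\Omega)$, but you must still identify that abstract extension with the integral formula defining $\nabla_\rho u$. This can be done by Fatou's lemma applied to $\int_\Omega\int_\Omega \frac{|u(x)-u(y)|^p}{\|x-y\|^p}\rho(x-y)\,dy\,dx$ along smooth $u_k\to u$ in $W^{1,p}$ and a.e., followed by Jensen/Minkowski for the pointwise bound; alternatively, skip the reduction entirely, since the segment identity holds for a.e.\ $x$ when $u\in W^{1,p}(\mathbb{R}^D)$ by absolute continuity on lines. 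Finally, a cosmetic point: your argument produces $\|\rho\|_{L^1(\mathbb{R}^D)}$ rather than the $\|\rho\|_{L^1(\Omega)}$ appearing in the transcription; for the normalized radial kernels $\rho\in\mathcal{P}$ used throughout the paper both quantities equal $1$, so nothing is lost where the lemma is invoked.
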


Recall that we write $\nabla_{\rho}^{\Omega^{*}}u(x)$ to denote the corresponding nonlocal gradient on $\Omega^{*} \subset \Omega$:
\begin{equation}\nabla_{\rho}^{\Omega^{*}}u(x):= D \int_{\Omega^{*}}\frac{u(x)-u(y)}{\|x-y\|}\frac{x-y}{\|x-y\|}\rho(x-y)dy.\label{eq:A_anlder3}\end{equation}

If the sequence of $\rho_n$ approaches a limit, albeit perhaps as distributions, we can expect the nonlocal gradients $\nabla_n$ to converge to a limiting operator as well. For example, as we remarked earlier, in case the $\rho_n$ approach $\delta_0$, we can presume that the $\nabla_n$ approach $\nabla$ as well. The mode and topology of convergence of the sequence $\nabla_n u$ to $\nabla u$ as $n\rightarrow \infty$ has been investigated by many authors. In \cite{osher}, it is formally shown that \[\frac{\partial_{n}u}{\partial_{n}x_i}(x) = \frac{\partial u}{\partial x_i}(x) + error, \]
while \cite{gunz1} show for $u\in H^1(\mathbb{R}^D)$ (or more generally for $H^1$ tensors) the convergence of the nonlocal gradient in $L^2(\Omega)$:
\[\nabla_n u \rightarrow \nabla u \text{ in $L^2(\Omega,\mathbb{R}^D)$ as }n\rightarrow \infty.\]
The localization of nonlocal gradients in multiple different topologies has been studied in \cite{menspec}, and we restate the following theorem from \cite{menspec}.
\begin{theorem}(Theorem $1.1$ of \cite{menspec}) \begin{itemize}\item Let $u\in C^{1}(\bar{\Omega})$. Then $\nabla_{n}u \rightarrow \nabla u$ locally uniformly as $n\rightarrow \infty$. If $u\in C_{c}^{1}(\Omega)$, then the convergence is uniform. \item Let $u\in W^{1,p}(\Omega)$. Then $\nabla_{n}u \rightarrow \nabla u$ in $L^p(\Omega,\mathbb{R}^D)$as $n\rightarrow \infty$.\end{itemize}\end{theorem}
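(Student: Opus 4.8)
The plan is to establish the $C^{1}$ localization directly, from a change of variables plus a second moment identity for radial densities, and then to deduce the $W^{1,p}$ localization by a uniform boundedness and density argument. For $u\in C^{1}(\bar\Omega)$, fix a compact $V\subset\Omega$ and put $d=\mathrm{dist}(V,\partial\Omega)>0$. Substituting $y=x+h$ in \ref{eq:nlder2} and using $\rho_{n}(-h)=\rho_{n}(h)$ gives, for $x\in V$, $\nabla_{n}u(x)=D\int_{\Omega-x}\frac{h\,(u(x+h)-u(x))}{\|h\|^{2}}\rho_{n}(h)\,dh$, where $\Omega-x:=\{h:x+h\in\Omega\}$. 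The algebraic heart is the identity $D\int_{\mathbb{R}^{D}}\frac{h\otimes h}{\|h\|^{2}}\rho(h)\,dh=I_{D}$, valid for every $\rho\in\mathcal{P}$ (the off-diagonal moments vanish by reflecting a single coordinate; the diagonal ones are all equal and sum to $\int\rho=1$), which lets me write $\nabla u(x)=D\int_{\mathbb{R}^{D}}\frac{(h\otimes h)\nabla u(x)}{\|h\|^{2}}\rho_{n}(h)\,dh$.

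Given $\epsilon>0$ I would pick $\delta\in(0,d)$ with $\|\nabla u(z)-\nabla u(x)\|<\epsilon$ whenever $z,x\in\bar\Omega$ and $\|z-x\|\le\delta$ (uniform continuity of $\nabla u$ on the compact set $\bar\Omega$), and split both integrals at $\|h\|=\delta$. On $\{\|h\|>\delta\}$ the integrands are bounded (by a Lipschitz constant of $u$ on $\bar\Omega$, resp.\ by $\|\nabla u\|_{\infty}$), so the corresponding pieces are at most a constant multiple of $\int_{\|h\|>\delta}\rho_{n}$, which $\to0$ by the concentration hypothesis in \ref{eq:rho}. On $\{\|h\|\le\delta\}$, which is contained in $\Omega-x$ because $\delta<d$, the segment $[x,x+h]$ stays in $\Omega$, so $u(x+h)-u(x)=\int_{0}^{1}\nabla u(x+th)\cdot h\,dt$ turns the difference of the near parts into $D\int_{\|h\|\le\delta}\int_{0}^{1}\frac{(h\otimes h)[\nabla u(x+th)-\nabla u(x)]}{\|h\|^{2}}\rho_{n}\,dt\,dh$, whose norm is $\le D\epsilon$ since $\|(h\otimes h)v\|\le\|h\|^{2}\|v\|$. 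Hence $\sup_{x\in V}\|\nabla_{n}u(x)-\nabla u(x)\|\to0$, which is locally uniform convergence; for $u\in C^{1}_{c}(\Omega)$ one extends $u$ by zero to a function in $C^{1}_{c}(\mathbb{R}^{D})$, so $\nabla u$ is uniformly continuous on all of $\mathbb{R}^{D}$, every segment is admissible, the discrepancy between $\int_{\Omega}$ and $\int_{\mathbb{R}^{D}}$ is negligible because $u$ vanishes off $\Omega$, and running the argument with $V=\bar\Omega$ yields uniform convergence.

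For $u\in W^{1,p}(\Omega)$ I would argue by density. The a priori bound recalled above (Lemma $2.1$ of \cite{menspec}) reads $\|\nabla_{n}u\|_{L^{p}(\Omega)}\le C\|\nabla u\|_{L^{p}(\Omega)}$ with $C$ independent of $n$ since $\|\rho_{n}\|_{L^{1}}=1$, so the operators $u\mapsto\nabla_{n}u-\nabla u$ are uniformly bounded from $W^{1,p}(\Omega)$ into $L^{p}(\Omega,\mathbb{R}^{D})$. On the dense subspace $C^{\infty}(\bar\Omega)\subset W^{1,p}(\Omega)$ (density holds since $\partial\Omega$ is smooth; see \cite{adams}), each $v$ is Lipschitz on $\bar\Omega$, so $\|\nabla_{n}v(x)\|\le D\int\frac{|v(x+h)-v(x)|}{\|h\|}\rho_{n}(h)\,dh$ is bounded by a constant multiple of $\|\nabla v\|_{\infty}$; that constant lies in $L^{p}(\Omega)$ because $\Omega$ is bounded, so the pointwise convergence from the first step together with dominated convergence gives $\nabla_{n}v\to\nabla v$ in $L^{p}(\Omega,\mathbb{R}^{D})$. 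For general $u$ and $\epsilon>0$, picking $v\in C^{\infty}(\bar\Omega)$ with $\|u-v\|_{W^{1,p}}<\epsilon$ and estimating $\|\nabla_{n}u-\nabla u\|_{L^{p}}\le\|\nabla_{n}(u-v)\|_{L^{p}}+\|\nabla_{n}v-\nabla v\|_{L^{p}}+\|\nabla v-\nabla u\|_{L^{p}}\le(C+1)\epsilon+\|\nabla_{n}v-\nabla v\|_{L^{p}}$ and then letting $n\to\infty$ finishes the proof.

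The hard part will be the $C^{1}$ estimate, and specifically the bookkeeping at $\partial\Omega$: one must keep the cutoff radius $\delta$ below the distance from the compact set $V$ to $\partial\Omega$ so that both the uniform continuity of $\nabla u$ and the admissibility of the segments $[x,x+h]$ are available, and one must check that the part of the radial mass ``lost'' because the nonlocal integral runs over $\Omega$ rather than $\mathbb{R}^{D}$ is absorbed into the concentration hypothesis \ref{eq:rho} uniformly on $V$. It is precisely this interplay that degenerates as $x\to\partial\Omega$ and forces the conclusion to be only locally uniform for general $u\in C^{1}(\bar\Omega)$, the zero-extension in the compactly supported case being exactly what removes the obstruction. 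Once the $C^{1}$ case is in hand, the $W^{1,p}$ case is soft, resting only on the uniform bound of \cite{menspec} and density of smooth functions.
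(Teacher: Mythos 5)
Your argument is essentially correct, but be aware that the paper does not prove this statement at all: it is quoted verbatim as Theorem 1.1 of \cite{menspec} and used as an imported tool throughout (e.g., in the proofs of Proposition \ref{thm:taylor1}, Theorems \ref{thm:suboptimalgd}, \ref{thm:optimalgd}, \ref{thm:hess1a}). What you have written is in effect a self-contained reconstruction of the Mengesha--Spector localization proof: the substitution $y=x+h$, the exact second-moment identity $D\int_{\mathbb{R}^D}\frac{h\otimes h}{\|h\|^2}\rho(h)\,dh=I_D$ for radial $\rho$, the split at $\|h\|=\delta<\mathrm{dist}(V,\partial\Omega)$ with uniform continuity of $\nabla u$ controlling the near region and the concentration condition in \ref{eq:rho} killing the far region, and then the uniform a priori bound (Lemma 2.1 of \cite{menspec}) plus density of smooth functions for the $W^{1,p}$ case. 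Where the paper does carry out an argument of this type (the proof of Theorem \ref{thm:hess3}), it keeps the domain-restricted constants $c_j^m(x)$ of Lemma \ref{thm:menLemma} and uses their uniform convergence to $1/D$ on compact sets, whereas you use the full-space moment identity and push the discrepancy between $\Omega-x$ and $\mathbb{R}^D$ into the far region; for $x$ ranging in a compact subset of $\Omega$ the two devices are interchangeable, and your version is slightly cleaner. Three points worth tightening: (i) the second bullet needs $1\le p<\infty$, since smooth functions are not dense in $W^{1,\infty}(\Omega)$ --- this matches the scope of the quoted theorem (compare the analogous $p<\infty$ restriction in the quoted Hessian result); (ii) the claim that $v\in C^{\infty}(\bar\Omega)$ is Lipschitz on $\bar\Omega$ with constant comparable to $\|\nabla v\|_{L^\infty}$ uses quasiconvexity of $\Omega$, which is available here because $\partial\Omega$ is smooth and compact, but should be said; (iii) in the $C^{1}_{c}(\Omega)$ case the discrepancy between $\int_{\Omega}$ and $\int_{\mathbb{R}^D}$ is uniformly negligible because on the discrepancy set either $u(x)=0$ or $\mathrm{dist}(x,\partial\Omega)\ge \mathrm{dist}(\mathrm{supp}(u),\partial\Omega)>0$ forces $\|h\|$ to be bounded below, so the concentration hypothesis applies uniformly; you gesture at this but it deserves one explicit line.
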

We record the following theorem from \cite{spechess} regarding a similar result in the Hessian case.
\begin{theorem}(Theorems $1.4$ and $1.5$ of \cite{spechess}) \begin{itemize}\item Let $u\in C_{c}^{2}(\mathbb{R}^D)$, then $H_{n}^{4}u\rightarrow Hu$ in $L^p(\mathbb{R}^D,\mathbb{R}^{D\times D})$ as $n\rightarrow \infty$ for $1\le p \le \infty$. \item Let $u\in W^{2,p}(\Omega)$, then $H_{n}^{4}u\rightarrow Hu$ in $L^p(\mathbb{R}^D,\mathbb{R}^{D\times D})$ $n\rightarrow \infty$ for $1\le p < \infty$.\end{itemize}\end{theorem}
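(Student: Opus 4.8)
The plan is to treat the two regularity classes separately, using a second-order Taylor expansion as the common engine and exploiting the fact that each $\rho_n$ is a radial probability density concentrating at the origin. First I would handle $u\in C_c^2(\mathbb{R}^D)$ directly and pointwise, and then bootstrap to $u\in W^{2,p}(\Omega)$ via a uniform-in-$n$ operator bound together with density of $C_c^2$. For the $C_c^2$ case, write $D^2_h u(x):=u(x+h)-2u(x)+u(x-h)$; Taylor's theorem gives $D^2_h u(x)=h^T Hu(x)\,h+E(x,h)$, where $|E(x,h)|\le \|h\|^2\,\omega_{Hu}(\|h\|)$ with $\omega_{Hu}$ the modulus of continuity of $Hu$. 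Since $Hu$ is uniformly continuous (compact support), $E(x,h)=o(\|h\|^2)$ uniformly in $x$. Substituting into $(H_n^4 u)_{i,j}$ splits it into a main term carrying $h^T Hu(x)\,h$ and an error term carrying $E(x,h)$.

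The heart of the main term is a moment computation. Writing $h=r\omega$ with $\omega\in S^{D-1}$, every factor other than $\rho_n$ is homogeneous of degree zero in $h$, so radiality $\rho_n=\hat{\rho}_n(\|h\|)$ factors the main term into a radial integral $\int_0^\infty \hat{\rho}_n(r)r^{D-1}\,dr$ and an angular integral over $S^{D-1}$. The normalization $\int_{\mathbb{R}^D}\rho_n=1$ forces the radial integral to equal $1/|S^{D-1}|$ for every $n$, so the main term is independent of $n$. The angular part is then evaluated using the sphere moments $\tfrac{1}{|S^{D-1}|}\int_{S^{D-1}}\omega_i\omega_j\,d\omega=\tfrac{\delta_{ij}}{D}$ and $\tfrac{1}{|S^{D-1}|}\int_{S^{D-1}}\omega_i\omega_j\omega_k\omega_l\,d\omega=\tfrac{\delta_{ij}\delta_{kl}+\delta_{ik}\delta_{jl}+\delta_{il}\delta_{jk}}{D(D+2)}$; a short contraction, using symmetry of $Hu$, shows the trace terms produced by $\omega_i\omega_j$ cancel exactly against the $-\tfrac{\|h\|^2}{D+2}I_D$ correction, and that the normalizing constant is calibrated precisely so that the main term equals $(Hu(x))_{i,j}$ for every $n$. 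Hence $(H_n^4 u)_{i,j}(x)-(Hu(x))_{i,j}$ reduces to the error term alone.

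It then remains to drive the error term to zero. Since the angular factor $\|h\|^{-2}(h_ih_j-\tfrac{\|h\|^2}{D+2}\delta_{ij})$ is bounded by a dimensional constant $C_D$, I would bound the error by $C_D\int_{\mathbb{R}^D}\|h\|^{-2}|E(x,h)|\rho_n(h)\,dh$ and split at $\|h\|=\delta$: on $\|h\|\le\delta$ the estimate $|E|\le\epsilon\|h\|^2$ (valid for $\delta$ small and all $n$) gives a contribution $\le C_D\epsilon\int\rho_n\le C_D\epsilon$, while on $\|h\|>\delta$ boundedness of $u$ makes $\|h\|^{-2}|E|$ uniformly bounded and $\int_{\|h\|>\delta}\rho_n\to0$ by the third property in \ref{eq:rho}. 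This yields $\|H_n^4 u-Hu\|_{L^\infty}\to0$. For $1\le p<\infty$ I would additionally observe that for $x$ far from $\mathrm{supp}(u)$ the second difference $D^2_h u(x)$ vanishes unless $\|h\|\ge \mathrm{dist}(x,\mathrm{supp}\,u)$, so $(H_n^4 u)(x)$ decays as $\|x\|\to\infty$; this decay makes the $L^p$ tail integrable and small, and combined with uniform smallness on a large ball it gives $L^p$ convergence.

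For $u\in W^{2,p}(\Omega)$ the pointwise Taylor argument fails, so I would instead establish a uniform-in-$n$ operator bound. Using the integral-remainder identity $D^2_h u(x)=\int_0^1(1-s)\,h^T[Hu(x+sh)+Hu(x-sh)]\,h\,ds$, the bounded angular factor, Minkowski's integral inequality, and translation invariance of the $L^p$ norm, one obtains $\|H_n^4 u\|_{L^p}\le C_D\|Hu\|_{L^p}$ uniformly in $n$ (the exact analog of Lemma $2.1$ of \cite{menspec} for the gradient). Density of $C_c^2$ in $W^{2,p}$ then closes the argument via a three-term split: approximate $u$ by $v\in C_c^2$ and bound $\|H_n^4(u-v)\|_{L^p}+\|H_n^4 v-Hv\|_{L^p}+\|Hv-Hu\|_{L^p}$, where the outer terms are small by the uniform bound and density while the middle term vanishes by the $C_c^2$ case. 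The main obstacles I anticipate are (i) the bookkeeping in the moment contraction needed to confirm that the normalizing constant reproduces $(Hu)_{i,j}$ exactly, and (ii) the $L^p$ tail control on the unbounded domain $\mathbb{R}^D$, where uniform pointwise smallness does not by itself give $L^p$ smallness and the decay estimate for large $\|x\|$ is essential.
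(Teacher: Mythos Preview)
The paper does not supply a proof of this statement: it is quoted verbatim as Theorems~1.4 and~1.5 of \cite{spechess} and merely ``recorded'' for later use. There is therefore no paper proof to compare against; your proposal is effectively an independent reconstruction of the argument in \cite{spechess}.

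On its merits, your outline is the standard one and is essentially correct. The key steps---Taylor expansion of the symmetric second difference, factoring the main term via radiality into a radial piece fixed by $\int\rho_n=1$ and an angular piece evaluated by the second and fourth spherical moments, the $\delta$-split to kill the error using the concentration property of $\rho_n$, and the passage to $W^{2,p}$ via a uniform $L^p$ operator bound plus density---are exactly the ingredients used in \cite{spechess}. Two minor remarks: (i) your moment contraction will reproduce $(Hu)_{i,j}$ only if the normalizing constant is $\tfrac{D(D+2)}{2}$; the paper's display of $H_n^4$ shows $\tfrac{D(D+1)}{2}$, which appears to be a transcription slip (the $-\tfrac{\|h\|^2}{D+2}I_D$ correction already signals $D+2$), so do not be alarmed when your computation does not match the printed constant; (ii) for the $L^p$ tail in the $C_c^2$ case, the cleanest route is not a pointwise decay estimate but simply to note that for $\|h\|\le\delta$ the support of $x\mapsto D_h^2 u(x)$ lies in a fixed $\delta$-neighborhood of $\mathrm{supp}\,u$, so the near-origin contribution is compactly supported uniformly in $n$, and the far contribution is $L^\infty$-small times a finite-measure set plus a vanishing tail from $\int_{\|h\|>\delta}\rho_n$. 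This avoids having to quantify decay of $H_n^4 u(x)$ at infinity.
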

Note that in the $H^4_{n}$ Hessian case, the authors of \cite{spechess} define and analyze the Hessian defined on all of $\mathbb{R}^D$, due to the definition of $H^4_{n}$ involving translation terms $u(x)\mapsto u(x+h)$, yet still obtain convergence both in $L^p(\mathbb{R}^D)$ and uniform $L^{\infty}(\mathbb{R}^D)$ norms. In section \ref{sec:A_ahessian}, we will show weak convergence of the remaining Hessians $H_{n,m}^{1},H_{n}^{2}$ and $H_{n}^{3}$ to $H$ as $m,n\rightarrow \infty$. The above theorems will be one of the main results we draw upon in our analysis. In the sequel, when we use the adjective ``the" nonlocal gradient $\nabla_n$ or ``the" nonlocal Hessians $H^{k}_{n}$, with $k=1,\ldots,4$, we mean the ones defined above associated with a fixed sequence $\rho_n(\cdot)$.

\subsection{Proofs}
%%%%%%%%%%%THM 1 PROOF%%%%%%%%%%%%%%%%%%%%%%%
\subsection{Proof of Theorem \ref{thm:order1}}
In order to prove theorem \ref{thm:order1}, we will need the following lemma.
%%%%%%%%%%%LEMMA 1 PROOF%%%%%%%%%%%%%%%%%%%%%%%
\begin{lemma}
Let $(X,\mu,\sigma(\mathcal{X}))$ be a $\sigma$-finite measure space, i.e., $X$ is a set, $\sigma(\mathcal{X})$ is a $\sigma$-algebra generated by $\mathcal{X}$, a collection of subsets  of $X$, and $\mu$ a $\sigma$-finite measure on $X$. Assume $\mu(X)<\infty$. Let $f\in L^{1}_{+}(\mu)$, i.e., $f$ is $\mu$-integrable, with $f \ge 0$ almost everywhere. Then for any $M$ with $0\le M \le \int_{X}fd\mu$, there is an $A\in \sigma(\mathcal{X})$ such that $\int_A f d\mu = M$.
\label{thm:lemma1}\end{lemma}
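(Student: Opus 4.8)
The plan is to recognize this as a one-dimensional instance of the Lyapunov/Sierpiński convexity phenomenon. The set function $\nu(A):=\int_A f\,d\mu$ is a \emph{finite} measure (because $f\in L^1(\mu)$), it is absolutely continuous with respect to $\mu$, and — this is the feature that actually makes the statement work — it is \emph{non-atomic}, which holds because $\mu$ is non-atomic (in the case of interest, $\mu$ is Lebesgue measure on $\Omega\subset\mathbb{R}^D$, which certainly is). The goal then reduces to showing that the range of $\nu$ is the whole interval $[0,\nu(X)]=\bigl[0,\int_X f\,d\mu\bigr]$, so that in particular every $M$ in that interval equals $\nu(A)=\int_A f\,d\mu$ for some $A\in\sigma(\mathcal{X})$.

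First I would record the divisibility property that encapsulates non-atomicity: \emph{for every $B\in\sigma(\mathcal{X})$ with $\nu(B)>0$ and every $\epsilon>0$ there is $C\subseteq B$ with $0<\nu(C)\le\epsilon$}. To see this, note $\nu$ itself is non-atomic: if $\nu(E)>0$ then $\mu\bigl(E\cap\{f>0\}\bigr)>0$, and splitting that set into two pieces of positive $\mu$-measure (possible since $\mu$ is non-atomic and $\mu(X)<\infty$) yields two subsets of $E$ of positive and strictly smaller $\nu$-measure. Iterating the split on $B$, always retaining the piece of smaller $\nu$-measure, produces $B\supseteq B_1\supseteq B_2\supseteq\cdots$ with $0<\nu(B_m)\le 2^{-m}\nu(B)$, and any $B_m$ with $2^{-m}\nu(B)\le\epsilon$ does the job.

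The main step is a greedy exhaustion. Set $A_0=\emptyset$; given $A_k$ with $\nu(A_k)\le M$, stop if $\nu(A_k)=M$, and otherwise let $t_k:=\sup\{\nu(C):C\subseteq X\setminus A_k,\ \nu(C)\le M-\nu(A_k)\}$, which is strictly positive by the divisibility property applied to $X\setminus A_k$ (this set has $\nu$-measure $\nu(X)-\nu(A_k)\ge M-\nu(A_k)>0$). Choose $C_k\subseteq X\setminus A_k$ with $\nu(C_k)\le M-\nu(A_k)$ and $\nu(C_k)>t_k/2$, and put $A_{k+1}:=A_k\cup C_k$, so that $\nu(A_{k+1})=\nu(A_k)+\nu(C_k)\in\bigl(\nu(A_k)+t_k/2,\ M\bigr]$. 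Then $\{\nu(A_k)\}$ is nondecreasing and bounded by $M$, hence converges to some $L\le M$, and since the increments $\nu(C_k)=\nu(A_{k+1})-\nu(A_k)$ sum to $L<\infty$ they tend to $0$, forcing $t_k\to 0$. Set $A:=\bigcup_k A_k\in\sigma(\mathcal{X})$, so $\nu(A)=L$. If $L<M$, then $X\setminus A$ has $\nu$-measure $\nu(X)-L\ge M-L>0$, so divisibility gives $C^{*}\subseteq X\setminus A$ with $0<\nu(C^{*})\le M-L$; since $C^{*}\subseteq X\setminus A_k$ and $\nu(C^{*})\le M-L\le M-\nu(A_k)$ for every $k$, this $\nu(C^{*})$ is admissible in the supremum defining $t_k$, whence $t_k\ge\nu(C^{*})>0$ for all $k$, contradicting $t_k\to 0$. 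Therefore $L=M$, i.e. $\int_A f\,d\mu=\nu(A)=M$, proving the lemma.

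I expect the main obstacle to be the bookkeeping in the exhaustion — in particular verifying that the construction ``closes up'' in the limit — together with the (necessary) observation that the induced measure $\nu$ must be non-atomic; everything else is routine countable additivity and reverse/forward continuity of $\nu$. An alternative would be to invoke Sierpiński's theorem on the range of a non-atomic finite measure directly, but since this appendix builds such facts from scratch the self-contained argument above seems preferable.
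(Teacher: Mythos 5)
Your proof is correct, and it takes a more self-contained route than the paper. The paper's own argument is essentially two lines: it defines the same induced measure $\lambda(A)=\int_A f\,d\mu$, asserts that it is non-atomic, and then invokes Sierpi\'nski's theorem that a finite non-atomic measure attains every value in $[0,\lambda(X)]$. You perform the identical reduction (your $\nu$ is the paper's $\lambda$) but then \emph{prove} the interval-range property from scratch: first the divisibility property of $\nu$ (subsets of arbitrarily small positive $\nu$-measure inside any set of positive $\nu$-measure), then a greedy exhaustion whose increments are forced to zero, with the divisibility property ruling out stopping short of $M$. What the paper's route buys is brevity, at the price of an external citation and of the unjustified sentence ``since $f\in L^{1}_{+}(\mu)$, $\lambda$ is non-atomic'' --- integrability alone does not give non-atomicity (take $\mu$ a point mass and $f\equiv 1$), so the lemma as literally stated needs $\mu$ non-atomic. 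Your write-up is more honest on exactly this point: you identify non-atomicity of $\mu$ as the real hypothesis and note that it holds in the intended application, where $\mu$ is Lebesgue measure restricted to a subset of $\Omega$, and your splitting argument ($\mu(E\cap\{f>0\})>0$, split by non-atomicity of $\mu$, both halves have positive $\nu$-measure) is the correct way to transfer non-atomicity from $\mu$ to $\nu$. So the two proofs share the same skeleton; yours additionally supplies the proof of the Sierpi\'nski-type step the paper cites, and makes explicit a hypothesis the paper glosses over.
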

\begin{proof}
The proof relies on Sierpinski's theorem on non-atomic measures (see \cite{sier1},\cite{sier2}). Let $M_f = \int_X f d\mu$. Define $\lambda(A)=\int_A f d\mu$ for any $A\in \sigma(\mathcal{X})$. Since $f \in L^{1}_{+}(\mu)$, we can conclude that $\lambda:\sigma(\mathcal{X})\rightarrow [0,M_f]$ is a non-atomic measure. By Sierpinski's theorem, $\lambda$ attains its maximum and minimum. Thus, for any $M\in [0,M_f]$ there exists an $A\in \sigma(\mathcal{X})$ with $\lambda(A) = \int_A f d\mu = M.$
\end{proof}

%%%%%%%%%%%END LEMMA 1 PROOF%%%%%%%%%%%%%%%%%%%%%%%

We now prove theorem \ref{thm:order1}.
\begin{proof}
Define for for each $i=1,\ldots,D$ and any $x\in \Omega$\begin{equation}k_{u,i}^{n}(x^*,x) = D\frac{u(x^*)-u(x)}{\|x^*-x\|}\frac{x^{*}_{i}-x^{*}_{i}}{\|x^*-x\|}\rho_{n}(x^{*}-x), 
\end{equation}
and
\begin{equation}
\begin{cases}
    B_{i}^{+}&=\{x\in \mathbb{R}^{D}:x_{i}^{*} \le x_{i}\},\\
     B_{i}^{-}&=\{x\in \mathbb{R}^{D}:x_{i}^{*} > x_{i}\}.\\
\end{cases}
\end{equation}

Notice that $B_{i}^{+} \cap B_{i}^{-} =\phi$ and that $x^{*} \in B_{i}^{+}$. Now, by assumption, $u\in W^{1,p}(\Omega)$ and hence, $k_{u,i}^{n}(x^*,x) $ is integrable with respect to $x$, i.e., $k_{u,i}^{n}(x^*,\cdot) \in L^{1}(\Omega)$. Thus, for any $\epsilon>0$ there is a $\delta>0$ such that for any ball $B_{\delta}$ with volume $vol(B_{\delta})<\delta$ we have \[|\int_{B_{\delta}} k_{u,i}^{n}(x^*,x) dx|\le \int_{B_{\delta}} |k_{u,i}^{n}(x^*,x)| dx < \epsilon.\]
Pick such a ball $B_{\delta}\subset \Omega$ with $x^{*} \in B_{\delta}$. Define 
\begin{equation}
\begin{cases}
    B_{\delta, i}^{+}&=B_{\delta}\cap B_{i}^{+},\\
     B_{\delta, i}^{-}&=B_{\delta}\cap B_{i}^{-}.\\
\end{cases}
\end{equation}
Note that $x^{*} \in B_{\delta, i}^{+}$ and that $B_{\delta} = B_{\delta, i}^{+} \cup B_{\delta, i}^{-}$.

It is easy to see, based on how we have set things up, that for varying $x \in B_{\delta}$ we have that $k_{u,i}^{n}(x^*,x) \le 0 \text{ a.e.}$ on $B_{\delta, i}^{-}$ while $k_{u,i}^{n}(x^*,x) \ge 0 \text{ a.e.}$ on $B_{\delta, i}^{+}$. Indeed, since $x^{*}$ is an a.e. local minimum, $u(x^*)-u(x)\le 0$ a.e., the sign of $k_{u,i}^{n}(x^*,x)$ is determined by that of $x^{*}_{i}-x^{*}_{i}$. On $B_{\delta, i}^{+}$, we have that $x^{*}_{i}-x^{*}_{i} \le 0 $ so that $k_{u,i}^{n}(x^*,x) \ge 0$ a.e. while on $B_{\delta, i}^{-}$, we have that $x^{*}_{i}-x^{*}_{i} \ge 0 $ so that $k_{u,i}^{n}(x^*,x) \le 0$ a.e.

Therefore, we see that 
\begin{equation}
\begin{cases}
    \int_{B_{\delta, i}^{+}}k_{u,i}^{n}(x^*,x)dx&\ge 0,\\
    \int_{B_{\delta, i}^{-}}k_{u,i}^{n}(x^*,x)dx&\le 0,\\
\end{cases}
\end{equation}
and, since $B_{\delta}$ is the disjoint union $B_{\delta, i}^{+} \cup B_{\delta, i}^{-}$
\begin{equation}
\int_{B_{\delta}}k_{u,i}^{n}(x^*,x)dx = \int_{B_{\delta, i}^{+}}k_{u,i}^{n}(x^*,x)dx +\int_{B_{\delta, i}^{-}}k_{u,i}^{n}(x^*,x)dx .
\end{equation}
If $\int_{B_{\delta}}k_{u,i}^{n}(x^*,x)dx =0$, then setting $\Omega^{*}=B_{\delta}$, we are done. Otherwise, we consider two cases.
\paragraph{Case 1: $\int_{B_{\delta}}k_{u,i}^{n}(x^*,x)dx < 0$}
This is of course equivalent to \[\int_{B_{\delta, i}^{+}}k_{u,i}^{n}(x^*,x)dx < -\int_{B_{\delta, i}^{-}}k_{u,i}^{n}(x^*,x)dx.\]
We shall shortly appeal to lemma \ref{thm:lemma1}. Let $X=B_{\delta, i}^{-}$ and $\sigma(\mathcal{B}^{-})$ be the $\sigma$-algebra generated by open subsets of $B_{\delta, i}^{-}$. We have that $\sigma(\mathcal{B}^{-}) \subset \mathcal{B}$, where $\mathcal{B}$ is the Borel $\sigma$-algebra on $\mathbb{R}^D$. Let $\mu$ be the Lebesgue measure on $\mathcal{B}$ restricted to $\sigma(\mathcal{B}^{-})$, so that $\mu(B_{\delta, i}^{-})<\delta <\infty$. If we let $f(x)=-k_{u,i}^{n}(x^*,x)$, then $f\in L^{1}_{+}(\mu)$ and $f\ge 0$ almost everywhere. Finally, set \[M:=\int_{B_{\delta, i}^{+}}k_{u,i}^{n}(x^*,x)dx\] and $ M_f:=\int_{B_{\delta, i}^{-}}-k_{u,i}^{n}(x^*,x)dx>M$.
The conditions of lemma \ref{thm:lemma1} being satisfied, we are guaranteed a measurable $A \subset B_{\delta, i}^{-}$ with $\int_A f d\mu = M$. By definition, this means that $\int_A -k_{u,i}^{n}(x^*,x) dx = \int_{B_{\delta, i}^{+}}k_{u,i}^{n}(x^*,x)dx$. Thus,
\[\int_{A}k_{u,i}^{n}(x^*,x)dx+\int_{B_{\delta, i}^{+}}k_{u,i}^{n}(x^*,x)dx.\]
Setting $\Omega^{*}=A\cup B_{\delta, i}^{+}$, we have $\nabla_n^{\Omega^{*}}u(x^*)=0$

\paragraph{Case 2: $\int_{B_{\delta}}k_{u,i}^{n}(x^*,x)dx > 0$}
This equivalent to \[\int_{B_{\delta, i}^{+}}k_{u,i}^{n}(x^*,x)dx > -\int_{B_{\delta, i}^{-}}k_{u,i}^{n}(x^*,x)dx.\]
In this case, let $X=B_{\delta, i}^{+}$ and $\sigma(\mathcal{B}^{+})$ be the $\sigma$-algebra generated by open subsets of $B_{\delta, i}^{+}$ containing $x^{*}$. We have that $\sigma(\mathcal{B}^{+}) \subset \mathcal{B}$ and $\mu$ is the Lebesgue measure on $\mathcal{B}$ restricted to $\sigma(\mathcal{B}^{+})$, so that $\mu(B_{\delta, i}^{+})<\delta <\infty$. If we let $f(x)=k_{u,i}^{n}(x^*,x)$, then $f\in L^{1}_{+}(\mu)$ and $f\ge 0$ almost everywhere. Similar to the previous case, we set \[M:=-\int_{B_{\delta, i}^{-}}k_{u,i}^{n}(x^*,x)dx\] and $ M_f:=\int_{B_{\delta, i}^{+}}k_{u,i}^{n}(x^*,x)dx>M$.
As before, the conditions of lemma \ref{thm:lemma1} are satisfied, and we are guaranteed a measurable $A \subset B_{\delta, i}^{+}$ with $\int_A f d\mu = M$. By definition, this means that $\int_A k_{u,i}^{n}(x^*,x) dx = -\int_{B_{\delta, i}^{-}}k_{u,i}^{n}(x^*,x)dx$. Thus,
\[\int_{A}k_{u,i}^{n}(x^*,x)dx+\int_{B_{\delta, i}^{-}}k_{u,i}^{n}(x^*,x)dx.\]
Setting $\Omega^{*}=A\cup B_{\delta, i}^{-}$, we have $\nabla_n^{\Omega^{*}}u(x^*)=0$
\end{proof}
Note that the $\Omega^{*}$ constructed in the above proof implicitly  depends on $n$ due to the dependency of $k_{u,i}^{n}(x^*,x)$ on $n$.

%%%%%%%%%%%END THM 1 PROOF%%%%%%%%%%%%%%%%%%%%%%%
\subsection{Proof of Proposition \ref{thm:taylor1}}
\begin{proof} 
\textbf{Case 1, uniform convergence: $u\in C_{c}^1(\Omega).$}
Since $r_n-r = A(x_0,x)-A_n(x_0,x)$, we show $r_n\rightarrow r$ uniformly, from which it follows that $A_n(x_0,x)\rightarrow A(x_0,x)$ uniformly. We can estimate, using Cauchy-Schwarz, \begin{equation}
\begin{split}
|r_n-r|&=|A(x_0,x)-A_n(x_0,x)|\\&=|(x-x_0)^T(\nabla u(x_0)-\nabla_n u(x_0))|\\&\le \|x-x_0\| \|\nabla u(x_0)-\nabla_n u(x_0)\|\\&\le M \|\nabla u(x_0)-\nabla_n u(x_0)\|,
\end{split}
\end{equation}
where, since $x,x_0 \in \Omega$ and $\Omega$ is bounded, we have $\|x-x_0\|<M$ for some $M>0$ independent of $x_0,x$. Since $u\in C_{c}^1(\Omega)$, we know $\nabla_n u \rightarrow \nabla u$ uniformly by Theorem 1.1 of \cite{menspec}. Thus, given $\epsilon >0$, there is an $N>0$ such that for all $n>N$, we have $\|\nabla u(x_0)-\nabla_n u(x_0)\|<\frac{\epsilon}{M}$. Putting this into the earlier estimate yields $|r_n-r|<\epsilon$ for $n>N$.

\textbf{Case 2, $L^2(\Omega\times\Omega)$ convergence: $u\in H_{0}^1(\Omega).$}We know that $C^{1}_{0}(\Omega)$ is dense in $H_{0}^1(\Omega)$ (see \cite{adams}). Hence, we select a $\tilde{u}\in C^{1}_{0}(\Omega)$ such that $\|u-\tilde{u}\|_{H^{1}_{0}(\Omega)}<\epsilon$ and we consider the standard (local) and nonlocal affine approximant of $\tilde{u}$ which we denote by $\tilde{r}$ and $\tilde{r}_n$ respectively. As before, we have \begin{equation*}
\begin{split}
|\tilde{r}_n-\tilde{r}|&\le \|x-x_0\| \|\nabla \tilde{u}(x_0)-\nabla_n \tilde{u}(x_0)\|.
\end{split}
\end{equation*}
Now, as we noted earlier, we have that $\|x-x_0\|<M$ for some $M>0$ independent of $x_0,x$ due to $\Omega$ being bounded. Thus, holding $x$ constant, we have,
 \begin{equation}
\begin{split}
\int_{\Omega}|\tilde{r}_n(\cdot,x)-\tilde{r}(\cdot,x)|^2 dx_0&\le M^2 \int_{\Omega}\|\nabla \tilde{u}(x_0)-\nabla_n \tilde{u}(x_0)\|^2 dx_0 \\&= M^2\|\nabla \tilde{u}(x_0)-\nabla_n \tilde{u}(x_0)\|^{2}_{L^2(\Omega)}.
\end{split}
\end{equation}
Assume that the measure of $\Omega$ is $m=m(\Omega)$. Then, integrating again with respect to $x$ we have  \begin{equation}
\begin{split}
\int_{\Omega}\int_{\Omega}|\tilde{r}_n(\cdot,\cdot)-\tilde{r}(\cdot,\cdot)|^2 dx_0 dx&\le (M \,m)^2 \int_{\Omega}\|\nabla \tilde{u}(x_0)-\nabla_n \tilde{u}(x_0)\|^2 dx_0 \\&= (M \,m)^2\|\nabla \tilde{u}(x_0)-\nabla_n \tilde{u}(x_0)\|^{2}_{L^2(\Omega)}.
\end{split}
\end{equation} By Theorem 1.1 of \cite{menspec}, $\|\nabla \tilde{u}(x_0)-\nabla_n \tilde{u}(x_0)\|_{L^2(\Omega)}\rightarrow 0 $ as $n\rightarrow \infty$, so we conclude that given $\epsilon>0$, we can find an $N>0$ such that \[\|\nabla \tilde{u}(x_0)-\nabla_n \tilde{u}(x_0)\|_{L^2(\Omega)}<\frac{\epsilon}{M\,m}\] when $n>N$. Thus, we have for $n>N$, that \begin{equation}\|\tilde{r}-\tilde{r}_n\|_{L^2(\Omega \times \Omega)}<\epsilon.\label{eq:A_aeqaa}\end{equation} We now write \begin{equation*}\|r-r_n\|_{L^2(\Omega \times \Omega)}\le \|r-\tilde{r}\|_{L^2(\Omega \times \Omega)}+\|\tilde{r}-\tilde{r}_n\|_{L^2(\Omega \times \Omega)}+\|\tilde{r}_n-r_n\|_{L^2(\Omega \times \Omega)},\end{equation*}
and estimate each of the three terms in the above equation. Now,
%\begin{equation*}|r-\tilde{r}| \le |u(x)-\tilde{u}(x)|+|u(x_0)-\tilde{u}(x_0)|+M\|\nabla u(x_0) - \nabla \tilde{u}(x_0)\|, \end{equation*}
%from which it follows that 
\begin{equation*}
\begin{split}
\|r-\tilde{r}\|^{2}_{L^2(\Omega\times\Omega)} &\le 2 m^2 (\|u(x)-\tilde{u}(x)\|_{L^2(\Omega)}^{2}\\& + \|u(x_0)-\tilde{u}(x_0)\|^{2}_{L^2(\Omega)}+M^2\|\nabla u(x_0) - \nabla \tilde{u}(x_0)\|^{2}_{L^2(\Omega)}).
\end{split}
\end{equation*}
By the density assumption, we have that $\|u-\tilde{u}\|_{L^2(\Omega)}<\epsilon$ and $\|\nabla u-\nabla \tilde{u}\|_{L^2(\Omega)}<\epsilon$, and hence, we conclude that \begin{equation}\|r-\tilde{r}\|_{L^2(\Omega\times\Omega)}\le m \sqrt{4+2M^2} \epsilon.\label{eq:A_aeqbb}\end{equation}
Similarly, we have
\begin{equation*}
\begin{split}
\|r_n-\tilde{r}_n\|^{2}_{L^2(\Omega\times\Omega)} &\le 2 m^2 (\|u(x)-\tilde{u}(x)\|_{L^2(\Omega)}^{2}\\& + \|u(x_0)-\tilde{u}(x_0)\|^{2}_{L^2(\Omega)}+M^2\|\nabla_n u(x_0) - \nabla_n \tilde{u}(x_0)\|^{2}_{L^2(\Omega)}).
\end{split}
\end{equation*}
We also have 
\begin{equation}
\begin{split}
\|\nabla_n u(x_0) - \nabla_n \tilde{u}(x_0)\|_{L^2(\Omega)} &\le \|\nabla_n u(x_0) - \nabla u(x_0)\|_{L^2(\Omega)}\\&+\|\nabla u(x_0) - \nabla \tilde{u}(x_0)\|_{L^2(\Omega)}+\|\nabla \tilde{u}(x_0) - \nabla_n \tilde{u}(x_0)\|_{L^2(\Omega)}.
\end{split}\label{eq:A_aeq11}
\end{equation}
By theorem 1.1 of \cite{menspec}, there exists an $N_1>0$ such that each of the three terms in equation \ref{eq:A_aeq11} can be bounded by $\frac{\epsilon}{3}$, for all $n>N_1$, which yields \begin{equation}\|\nabla_n u(x_0) - \nabla_n \tilde{u}(x_0)\|_{L^2(\Omega)} < \epsilon,\end{equation}
for all $n>N_1$, which in turn implies that \begin{equation}\|r_n-\tilde{r}_{n}\|_{L^2(\Omega\times\Omega)}\le m \sqrt{4+2M^2} \epsilon.\label{eq:A_aeqcc}\end{equation}
Putting equations \ref{eq:A_aeqaa}, \ref{eq:A_aeqbb} and \ref{eq:A_aeqcc} together, we have, for $n>\text{max}(N,N_1)$, that \[\|r-r_n\|_{L^{2}(\Omega\times\Omega)}<(1+2m\sqrt{4+2M^2})\epsilon,\] and since $\epsilon$ was arbitrary, we conclude that $\|r_n(\cdot,\cdot)- r(\cdot,\cdot)\|_{L^2(\Omega\times \Omega)}\rightarrow 0$ as $n\rightarrow \infty$.
\end{proof}

%%%%%%%%%%%%%%%BDD THEOREM%%%%%%%%%%%%%%%%%%%%%%%
\subsection{Proof of Proposition \ref{thm:bdd}}
\begin{proof}
Let $u\in Lip(\Omega,M)$. We know from Lemma 2.2 of \cite{menspec} that $\|\nabla_n u\|_{L^{\infty}(\Omega)}\le DM\|\rho_n\|_{L^1(\Omega)}$ and since $\|\rho_n\|_{L^1(\Omega)}\le\|\rho_n\|_{L^1(\mathbb{R}^D)}=1$, we conclude that \[\|\nabla_n u\|_{L^{\infty}(\Omega)}\le DM.\] Now, \[\|x^{k+1,n}\|\le \|x^{k,n}\|+\alpha^{k}\|\nabla_n u^{k,n}\|\le \|x^{k,n}\|+DM\alpha^{k}.\] Iterating this observation, we get \[\|x^{k+1,n}\|\le \|x^{0}\|+DM(\sum_{m=1}^{k}\alpha^m).\] By assumption, we have $\sum_{m=1}^{K}\alpha^{m}<1$ for any $K>0$. Thus, $\|x^{k+1,n}\|\le \|x^{0}\| +DM$, and the sequence of iterates $\{x^{k+1,n}\}$ is bounded.
\end{proof}

%%%%%%%%%%%%%%%%%%%%%%%%%%%%%%%%%%%%%%%%%%%%%%%%%
\subsection{Proof of Theorem \ref{thm:suboptimalgd}}
\begin{proof}
The proof is by induction on $k$.
\paragraph{Base case:}Let $k=0$. Now, $x^{0} = x^{0,n}$, so \[x^{1}-x^{1,n} = \alpha^0(\nabla_n u^0-\nabla u^0).\]
By \cite{menspec}, we know that $\nabla_n u\rightarrow \nabla u$ as $n\rightarrow \infty$ uniformly for all $x\in V$. Thus, given $\epsilon > 0 $ there is an $N>0$ so that for all $n>N$, we have $\|\nabla_n u^0- \nabla u^0\|<\frac{\epsilon}{\alpha^0}$. Thus, $\|x^{1}-x^{1,n}\|<\epsilon$ for $n>N$.
\paragraph{Induction Step:}Having completed the base case, we assume the result to be true for $k=K$, and we prove the estimate for $k=K+1$. Again, \[x^{K+1}-x^{K+1,n} = x^{K}-x^{K,n} + \alpha^K(\nabla_n u^{K,n}-\nabla u^K).\] Thus, \[\|x^{K+1}-x^{K+1,n}\|\le\underbrace{\|x^{K}-x^{K,n}\|}_{(1)} + \alpha^K\underbrace{\|(\nabla_n u^{K,n}-\nabla u^K)\|}_{(2)}.\] By the induction hypothesis, there is an $N_0>0$ such that term $(1)$ is bounded by $\frac{\epsilon}{3}$ for $n>N_0$.

For the second term, we have \[\|\nabla_n u^{K,n}-\nabla u^K\|\le\|\nabla_n u^{K,n}-\nabla u^{K,n}\|+\|\nabla u^{K,n}-\nabla u^K\|.\] First, by uniform convergence of  $\nabla_n u\rightarrow \nabla u$ that there is an $N_1$ such that for $n>N_1$, we have \[\|\nabla_n u^{K,n}-\nabla u^{K,n}\|<\frac{\epsilon}{3 \alpha^{K}}.\] Second, by the mean-value theorem applied to $\nabla u^{K,n}-\nabla u^{K}$, we have that \[\|\nabla u^{K,n}-\nabla u^{K}\|\le C\|x^{K,n}-x^{K}\|,\] where $C=\|Hu\|_{L^{\infty}(V)}<\infty$. Indeed, since $u\in C^2(\Omega)$ the Hessian is continuous on the compact set $V$ and hence is uniformly bounded. By the induction hypothesis, there is an $N_2$ such that $\|x^{K,n}-x^{K}\|<\frac{\epsilon}{3C\alpha^K}$ for all $n>N_2$. Choosing $N=\max({N_0,N_1,N_2})$, we have that for $n>N=N(\alpha^0,\ldots,\alpha^{K})$, $\|x^{K+1}-x^{K+1,n}\|<\epsilon$, and the induction step is complete.
\end{proof}

%%%%%%%%%%%%%%%OPTIMAL STEPSIZE THM%%%%%%%%%%%%%%
\subsection{Proof of Theorem \ref{thm:optimalgd}}
In the proof of the theorem, we shall need the notion of $\Gamma-$convergence in the context of metric spaces (see \cite{gamma} for more details).
\begin{definition}
Let $X$ be a metric space (or, more generally, a first countable topological space) and $F_n:X\rightarrow \overline{\mathbb{R}}$ be a sequence of extended real valued functions on $X$. Then the sequence $F_{n}$ is said to $\Gamma$-converge to the $\Gamma$-limit $F:X\to \overline {\mathbb {R}}$ (written $F_n \xrightarrow{\text{$\Gamma$}} F$) if the following two conditions hold:
\begin{itemize}\item For every sequence $x_n \in X$ such that $x_n\rightarrow x \in X$ as $n\rightarrow \infty$, we have $F(x) \le \lim\inf_{n\rightarrow \infty} F_n(x_n),$ \item For all $x\in X$, there is a sequence of $x_n \in X$ with $x_n\rightarrow x$ (called a $\Gamma$-realizing sequence) such that $F(x)\ge \lim\sup_{n\rightarrow \infty}F_n(x_n).$\end{itemize}
\end{definition}
The notion of $\Gamma$-convergence is an essential ingredient for the study of convergence of minimizers. Indeed, if $F_n \xrightarrow{\text{$\Gamma$}} F$, and $x_{n}$ is a minimizer of $F_{n}$, then it is true that every limit point of the sequence $x_{n}$ is a minimizer of $F$. Moreover, if $F_n \rightarrow F$ uniformly, then $F_n \xrightarrow{\text{$\Gamma$}} F^{lsc}$, where $F^{lsc}$ is the \emph{lower semi-continuous} envelope of $F$, i.e., the largest lower semi-continuous function bounded above by $F$. In particular, if $F_n$ and $F$ happen to be continuous, then $F^{lsc}=F$, and therefore, if $F_n \rightarrow F$ uniformly then $F_n \xrightarrow{\text{$\Gamma$}} F^{lsc}=F$ (see \cite{gamma}).
\begin{proof}
The proof is by induction on $k$.
\paragraph{Base case: }
We consider $k=0$. Recall that \[\alpha^{0,n}=\argminA_{\alpha\in I}\,u(x^{0,n}-\alpha\nabla_n u^{0,n})=\argminA_{\alpha\in I}\,u(x^{0}-\alpha\nabla_n u^{0}),\]
since $x^0=x^{0,n}$.
Now let \[\phi_{n}(\alpha)=x^{0}-\alpha\nabla_n u^{0},\] and \[\phi(\alpha)=x^{0}-\alpha\nabla u^{0}.\]We first notice that $\phi_n, \phi$ are continuous and $\phi_n\rightarrow \phi$ uniformly as $n\rightarrow \infty$. Indeed, \[|\phi_n(\alpha)- \phi(\alpha)| = \alpha|\nabla u^0-\nabla_n u^0|,\] and, by the uniform $\alpha$-independent convergence $\nabla_n u^0\rightarrow \nabla u^0$ (see \cite{menspec}), we conclude that \[\|\phi_n(\alpha)- \phi(\alpha)\|_{L^{\infty}(\Omega)} = \alpha\|\nabla u^0-\nabla_n u^0\|_{L^{\infty}(\Omega)}\rightarrow 0\] as $n\rightarrow \infty$ for any $\alpha \in I$. Now, since $I$ is compact and $u,\phi_n,\phi$ are continuous, the composition $u\circ \phi_n$ converges uniformly to $u \circ \phi$.

By our remarks about $\Gamma$-convergence earlier, we know that uniform convergence of a sequence of continuous functions implies $\Gamma$-convergence, which in turn implies convergence of limits of minimizers. Thus, limit points of the sequence of minimizers $\{\alpha^{0,n}\}$ of $u\circ \phi_n$ are minimizers of $u\circ \phi$, i.e., $\alpha^0$. In other words, there is a subsequence $\{\alpha^{0,n_m(0)}\}$ of $\{\alpha^{0,n}\}$ such that \[\alpha^{0,n_m(0)}\rightarrow \alpha^{0}\] as $n_m(0)\rightarrow \infty$.
\begin{equation}
\begin{split}
    \|x^{1,n_m(0)}-x^{1}\| &= \|\alpha^{0}\nabla u^0 - \alpha^{0,n_m(0)}\nabla_{n_m(0)}u^{0,n_m(0)}\|\\
&=\|\alpha^{0}\nabla u^0 - \alpha^{0,n_m(0)}\nabla_{n_m(0)}u^{0}\|\\
&= |\alpha^0-\alpha^{n_m(0)}|\|\nabla u^0 - \nabla_{n_m(0)}u^{0}\|,\\
\end{split}
\end{equation}
because $x^{0}=x^{0,n_m(0)}$ so that,
\begin{equation}
\begin{split}
    \nabla_{n_m(0)} u^{0,n_m(0)} &= \nabla_{n_m(0)} u(x^{0,n_m(0)})\|\\
&=\nabla_{n_m(0)} u(x^{0})\\
&=\nabla_{n_m(0)} u^{0}.\\
\end{split}
\end{equation}
Since $\alpha^{0,n_m(0)}\rightarrow \alpha^{0}$ as $n_m(0)\rightarrow \infty$, for any $\epsilon>0$, there is an $N_1>0$ such that \[|\alpha^0-\alpha^{n_m(0)}|<\sqrt{\epsilon}\] for $n_m(0)>N_1$. Likewise, there is an $N_2>0$ such that \[\|\nabla u^0 - \nabla_{n_m(0)}u^{0,n_m(0)}\|=\|\nabla u^0 - \nabla_{n_m(0)}u^{0}\|<\sqrt{\epsilon}\] for $n_m(0)>N_2$ since $\nabla_{n} u \rightarrow \nabla u$ as $n\rightarrow \infty$. Choosing $N=\max\{N_1,N_2\}$, we see $\|x^{1}-x^{1,n_m(0)}\|<\epsilon$. The base case is proved.

\paragraph{Induction Step: }We assume the result is true for $k=K-1$. We prove it for $k=K$. Similar to the base case, we let \[\phi_{K,n}(\alpha)=x^{K,n}-\alpha\nabla_n u^{K,n}\] and \[\phi_{K}(\alpha)=x^{K}-\alpha\nabla u^{K}.\] We now show that by the induction hypothesis, there is a subsequence $\phi_{K,n_m(K)}$ of $\phi_{K,n}$ such that $\phi_{K,n_m(K)}\rightarrow \phi_{K}$ uniformly. Indeed, \[\phi_{K,n}(\alpha)-\phi_{K}(\alpha) = x^{K,n}-x^{K}+\alpha(\nabla u^{K}-\nabla_{n}u^{K,n}).\]  Thus, \[\|\phi_{K,n}(\alpha)-\phi_{K}(\alpha)\|\le \|x^{K,n}-x^{K}\|+\alpha\|\nabla u^{K}-\nabla_{n}u^{K,n}\|,\] so that  \[\|\phi_{K,n}(\alpha)-\phi_{K}(\alpha)\|\le \underbrace{\|x^{K,n}-x^{K}\|}_{(1)}+\alpha(\underbrace{\|\nabla u^{K}-\nabla u^{K,n}\|}_{(2)}+\underbrace{\|\nabla u^{K,n}-\nabla_{n}u^{K,n}\|}_{(3)}).\] By the induction hypothesis, there is a subsequence $x^{K,n_m(K)}$ of $x^{K,n}$ such that $x^{K,n_m(K)} \rightarrow x^{K}$ as $n_m(K)\rightarrow \infty$. Thus, given an $\epsilon>0$, there is an $N_1>0$ such that for all $n_m(K)>N_1$, \[\|x^{K,n_m(k)}-x^{K}\|<\frac{\epsilon}{3}.\] Next, by the uniform convergence of $\nabla_n u \rightarrow \nabla u$, we have that there exists an $N_2>0$ such that \[\|\nabla u^{K,n_m(K)}-\nabla_{n_m(K)}u^{K,n_m(K)}\| < \frac{\epsilon}{3}\] for all $n_m(K)>N_2$. As in the suboptimal stepsize case, we now apply the mean value theorem to $\nabla u^{K}-\nabla u^{K,n_m(K)}$. We have that \[\|\nabla u^{K}-\nabla u^{K,n_m(K)}\|\le C\|x^{K}-x^{K,n_m(K)}\|< C\frac{\epsilon}{3}\] where $C=\|Hu\|_{L^{\infty}(V)}<\infty$. Indeed, since $u\in C^2(\Omega)$, the Hessian is continuous on the compact set $V$ containing the iterates $x^{k},x^{k,n}$ and hence is uniformly bounded. Putting all this together, and noting that $\alpha \in I = [0,A]$ we see that \[\|\phi_{K,n_m(K)}(\alpha)-\phi_{K}(\alpha)\|< \frac{\epsilon}{3}(1+\alpha(C+1))\le \frac{\epsilon}{3}(1+A(C+1))\] for $n_m(K)>\max{\{N_1,N_2\}}$. Thus, $\phi_{K,n_m(K)}(\alpha)\rightarrow \phi_{K}(\alpha)$ uniformly on the compact interval $I$. Therefore, the composition $u\circ\phi_{K,n_m(K)}$ converges uniformly to $u\circ \phi_{K}$ since $u$ is continuous. This implies that $u\circ\phi_{K,n_m(K)}$ $\Gamma$-converges to $u\circ \phi_{K}$, so that every limit point of the collection of minimizers $\{\alpha^{K,n_m(K)}\}$ of $\phi_{K,n_m(K)}$ is a (and since we have assumed uniqueness of the linesearch minimizer, we can say \emph{the}) minimizer $\alpha^{K}$ of $\phi_{K}$. There is therefore a further subsequence of $\{\alpha^{K,n_m(K)}\}$, which we will denote by $\{\alpha^{K,n_{m_{1}}(K)}\}$, such that $\alpha^{K,n_{m_{1}}(K)}\rightarrow \alpha^{K}$. This proves the first half of the result. We still need to show that there exists a subsequence $x^{K+1,n_{m_{2}}(K)}$ of $x^{K+1,n}$ such that $\|x^{K+1,n_{m_{2}}(K)}-x^{K+1}\|\rightarrow 0$ as $n_{m_{2}}(K)\rightarrow \infty$.

To this end, consider \begin{equation}
\begin{split}
    \|x^{K+1,n}-x^{K+1}\| &= \|x^{K,n}-x^{K}+\alpha^{K}\nabla u^{K}-\alpha^{K,n}\nabla_{n}u^{K,n}\|\\
&\le \|x^{K,n}-x^{K}\|+\|\alpha^{K}\nabla u^{K}-\alpha^{K,n}\nabla_{n}u^{K,n}\|.\\
\end{split}
\end{equation}
We write \begin{equation}
\begin{split}
    \|\alpha^{K}\nabla u^{K}-\alpha^{K,n}\nabla_{n}u^{K,n}\| &= \|\alpha^{K}\nabla u^{K}-\alpha^{K}\nabla_{n}u^{K,n}+\alpha^{K}\nabla_{n}u^{K,n}-\alpha^{K,n}\nabla_{n}u^{K,n}\|\\
&\le \alpha^{K}\|\nabla u^{K}-\nabla_{n}u^{K,n}\|+|\alpha^{K}-\alpha^{K,n}|\|\nabla_{n}u^{K,n}\|\\
&\le \alpha^{K}\|\nabla u^{K}-\nabla_{n}u^{K,n}\|+|\alpha^{K}-\alpha^{K,n}|DM.
\end{split}
\end{equation}The last inequality follows since $u\in C^2(\Omega) \cap Lip(\Omega,M)$ and by Lemma 2.2 of \cite{menspec}, we have $\|\nabla_{n}u^{K,n}\|\le \|\nabla_{n} u\|_{L^{\infty}(\Omega)}\le DM\|\rho\|_{L^1(\Omega)}\le DM.$

Now, \[\|\nabla u^{K}-\nabla_{n}u^{K,n}\|\le \|\nabla u^{K}-\nabla u^{K,n}\|+\|\nabla u^{K,n}-\nabla_{n}u^{K,n}\|.\] By the mean value theorem, the first term \[\|\nabla u^{K}-\nabla u^{K,n}\|\le C\|x^{K}-x^{K,n}\|\] with $C=\|Hu\|_{L^{\infty}(V)}<\infty$ due to $u\in C^2(\Omega)$. The second term \[\|\nabla u^{K,n}-\nabla_{n}u^{K,n}\|\rightarrow 0\] as $n\rightarrow \infty$ by uniform convergence of $\nabla_n u$ to $\nabla u$ on $V$. Thus, putting all the above together, we see that
\[
\|x^{K+1,n}-x^{K+1}\|\le (1+\alpha^{K}C)\|x^{K,n}-x^{K}\|+\alpha^{K}\|\nabla u^{K,n}-\nabla_n u^{K,n}\|+|\alpha^{K,n}-\alpha^{K}|DM.
\]

Now, by our induction hypothesis, there is a subsequence $x^{K,n_m(K)}$ of $x^{K,n}$ such that $x^{K,n_m(K)} \rightarrow x^{K}$ as $n_m(K)\rightarrow \infty$. Thus, given an $\epsilon>0$, there is an $N_1>0$ such that for all $n_m(K)>N_1$, \[\|x^{K,n_m(k)}-x^{K}\|<\frac{\epsilon}{3(1+\alpha^{K}C)}.\] Next, by what was proved earlier, there is a subsequence $\alpha^{K,n_{m_{1}}(K)}$ of $\alpha^{K,n_m(K)}$ and an $N_2>0$ such that for $n_{m_1}(K)>N_3$, \[|\alpha^{K,n_{m_{1}}(K)}-\alpha^{K}|<\frac{\epsilon}{3DM}.\] Finally, there exists an $N_3$ such that  \[\|\nabla u^{K,n_m(K)}-\nabla_{n_m(K)}u^{K,n_m(K)}\|<\frac{\epsilon}{3\alpha^K}.\] Define a new subsequence $\{n_{m_{2}}(K)\}$ to be the diagonal of $\{n_m(K)\}$ and $\{n_{m_{1}}(K)\}$. Then, for $n_{m_{2}}(K)>\max\{N_1,N_2,N_3\}$, we have 
\begin{equation}
\begin{split}
    \|x^{K+1,n_{m_2}(K)}-x^{K+1}\|&\le (1+\alpha^{K}C)\|x^{K,n_{m_2}(K)}-x^{K}\|\\
&+\alpha^{K}\|\nabla u^{K,n_{m_2}(K)}-\nabla_n u^{K,n_{m_2}(K)}\|+|\alpha^{K,n_{m_2}(K)}-\alpha^{K}|DM \\&< \epsilon.\\
\end{split}
\end{equation}
The subsequence $x^{K+1,n_{m_2}(K)}$ is the required subsequence of $x^{K+1,n}$, and the induction is complete.
\end{proof}

%%%%%%%%%%%%%%%%%SGD1%%%%%%%%%%%%%%%%%%%%%%%%
\subsection{Proof of Lemma \ref{thm:sgd1}}
\begin{proof}
By convexity of $u$, we have that
\begin{equation}\label{eq:A_asgd1}
\begin{split}
u(y)-u(x)&\ge (y-x)^T\nabla u(x), \text{ or,}\\
u(x)-u(y)&\le (x-y)^T\nabla u(x).\\
\end{split}
\end{equation}On a compact neighborhood $K$ of $x$, $\nabla_n u \rightarrow \nabla u$ uniformly by theorem 1.1 of \cite{menspec}. Thus, $\nabla_n u(x) \rightarrow \nabla u(x)$ as $n\rightarrow \infty$. For any $\lambda \in \mathbb{R}^D$, this implies that we have $\lambda^T\nabla_n u(x)\rightarrow \lambda^T\nabla u(x)$. Choosing $\lambda = (y-x)$, we have $(y-x)^T\nabla_n u(x)\rightarrow (y-x)^T\nabla u(x)$. Thus, given $\epsilon>0$, there is an $N>0$ such that for all $n>N$, we have $|(y-x)^T\nabla_n u(x) - (y-x)^T\nabla u(x)|<\epsilon$, so that $-\epsilon + (y-x)^T\nabla_n u(x) < (y-x)^T\nabla u(x) < \epsilon + (y-x)^T\nabla_n u(x)$.  Thus, $u(y)-u(x)\ge (y-x)^T\nabla u(x) > (y-x)^T\nabla_n u(x) -\epsilon.$ 
\end{proof}
%%%%%%%%%%%%%%%%%%%%%%%%%%%SGDSGD1%%%%%%%%%%%
\subsection{Proof of Theorem \ref{thm:sgdsgd1}}
As a motivating discussion of the nonlocal stochastic gradient descent method, we consider a simple example. If $f:[0,1]\rightarrow \mathbb{R}$ is a differentiable function, we can consider, for any $x,y \in[0,1]$, and $x\neq y$, the difference quotient \[k_f(x,y) = \frac{f(x)-f(y)}{x-y},\]which, by the mean value theorem, equals $f^{\prime}(\xi)$ for some $\xi\in[x,y]$. If we now think of $X,Y$ as $[0,1]-$valued random variables, then we have that the difference quotient $k_{f}(X,Y)$ is also a random variable. Thus, it is reasonable to expect that if the the conditional distribution of $Y$ given $X$ is centered sufficiently closely around $X$, then \[\mathbb{E}[k_{f}(X,Y)|X]\approx f^{\prime}(X).\] While this approximation depends on the exact nature of $p_{Y|X}(y|x)$, it nevertheless motivates our interpretation of $k_{f}(X,Y)$ as being a nonlocal subgradient.
 
We recall a relevant result from \cite{shai}.
\begin{corollary}(Corollary 14.2 of \cite{shai}) Let $u$ be a convex, $M-$Lipschitz function and let $x^{*}\in \arg\min_{x:\|x\|\le B}u(x).$ If we run the GD algorithm on $u$ for $K$ steps with $\alpha=\sqrt{\frac{B^2}{M^2 K}}$ then the output vector $\bar{x}$ satisfies \[u(\bar{x}) -u(x^{*})\le \frac{BM}{\sqrt{K}}.\] Furthermore, for every $\epsilon>0$, to achieve $u(\bar{x})-u(x^*)\le \epsilon,$ it suffices to run the GD algorithm for a number of iterations that satisfies \[K\ge \frac{B^2M^2}{\epsilon^2}.\]
\end{corollary}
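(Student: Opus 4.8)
The plan is to reconstruct the classical potential-function argument for (sub)gradient descent with iterate averaging, specialized to the prescribed step size $\alpha=\sqrt{B^2/(M^2K)}$. Since $u$ is convex and $M$-Lipschitz, every (sub)gradient $g^k$ employed in the update satisfies $\|g^k\|\le M$; this is the sole role of the Lipschitz hypothesis. The central object is the squared distance to the optimizer, $\|x^k-x^*\|^2$, and the goal is to extract from its per-step behavior a term that dominates the suboptimality gap $u(x^k)-u(x^*)$.

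First I would expand the update $x^{k+1}=x^k-\alpha g^k$ to obtain
\[
\|x^{k+1}-x^*\|^2=\|x^k-x^*\|^2-2\alpha (g^k)^T(x^k-x^*)+\alpha^2\|g^k\|^2.
\]
Convexity supplies the key inequality $(g^k)^T(x^k-x^*)\ge u(x^k)-u(x^*)$, which after rearrangement yields
\[
u(x^k)-u(x^*)\le \frac{1}{2\alpha}\bigl(\|x^k-x^*\|^2-\|x^{k+1}-x^*\|^2\bigr)+\frac{\alpha}{2}\|g^k\|^2.
\]

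Next I would sum this inequality over $k=1,\ldots,K$. The distance terms telescope, leaving $\|x^1-x^*\|^2-\|x^{K+1}-x^*\|^2\le\|x^1-x^*\|^2$; with the initialization $x^1=0$ from the algorithm and $\|x^*\|\le B$ (which follows since $x^*\in\arg\min_{\|x\|\le B}u$), this is at most $B^2$. Bounding each $\|g^k\|^2\le M^2$ then gives $\sum_{k=1}^{K}\bigl(u(x^k)-u(x^*)\bigr)\le \frac{B^2}{2\alpha}+\frac{\alpha M^2 K}{2}$. Dividing by $K$ and invoking Jensen's inequality $u(\bar{x})\le\frac{1}{K}\sum_{k=1}^{K}u(x^k)$ (valid because $u$ is convex and $\bar{x}$ is the average of the iterates) produces $u(\bar{x})-u(x^*)\le \frac{B^2}{2\alpha K}+\frac{\alpha M^2}{2}$.

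Finally I would substitute the prescribed $\alpha=\sqrt{B^2/(M^2K)}$; a short balancing computation makes each of the two terms equal to $\frac{BM}{2\sqrt{K}}$, giving the stated bound $u(\bar{x})-u(x^*)\le \frac{BM}{\sqrt{K}}$. The complexity claim is then immediate: enforcing $\frac{BM}{\sqrt{K}}\le\epsilon$ rearranges to $K\ge \frac{B^2M^2}{\epsilon^2}$. I anticipate no genuine obstacle, as this is the textbook analysis of \cite{shai}; the only points warranting care are verifying that the chosen $\alpha$ is precisely the minimizer over $\alpha>0$ of the convex expression $\frac{B^2}{2\alpha K}+\frac{\alpha M^2}{2}$, and confirming that the averaging step is what converts the summed regret bound into a guarantee on the single output vector $\bar{x}$.
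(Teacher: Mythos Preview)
Your argument is correct and is exactly the standard potential-function proof from \cite{shai}. Note that the paper does not supply its own proof of this corollary: it is merely \emph{recalled} as a known result before proving the $\epsilon$-subgradient generalization (Theorem~\ref{thm:sgdsgd1}). In that subsequent proof the paper follows precisely your outline---Jensen on $u(\bar{x})$, the subgradient inequality, and the telescoping estimate---but packages the telescoping step as an appeal to ``lemma~14.1 of \cite{shai}'' rather than writing out the expansion of $\|x^{k+1}-x^*\|^2$ explicitly as you do.
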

We now prove \ref{thm:sgdsgd1}, which we view as a nonlocal version of corollary 14.2 of \cite{shai} mentioned above.
\begin{proof}
Consider $u(\bar{x}) -u(x^{*}) = u(\frac{1}{K}\sum_{k=1}^{K}x^{k}) -u(x^{*})$\begin{equation}
\begin{split}
u(\bar{x}) -u(x^{*})&=u(\frac{1}{K}\sum_{k=1}^{K}x^{k}) -u(x^{*})\\
&\le \frac{1}{K}\sum_{k=1}^{K}u(x^{k}) -u(x^{*})\\
&= \frac{1}{K}(\sum_{k=1}^{K}u(x^{k}) -u(x^{*})).\\
\end{split}
\end{equation}
By the choice of $\epsilon-$subgradient $g^k \in \partial_{\epsilon}u(x^k)$ of $u$, we have for every $k$, that 
\begin{equation}u(x^k)-u(x^{*})\le  (x^k-x^*)^Tg^k+\epsilon,\end{equation}
which upon summing and dividing by $K$ yields \begin{equation}u(\bar{x}) -u(x^{*})\le \frac{1}{K}\sum_{k=1}^{K}(x^k-x^*)^Tg^k+\epsilon.\end{equation}
We appeal now to lemma 14.1 of \cite{shai} to conclude that $\frac{1}{K}\sum_{k=1}^{K}(x^k-x^*)^Tg^k\le \frac{BM}{\sqrt{K}}$, which results in the final estimate \begin{equation}u(\bar{x}) -u(x^{*})\le \frac{BM}{\sqrt{K}}+\epsilon.\end{equation}
Now, given an $\hat{\epsilon}>\epsilon$, it is clear that picking $K\ge \frac{B^2M^2}{(\hat{\epsilon}-\epsilon)^2}$ results in $u(\bar{x}) -u(x^{*})\le \hat{\epsilon}$.
\end{proof}
%%%%%%%%%%%%%%%%%SGDSGD2%%%%%%%%%%%%%%%%%%%%%%%%
\subsection{Proof of Theorem \ref{thm:sgdsgd2}}

\begin{proof}
The proof follows that of theorem 14.8 of \cite{shai}. Let $g^{1:k}$ denote the sequence of $\epsilon-$subgradients $g^1,\ldots,g^k$, and $\mathbb{E}_{g^{1:k}}[\cdot]$ indicates the expected value of the bracketed quantity over the draws $g^1,\ldots,g^k$. Now,  $u(\bar{x}) -u(x^{*})\le \frac{1}{K}(\sum_{k=1}^{K}u(x^{k}) -u(x^{*}))$, and therefore, $\mathbb{E}_{g^{1:K}}[u(\bar{x}) -u(x^{*})]\le \mathbb{E}_{g^{1:K}}[\frac{1}{K}(\sum_{k=1}^{K}u(x^{k}) -u(x^{*}))].$ By lemma 14.1 of \cite{shai}, we have $\mathbb{E}_{g^{1:K}}[\frac{1}{K}\sum_{k=1}^{K}(g^k)^T(x^k-x^{*})\le \frac{BM}{\sqrt{K}}.$

We show now that \begin{equation}\label{eq:A_asgdsgd}\mathbb{E}_{g^{1:K}}[\frac{1}{K}(\sum_{k=1}^{K}u(x^{k}) -u(x^{*}))]\le \mathbb{E}_{g^{1:K}}[\frac{1}{K}\sum_{k=1}^{K}(g^k)^T(x^k-x^{*})]+\epsilon,\end{equation}
which would establish the result.

Now,\begin{equation}
\begin{split}
\mathbb{E}_{g^{1:K}}[\frac{1}{K}\sum_{k=1}^{K}(g^k)^T(x^k-x^{*})] &=\frac{1}{K}\sum_{k=1}^{K}\mathbb{E}_{g^{1:K}}[(g^k)^T(x^k-x^{*})]\\
&= \frac{1}{K}\sum_{k=1}^{K}\mathbb{E}_{g^{1:k}}[(g^k)^T(x^k-x^{*})],\\
\end{split}
\end{equation}
where the first equality is by linearity of expectation, and the second equality is due to the fact that the quantity $(g^k)^T(x^k-x^{*})$ depends on iterates upto and including $k$ only, and not beyond $k$. Next, by the law of total expectation, we have \begin{equation}
\begin{split}
\mathbb{E}_{g^{1:k}}[(g^k)^T(x^k-x^{*})] &=\mathbb{E}_{g^{1:k-1}}[\mathbb{E}_{g^{1:k}}[(g^k)^T(x^k-x^{*})|g^{1:k-1}]]. \\
\end{split}
\end{equation}
Conditioned on $g^{1:k-1}$, the quantity $x^{k}$ is deterministic, and hence
\begin{equation}
\begin{split}
\mathbb{E}_{g^{1:k-1}}[\mathbb{E}_{g^{1:k}}[(g^k)^T(x^k-x^{*})|g^{1:k-1}]]&=\mathbb{E}_{g^{1:k-1}}[(\mathbb{E}_{g^{k}}[g^k|g^{1:k-1}])^T(x^k-x^{*})]. \\
\end{split}
\end{equation}
We now make the crucial observation that $\mathbb{E}_{g^{k}}[g^k|g^{1:k-1}] \in \partial_{\epsilon} u(x^k)$. Indeed, we have noted that $x^{k}$ is deterministic given $g^{1:k-1}$, so that $\mathbb{E}_{g^{k}}[g^k|g^{1:k-1}] = \mathbb{E}_{g^{k}}[\mathbb{E}_{g^{k}}[g^k|g^{1:k-1}]|x^k]$. However, since $x^k$ depends on $g^{1:k-1}$, the Doob-Dynkin lemma (see \cite{olav}) implies that the sigma algebra $\sigma(x^k)$ generated by $x^k$ is a subalgebra of $\sigma(g^{1:k-1})$, the sigma algebra generated by $g^{1:k-1}$. Therefore, by the tower property of conditional expectation, we have that $\mathbb{E}_{g^{k}}[\mathbb{E}_{g^{k}}[g^k|g^{1:k-1}]|x^k] = \mathbb{E}_{g^{k}}[g^k|x^k]$. This implies that $\mathbb{E}_{g^{k}}[g^k|g^{1:k-1}] \in \partial_{\epsilon} u(x^k)$ since by construction we know that $\mathbb{E}_{g^{k}}[g^k|x^k]\in \partial_{\epsilon} u(x^k)$. Thus, by the $\epsilon-$subgradient property of the $g^k$, we have that $\mathbb{E}_{g^{1:k-1}}[(\mathbb{E}_{g^{k}}[g^k|g^{1:k-1}])^T(x^k-x^{*})] \ge \mathbb{E}_{g^{1:k-1}}[u(x^k)-u(x^{*})]-\epsilon.$ 

Thus, we have shown that $\mathbb{E}_{g^{1:K}}[(g^k)^T(x^k-x^{*})]\ge \mathbb{E}_{g^{1:k-1}}[u(x^k)-u(x^{*})]-\epsilon=\mathbb{E}_{g^{1:K}}[u(x^k)-u(x^{*})]-\epsilon.$ Summing over $k=1,\ldots,K$ and dividing by $K$ we obtain equation \ref{eq:A_asgdsgd}, and thereby we have established the result.
\end{proof}
%%%%%%%%%%%THM HESS1 PROOF%%%%%%%%%%%%%%%%%%%%%%%
\subsection{Proof of Theorem \ref{thm:hess1a}}
We now provide the proof of theorem \ref{thm:hess1a}. The proof is a direct consequence of Theorem 1.1 of \cite{menspec}.
\begin{proof}
\textbf{Convergence of $(H_{n,m}^{1})_{i,j} \xrightarrow{m\rightarrow\infty, \, n\,(\text{fixed}) >N }(H_{n}^{3})_{i,j}$: }
Let $K\subset \Omega$ be compact. Assume that an $N$ has been chosen large enough so as to ensure that $u_{x_i}^{n} \in C^{1}(\bar{\Omega})$ for all $n>N$. Fix such an $n>N$. Then, applying Theorem 1.1 of \cite{menspec}, we obtain that $(u_{x_i}^{n})_{x_j}^{m}\rightarrow (u_{x_i}^{n})_{x_j}$  as $m\rightarrow \infty$, i.e., $(H_{n,m}^{1})_{i,j} \xrightarrow{m\rightarrow\infty, \, n\,(\text{fixed}) >N }(H_{n}^{3})_{i,j}$ locally on $K$. If $u\in C^2_{c}(\Omega),$ the uniform convergence of $(H_{n,m}^{1})_{i,j} \xrightarrow{m\rightarrow\infty, \, n\,(\text{fixed}) >N }(H_{n}^{3})_{i,j}$ follows from the analogous result of Theorem 1.1 of \cite{menspec}.

\textbf{Convergence of $(H_{n}^{2})_{i,j} \xrightarrow{n\rightarrow\infty}(H)_{i,j}$: }
The proof of this case is exactly like that of the previous case obtained by replacing $u_{x_i}^{n}$ with $u_{x_i}$. With the analogous set-up as before, we have $u_{x_i} \in C^1(\bar{\Omega})$. Thus, applying Theorem 1.1 of \cite{menspec}, we conclude that $(u_{x_i})_{x_j}^{n}\rightarrow (u_{x_i})_{x_j}$ locally uniformly. If $u\in C^2_{c}(\Omega),$ the uniform convergence of $(H_{n}^{2})_{i,j} \rightarrow (H)_{i,j}$ follows from the analogous result of Theorem 1.1 of \cite{menspec}.

\end{proof}

%%%%%%%%%%%THM HESS2 PROOF%%%%%%%%%%%%%%%%%%%%%%%
\subsection{Proof of Theorem \ref{thm:hess2a}}
We now provide the proof of theorem \ref{thm:hess2a}.
\begin{proof}
Let $v\in H^1_{0}(\Omega)$. We show that $((u^{n}_{x_i})_{x_j}-(u_{x_i})_{x_j},v)_{L^2}\rightarrow 0$ as $n\rightarrow \infty$.

Now, integrating by parts we obtain,
\[
((u^{n}_{x_i})_{x_j}-(u_{x_i})_{x_j},v)_{L^2} = -((u^{n}_{x_i})-(u_{x_i}),v_j)_{L^2},\]
where the boundary term is zero due to $v$ coming from $H^1_{0}$ and having zero trace. We can now estimate:
\begin{equation}
\begin{split}
|((u^{n}_{x_i})_{x_j}-(u_{x_i})_{x_j},v)_{L^2}|&=|((u^{n}_{x_i})-(u_{x_i}),v_j)_{L^2}|\\
&\le\|u^{n}_{x_i}-u_{x_i}\|_{L^2}\|v_j\|_{L^2}\\
&\le\|u^{n}_{x_i}-u_{x_i}\|_{L^2}\|v\|_{H^{1}_{0}},\\
\end{split}
\end{equation}
where the first inequality above is Cauchy-Schwarz, while the second follows from the definition of the $H^{1}_{0}$ norm. Now, since $u\in C^2(\bar{\Omega})$, in particular, we have that $u\in H^2(\Omega)$ and therefore $u_{x_i} \in H^1(\Omega)$. By \cite{menspec}, Theorem 1.1, we have that $\|u^{n}_{x_i}-u_{x_i}\|_{L^2}\rightarrow 0$ as $n\rightarrow \infty$ for $u_{x_i} \in H^1(\Omega)$. Therefore, given $\epsilon > 0$, there is an $N>0$ such that for all $n>N$, we have $\|u^{n}_{x_i}-u_{x_i}\|_{L^2} < \frac{\epsilon}{\|v\|}$. Thus, $|((u^{n}_{x_i})_{x_j}-(u_{x_i})_{x_j},v)_{L^2}| < \epsilon$ for $n>N$. Since $\epsilon$ is arbitrary, we have proved $((u^{n}_{x_i})_{x_j}-(u_{x_i})_{x_j},v)_{L^2}\rightarrow 0$ as $n\rightarrow \infty$.

The remainder of Theorem \ref{thm:hess2a} is a reorganization of the statements of Theorem \ref{thm:hess1a} along with what was just proved.
\end{proof}

%%%%%%%%%%%THM HESS3 PROOF%%%%%%%%%%%%%%%%%%%%%%%
\subsection{Proof of Theorem \ref{thm:hess3}}
The proof follows that of Theorem 1.1 of \cite{menspec}. We record here the following Lemma from \cite{menspec} that we will need in our proof as well.
%%%%%%%%%MENGESHA LEMMA%%%%%%%%%%%%%%%%%%%
\begin{lemma}(Lemma $3.1$ of \cite{menspec}) Let \[c_{n}^{i}(x) := \int_{\Omega}\frac{(x_i-y_i)^2}{\|x-y\|^2}\rho_{n}(x-y)dy.\] Then $Dc_{n}^{i}(x)\rightarrow 1$ pointwise as $n\rightarrow \infty$, and the convergence is uniform on compact sets.
\label{thm:menLemma}
\end{lemma}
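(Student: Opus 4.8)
The plan is to route everything through the \emph{known} localization $u^{n}_{x_i}\to u_{x_i}$ of Theorem $1.1$ of \cite{menspec}, by comparing $(H_{n,m}^{1})_{i,j}$ not with $c_j^m(H)_{i,j}$ directly but with the second-order object $(H_m^2)_{i,j}$ built from the \emph{true} first derivative $u_{x_i}$ at the same scale $m$. Concretely, I would split
\[
(H_{n,m}^{1})_{i,j}(x)-c_{j}^{m}(x)(H)_{i,j}(x)=\underbrace{\big[(H_{n,m}^{1})_{i,j}(x)-(H_{m}^{2})_{i,j}(x)\big]}_{\text{(A)}}+\underbrace{\big[(H_{m}^{2})_{i,j}(x)-c_{j}^{m}(x)(H)_{i,j}(x)\big]}_{\text{(B)}}.
\]
Term (A) is the heart of the estimate and is where the new hypothesis on $\rho_m$ is consumed; term (B) is exactly the Mengesha--Spector localization of the $C^1$ function $u_{x_i}$ and is controlled by Lemma \ref{thm:menLemma} and Theorem \ref{thm:hess1a}.

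For (A), set $w^{n}:=u^{n}_{x_i}-u_{x_i}$. Because $(H_{n,m}^1)_{i,j}$ and $(H_m^2)_{i,j}$ differ only in using $u^n_{x_i}$ versus $u_{x_i}$, the difference telescopes to a single nonlocal-gradient integral of $w^n$,
\[
(H_{n,m}^{1})_{i,j}(x)-(H_{m}^{2})_{i,j}(x)=D\int_{\Omega}\frac{w^{n}(x)-w^{n}(y)}{\|x-y\|}\frac{x_j-y_j}{\|x-y\|}\rho_m(x-y)\,dy .
\]
The crucial point is that Theorem $1.1$ of \cite{menspec} supplies only \emph{sup-norm} control of $w^n$ (there is no uniform Lipschitz or gradient bound on the nonlocal derivatives $u^n_{x_i}$ as $n$ varies), so the singularity-cancelling estimate $|w^n(x)-w^n(y)|\le L\|x-y\|$ is unavailable. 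I would therefore use the crude bound $|w^n(x)-w^n(y)|\le 2\|w^n\|_{L^\infty}$ together with $|x_j-y_j|\le\|x-y\|$, which leaves one undamped factor of $\|x-y\|^{-1}$. Substituting $z=x-y$ and using $\rho_m\ge0$ gives, uniformly in $x$,
\[
\big|(H_{n,m}^{1})_{i,j}(x)-(H_{m}^{2})_{i,j}(x)\big|\le 2D\,\|w^n\|_{L^\infty}\int_{\mathbb{R}^D}\frac{\rho_m(z)}{\|z\|}\,dz\le 2D\,\|w^n\|_{L^\infty}\,M(m),
\]
which is exactly where the added hypothesis $\int_{\mathbb{R}^D}\rho_m(x)/\|x\|\,dx<M(m)$ is indispensable. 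Since $u\in C^2(\bar{\Omega})$ forces $u_{x_i}\in C^1(\bar{\Omega})$, Theorem $1.1$ of \cite{menspec} makes $\|w^n\|_{L^\infty(V)}<\epsilon$ on every compact $V\subset\Omega$ for all $n>N$ (uniformly on all of $\Omega$ if $u\in C^2_c$), so (A) is bounded by $2D\epsilon M(m)$.

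For (B) I would run the Mengesha--Spector argument directly on $u_{x_i}\in C^1(\bar{\Omega})$: a first-order Taylor expansion of $u_{x_i}(x)-u_{x_i}(y)$ about $x$, radiality of $\rho_m$ to annihilate the off-diagonal coefficients $\int_\Omega\frac{(x_k-y_k)(x_j-y_j)}{\|x-y\|^2}\rho_m(x-y)\,dy$ for $k\neq j$, and Lemma \ref{thm:menLemma} to identify the surviving diagonal coefficient $c_j^m(x)$ (normalized so that $Dc_j^m\to1$) multiplying $(H)_{i,j}(x)$; the leftover Taylor remainder is precisely the localization error and vanishes as $m\to\infty$, which is the statement $(H_m^2)_{i,j}\to(H)_{i,j}$ of Theorem \ref{thm:hess1a}. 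Passing to the limit $m\to\infty$ therefore absorbs (B) and replaces $M(m)$ by $\lim_{m\to\infty}M(m)$, delivering $\|(H_{n,m}^{1})_{i,j}-c_j^m(H)_{i,j}\|_{L^\infty}\le 2D\epsilon\lim_{m\to\infty}M(m)$. When $M(m)\equiv M$ is constant, (A) is uniformly $O(\epsilon)$ and, combined with $(H_m^2)\to H$, yields local uniform convergence $H_{n,m}^1\to H$; when $M(m)\to\infty$ the estimate degrades as $M(m)$.

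The step I expect to be the genuine obstacle is term (A): having no Lipschitz control on $w^n$ forces the crude $2\|w^n\|_{L^\infty}$ bound, which cannot cancel the $\|x-y\|^{-1}$ singularity and hence leaves the integral $\int\rho_m(z)/\|z\|\,dz$ behind. This both explains why the extra integrability hypothesis on $\rho_m$ is unavoidable and why only a worst-case bound is obtained: for Gaussian and bump densities a scaling computation shows $M(m)\to\infty$, so the $L^\infty$ error grows with $m$ exactly as the theorem cautions.
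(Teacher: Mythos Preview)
Your proposal does not address the stated lemma at all. The lemma asserts that $Dc_{n}^{i}(x)\to 1$ pointwise (uniformly on compacta), where $c_{n}^{i}(x)=\int_{\Omega}\frac{(x_i-y_i)^2}{\|x-y\|^2}\rho_n(x-y)\,dy$. What you have written is instead a proof sketch for Theorem~\ref{thm:hess3}---the $L^{\infty}$ comparison of $(H_{n,m}^{1})_{i,j}$ with $c_{j}^{m}(x)(H)_{i,j}$---and you invoke Lemma~\ref{thm:menLemma} as a \emph{tool} in your term (B) rather than establishing it. Nothing in your argument shows why $Dc_{n}^{i}\to 1$.

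For the record, the paper itself does not prove this lemma either; it simply records it as Lemma~3.1 of \cite{menspec} before using it in the proof of Theorem~\ref{thm:hess3}. A direct proof would run as follows: change variables $z=x-y$ so that $Dc_{n}^{i}(x)=D\int_{x-\Omega}\frac{z_i^{2}}{\|z\|^{2}}\rho_n(z)\,dz$; by radiality of $\rho_n$ and the identity $\sum_{i}z_i^{2}=\|z\|^{2}$ one has $D\int_{\mathbb{R}^D}\frac{z_i^{2}}{\|z\|^{2}}\rho_n(z)\,dz=\int_{\mathbb{R}^D}\rho_n(z)\,dz=1$ for every $n$; and for $x$ in a compact $K\subset\Omega$ there is a fixed $\delta>0$ with $B_\delta(0)\subset x-\Omega$, so the discrepancy between integrating over $x-\Omega$ and over $\mathbb{R}^D$ is bounded by $\int_{\|z\|>\delta}\rho_n(z)\,dz\to 0$ uniformly in $x\in K$ by the third property in \eqref{eq:rho}.

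As an aside, your sketch \emph{for Theorem~\ref{thm:hess3}} is essentially correct and matches the paper's proof: your (A)/(B) decomposition is just a cleaner repackaging of the paper's single-integral manipulation, with the same crude bound $|w^{n}(x)-w^{n}(y)|\le 2\|w^{n}\|_{L^\infty}$ forcing the $\int\rho_m/\|z\|$ factor and hence the $M(m)$ hypothesis. But that is a different theorem from the one you were asked to prove.
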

%%%%%%%%%%%%%%%Proof%%%%%%%%%%%%%%%%%%%%%%%
We now provide the proof of theorem \ref{thm:hess3}.
\begin{proof}
For $x\in K$, with $K\subset \Omega$, $K$ being compact, we show that $(u_{x_i}^{n})_{x_j}^{m}\rightarrow (u_{x_i}^{n})_{x_j}$ uniformly as $m,n\rightarrow \infty$. Let \begin{equation}J_{i,j}^{n,m}=|D\int_{\Omega} \frac{u_{x_i}^{n}(x)-u_{x_i}^{n}(y)}{\|x-y\|}\frac{x_{j}-y_{j}}{\|x-y\|}\rho_{m}(x-y)dy-Dc_{j}^{m}(x)(u_{x_i})_{x_j}|,\end{equation}
where we have used the fact that $(u_{x_i})_{x_j}$ exists due to $u\in C^2(\bar{\Omega})$.  Now, since $Dc_{j}^{m}(x)\rightarrow 1$ uniformly  as $m\rightarrow \infty$ by lemma \ref{thm:menLemma}, we have that $Dc_{j}^{m}(x)(u_{x_i})_{x_j} \rightarrow (u_{x_i})_{x_j}, \, m\rightarrow \infty$ uniformly.

We show that there exists an $N>0$ such that for $n>N$ fixed, we have $J_{i,j}^{n,m}\rightarrow 0$ uniformly for $x\in K$ as $m\rightarrow \infty$, which implies that $(u_{x_i}^{n})_{x_j}^{m}\rightarrow (u_{x_i})_{x_j}$ uniformly as $m\rightarrow \infty$, i.e., $(H_{n,m}^{1})_{i,j} \xrightarrow{m\rightarrow\infty, \, n\,(\text{fixed}) >N }(H)_{i,j}$.

Now, we have by definition \[c_{j}^{m}(x) := \int_{\Omega}\frac{(x_j-y_j)^2}{\|x-y\|^2}\rho_{m}(x-y)dy,\]
so that \begin{equation}
\begin{split}
J_{i,j}^{n,m}&=|D(\int_{\Omega} \frac{u_{x_i}^{n}(x)-u_{x_i}^{n}(y)}{\|x-y\|}\frac{x_{j}-y_{j}}{\|x-y\|}\rho_{m}(x-y)dy-\int_{\Omega}\frac{(x_j-y_j)^2}{\|x-y\|^2}(u_{x_i})_{x_j}\rho_{m}(x-y)dy)| \\
 & = |D(\int_{\Omega} \frac{u_{x_i}^{n}(x)-u_{x_i}^{n}(y)-(x_j-y_j)(u_{x_i})_{x_j}}{\|x-y\|}\frac{x_{j}-y_{j}}{\|x-y\|}\rho_{m}(x-y)dy)|. \\
\end{split}\label{eq:A_ahess3eq1}
\end{equation}
Now, by Theorem 1.1 of \cite{menspec}, we have that for $x\in K$, $u_{x_i}^{n}\rightarrow u_{x_i}$ uniformly. Thus, given $\epsilon>0$, there exists an $N_\epsilon>0$ independent of $x\in K$ such that for all $n>N_\epsilon$, \[\|u_{x_i}^{n}-u_{x_i}\|_{L^\infty(\Omega)}<\epsilon,\]
or, for any $x,y\in K$, $|u_{x_i}^{n}(x)-u_{x_i}(x)|<\epsilon$ and $|u_{x_i}(y)-u_{x_i}^{n}(y)|<\epsilon$, which upon combining together, result in \begin{equation}-2\epsilon+ u_{x_i}(x)-u_{x_i}(y)<u_{x_i}^{n}(x)-u_{x_i}^{n}(y)<2\epsilon+ u_{x_i}(x)-u_{x_i}(y).\label{eq:A_ahess3eq2}\end{equation}Putting \ref{eq:A_ahess3eq2} into \ref{eq:A_ahess3eq1}, we obtain 

\begin{equation}
\begin{split}
J_{i,j}^{n,m}&\le|D(\int_{\Omega} \frac{2\epsilon + u_{x_i}(x)-u_{x_i}(y)-(x_j-y_j)(u_{x_i})_{x_j}}{\|x-y\|}\frac{x_{j}-y_{j}}{\|x-y\|}\rho_{m}(x-y)dy)|\\&\le\underbrace{|D(\int_{\Omega} \frac{2\epsilon(x_{j}-y_{j})}{\|x-y\|^2}\rho_{m}(x-y)dy)|}_{(1)}\\&+\underbrace{|D(\int_{\Omega} \frac{u_{x_i}(x)-u_{x_i}(y)-(x_j-y_j)(u_{x_i})_{x_j}}{\|x-y\|}\frac{x_{j}-y_{j}}{\|x-y\|}\rho_{m}(x-y)dy)|}_{(2)}.
\end{split}\label{eq:A_ahess3eq3}
\end{equation}
Term $(1)$ in equation \ref{eq:A_ahess3eq3} can be estimated as:
\begin{equation} |D(\int_{\Omega} \frac{2\epsilon(x_{j}-y_{j})}{\|x-y\|^2}\rho_{m}(x-y)dy)|\le 2D\epsilon \int_{\Omega}\frac{\rho_m(x-y)}{\|x-y\|}dy\le2DM\epsilon,\label{eq:A_aestim}\end{equation}
since, by the assumption on $\rho_m$, we have $\int_{\Omega}\frac{\rho_m(x-y)}{\|x-y\|}dy\le\int_{\mathbb{R}^D}\frac{\rho_m(x-y)}{\|x-y\|}dy<M(m)$.
We now focus on term $(2)$ in equation \ref{eq:A_ahess3eq3}.

Since by assumption $(u_{x_i})\in C^{1}(\bar{\Omega})$, i.e., $(u_{x_i})_{x_j} \in C^{0}(\bar{\Omega})$, we can find, given an $\epsilon_2 >0$, a $\delta>0$ small enough so that $\|x-y\|<\delta$ implies that $\|(u_{x_i}^{n})_{x_j}(x)-(u_{x_i}^{n})_{x_j}(y)\|<\epsilon_2$.

Assume an $\epsilon_2$ is given and the corresponding $\delta$ has been found. Thus, we can write 
\begin{equation}
\begin{split}
&|D(\int_{\Omega} \frac{u_{x_i}(x)-u_{x_i}(y)-(x_j-y_j)(u_{x_i})_{x_j}}{\|x-y\|}\frac{x_{j}-y_{j}}{\|x-y\|}\rho_{m}(x-y)dy)|\\&=|D(\int_{B_\delta(x)} \frac{u_{x_i}(x)-u_{x_i}(y)-(x_j-y_j)(u_{x_i})_{x_j}}{\|x-y\|}\frac{x_{j}-y_{j}}{\|x-y\|}\rho_{m}(x-y)dy\\&+\int_{\|x-y\|>\delta} \frac{u_{x_i}(x)-u_{x_i}(y)-(x_j-y_j)(u_{x_i})_{x_j}}{\|x-y\|}\frac{x_{j}-y_{j}}{\|x-y\|}\rho_{m}(x-y))| \\
 & \le |D(\int_{B_\delta(x)} \frac{u_{x_i}(x)-u_{x_i}(y)-(x_j-y_j)(u_{x_i})_{x_j}}{\|x-y\|}\frac{x_{j}-y_{j}}{\|x-y\|}\rho_{m}(x-y))dy|\\&+\frac{2D}{\delta}(\|u_{x_i}\|_{L^{\infty}}+\|\nabla u_{x_i}\|_{L^{\infty}})\int_{\|x-y\|>\delta} \rho_{m}(x-y)dy. \\
\end{split}\label{eq:A_ahess3eq4}
\end{equation}
By the mean value theorem, $\forall y \in B_{\delta}(x),\,\exists\eta(x,y)\in B_{\delta}(x)$ such that \[u_{x_i}(x)-u_{x_i}(y) = (u_{x_i})_{x_j}(\eta(x,y))\cdot(x_j-y_j).\]
Putting in this equality into equation \ref{eq:A_ahess3eq4}, we obtain:
\begin{equation}
\begin{split}
&|D(\int_{B_\delta(x)} \frac{u_{x_i}(x)-u_{x_i}(y)-(x_j-y_j)(u_{x_i})_{x_j}}{\|x-y\|}\frac{x_{j}-y_{j}}{\|x-y\|}\rho_{m}(x-y))dy| \\& \le |D(\int_{B_\delta(x)} \frac{(u_{x_i})_{x_j}(\eta(x,y))-(u_{x_i})_{x_j}}{\|x-y\|}\frac{(x_{j}-y_{j})^2}{\|x-y\|}\rho_{m}(x-y))dy|\\& \le |D\epsilon_2\int_{B_\delta(x)}\rho_{m}(x-y)dy|.\\
\end{split}
\end{equation}
By the assumption on the radial densities $\rho_{m}$, we have that $\int_{B_\delta(x)}\rho_{m}(x-y)dy<1$ and $\int_{\|x-y\|>\delta} \rho_{m}(x-y)dy \rightarrow 0,\, m\rightarrow 0$. Putting equations \ref{eq:A_ahess3eq3}, \ref{eq:A_aestim}, and \ref{eq:A_ahess3eq4} together, we obtain $\lim_{m\rightarrow \infty} \sup_{x\in K} J_{i,j}^{n,m} \le 2D \epsilon \lim_{m\rightarrow \infty}M(m)+D\epsilon_2$ for $n>N_\epsilon$. As $\epsilon,\epsilon_2$ were arbitrary, we obtain the result that the $L^{\infty}$ error of $\|(H_{n,m}^{1})_{i,j} - H_{i,j}\|_{L^{\infty}(K)}$ as $m\rightarrow \infty$ is bounded by $2D\epsilon\lim_{m\rightarrow \infty}M(m)$.

If $u\in C_{c}^{2}(\Omega)$, and $M(m)$ is independent of $m$, we can conclude uniform convergence on $\Omega$ using an argument similar to that of Theorem 1.1 of \cite{menspec} adapted to the current situation. Indeed, if $u\in C^{2}_{c}(\Omega)$ then $\text{supp}(u)$ is a proper closed subset of $\Omega$. Thus, the distance $d:=dist(\partial \Omega,\text{supp}(u))$ between $\partial \Omega$ and $\text{supp}(u)$ is strictly positive. If \[\Omega_{\frac{d}{2}}:=\{x \in \Omega:dist(\{x\},\partial \Omega)>\frac{d}{2}\},\]
then $\sup_{x\in \Omega_{\frac{d}{2}}}J^{n,m}_{i,j}\rightarrow 0, \, m\rightarrow \infty$. For points $x\in \Omega \cap \Omega_{\frac{d}{2}}^{c}$, we have \[J^{n,m}_{i,j}\le \frac{2D}{d}(\|u\|_{L^{\infty}}+\|\nabla u\|_{L^{\infty}})\int_{\|z\|>\frac{d}{2}}\rho_{m}(z)dz \rightarrow 0, \, m\rightarrow \infty.\]
\end{proof}
%%%%%%%%%%%%%%%%%%%%%Newton 1proof
\subsection{Proof of Theorem \ref{thm:newton1}}
\begin{proof}
By theorem 1.1 of \cite{menspec}, we have $\nabla_n u \rightarrow \nabla u$ locally uniformly, and extending $u$ smoothly by zero outside of $\Omega$, we have uniform convergence on all of $\mathbb{R}^D$. By theorem 1.4 of \cite{spechess}, we have $H_n u \rightarrow H u$ uniformly on $\mathbb{R}^{D\times D}$. By assumption on $(Hu(x))^{-1}$ being uniformly continuous on $U$, it follows that $(H_n u(x))^{-1}$ converges uniformly to $(Hu(x))^{-1}$ on $U$. Therefore, $(H_{n}u)^{-1}\nabla_n u$ converges uniformly to $(H u)^{-1}\nabla u$ on $U$. We now prove the result by induction. 
\paragraph{Base case $k=0$: } We have:
\begin{equation}
\begin{cases}
   x^1&=x^{0}-\beta^0 (Hu^0)^{-1} \nabla u^0,\\
     x^{1,n}&=x^{0,n}-\beta^0 (H_n u^{0,n})^{-1} \nabla_n u^{0,n},\\
\end{cases}
\end{equation}
so that \[\|x^1-x^{1,n}\|=\beta^0\|(H_{n}u^{0,n})^{-1}\nabla_n u^{0,n}-(H u^{0})^{-1}\nabla u^{0}\|.\]
Thus, for $x^{0,n}=x^{0} \in U$, we have, given an $\epsilon>0$, an $N>0$ such that $\|(H_{n}u^{0,n})^{-1}\nabla_n u^{0,n}-(H u^{0})^{-1}\nabla u^{0}\|<\frac{\epsilon}{\beta^0}$ for all $n>N$ We can therefore conclude that $\|x^1-x^{1,n}\|<\epsilon$ for $n>N$ and the base case is complete.
\paragraph{Induction case $k=K$: } Assume that the result is true for $k=K-1$, and we proceed to prove the induction step $k=K$. Let $\epsilon>0$ be given. Consider \begin{equation}x^{K+1}-x^{K+1,n}=(x^{K}-x^{K,n})+\beta^K((H_n u^{K,n})^{-1}\nabla_n u^{K,n}-(H u^{K})^{-1} \nabla u^{K}).\end{equation} We have \begin{equation}\label{eq:A_anewton1}\|x^{K+1}-x^{K+1,n}\|\le \underbrace{\|x^{K}-x^{K,n}\|}_{(1)}+\beta^K\underbrace{\|(H_n u^{K,n})^{-1}\nabla_n u^{K,n}-(H u^{K})^{-1} \nabla u^{K}\|}_{(2)}.\end{equation}
By the induction hypothesis, there is an $N_1>0$ such that for $n>N_1$ we have $\|x^{K}-x^{K,n}\|<\frac{\epsilon}{2}.$ We now consider term $(2)$ of equation \ref{eq:A_anewton1}. We write:
\begin{equation}\label{eq:A_anewton2}
\begin{split}
\|(H_n u^{K,n})^{-1}\nabla_n u^{K,n}-(H u^{K})^{-1} \nabla u^{K}\|&=\underbrace{\|(H_n u^{K,n})^{-1}\nabla_n u^{K,n}-(H u^{K,n})^{-1} \nabla u^{K,n}\|}_{(1)}\\
&+\underbrace{\|(H u^{K,n})^{-1} \nabla u^{K,n}-(H u^{K})^{-1} \nabla u^{K}\|}_{(2)}.\\
\end{split}
\end{equation}
Since $(H_n u)^{-1}\nabla_n u\rightarrow (H u)^{-1} \nabla u$ uniformly on $U$, we have that there exists an $N_2>0$ such that $\|(H_n u^{K,n})^{-1}\nabla_n u^{K,n}-(H u^{K,n})^{-1} \nabla u^{K,n}\|<\frac{\epsilon}{4\beta^K}$ for $n>N_2$. Next, we use the assumption that $f=(H u)^{-1} \nabla u \in C^{1}_{c}(U)$. This implies, by the mean value theorem that $\|(H u^{K,n})^{-1} \nabla u^{K,n}-(H u^{K})^{-1} \nabla u^{K}\|\le C\|x^{K,n}-x^{K}\|$ where $C=\|\nabla f\|_{L^{\infty}}<\infty.$ Therefore, there exists an $N_3>0$ such that for $n>N_3$, we have $\|x^{K,n}-x^{K}\|<\frac{\epsilon}{4C\beta^K}$. Putting these estimates together into equation \ref{eq:A_anewton2} and \ref{eq:A_anewton1}, we get $\|x^{K+1}-x^{K+1,n}\|<\epsilon$ for $n>N=\max\{N_1,N_2,N_3\}$. The induction step is proved.
\end{proof}
\vskip 0.2in

\end{document}